
\documentclass[11pt]{article}

\usepackage{amsthm,amsfonts,amssymb,amsmath,color}
\usepackage{indentfirst}
\allowdisplaybreaks[4]

\usepackage{hyperref}
\hypersetup{
  colorlinks = true, 
  urlcolor = blue, 
  linkcolor = blue, 
  citecolor = blue 
}

\hyphenpenalty=5000
\tolerance=1000

\numberwithin{equation}{section}

\renewcommand\d{\partial}
\renewcommand\a{\alpha}

\newcommand\s{\sigma}

\newcommand\R{\mathbb R}

\newcommand\Z{\mathbb Z}

\def\de{\delta}

\def\O{\Omega}
\def\th{\theta}

\def\l{\lambda}

\def\e{\varepsilon}

\def\weak{\rightharpoonup}


\newcommand\br{\begin{rem}}
\newcommand\er{\end{rem}}
\newcommand\bp{\begin{pmatrix}}
\newcommand\ep{\end{pmatrix}}
\newcommand\be{\begin{equation}}
\newcommand\ee{\end{equation}}
\newcommand\ba{\begin{equation}\begin{aligned}}
\newcommand\ea{\end{aligned}\end{equation}}

\newcommand\nn{\nonumber}

\newcommand{\tc}{\color{red}}
\let \tc \relax


\setlength\topmargin{0pt}
\addtolength\topmargin{-\headheight}
\addtolength\topmargin{-\headsep}
\setlength{\evensidemargin}{0pt}
\setlength{\oddsidemargin}{0pt}
\setlength\textwidth{\paperwidth}
\addtolength\textwidth{-2in}
\setlength\textheight{\paperheight}
\addtolength\textheight{-2in}
\usepackage{layout}



\newcommand{\RR}{{\mathbb R}}

\newcommand{\Id}{{\rm Id }}

\newcommand{\uu}{{\vc u}}
\newcommand{\vv}{{\vc v}}
\newcommand{\ff}{{\vc f}}

\newcommand{\vu}{\vc{u}}

\newcommand{\vc}[1]{{\bf #1}}

\newcommand{\dx}{\, {\rm d} {x}}
\newcommand{\dt}{\, {\rm d} t }

\newcommand{\vU}{\vc{U}}


\newcommand{\dive}{{\rm div\,}}

\newcommand{\eps}{\e}

\newtheorem{defi}{Definition}[section]
\newtheorem{theorem}[defi]{Theorem}
\newtheorem{proposition}[defi]{Proposition}
\newtheorem{lemma}[defi]{Lemma}

\newtheorem{remark}[defi]{Remark}

\numberwithin{equation}{section}

\definecolor{darkgreen}{rgb}{0,0.5,0}

\begin{document}

\title{Qualitative/quantitative homogenization of some non-Newtonian flows in perforated domains}

\author{
Richard M. H\"ofer \footnote{Faculty of Mathematics, University Regensburg, Universit\"atsstraße 31, 93053 Regensburg, Germany, richard.hoefer@mathematik.uni-regensburg.de}
\and
Yong Lu \footnote{School of Mathematics, Nanjing University, Nanjing 210093, China, luyong@nju.edu.cn}
\and
Florian Oschmann \footnote{Mathematical Institute of the Czech Academy of Sciences, \v{Z}itn\'a 609/25, 140 00 Praha, Czech republic, oschmann@math.cas.cz}
}
\date{}

\maketitle

\begin{abstract}

In this paper, we consider the homogenization of stationary and evolutionary  incompressible viscous non-Newtonian flows of Carreau-Yasuda type in domains perforated with a large number of periodically distributed small holes in $\R^{3}$,  where the mutual distance between the holes is measured by a small parameter $\e>0$ and the size of the holes is $\e^{\a}$ with {\tc $\a \in (1, 3)$}. The Darcy's law is recovered in the limit, thus generalizing the results from [\url{https://doi.org/10.1016/0362-546X(94)00285-P}] and  [\url{https://doi.org/10.1016/j.jde.2024.08.021}] for $\alpha=1$. Instead of using their restriction operator to derive the estimates of the pressure extension by duality, we use the Bogovski\u{\i} type operator in perforated domains (constructed in [\url{https://doi.org/10.1051/cocv/2016016}]) to deduce the uniform estimates of the pressure directly.  Moreover, quantitative convergence rates are given. 

\end{abstract}

\renewcommand{\refname}{References}

\tableofcontents


\section{Introduction}

In this paper, we consider the homogenization of stationary and instationary incompressible viscous non-Newtonian flows in three dimensional perforated domains. Starting with the  steady case, we focus on the {\tc Navier-Stokes equations (NSE)} in the perforated domain $\O_\e$:
\begin{equation}\label{NonNew-0}
\begin{cases}
-{\rm div}\,\big(\eta_r(D\uu_\e )D\uu_\e  \big)+{\tc \e^{\lambda'}} \dive (\uu_\e \otimes \uu_\e)  +\nabla p_\e=\vc{f} & {\rm in}\ \O_\e, \\
{\rm div} \,\uu_\e =0 & {\rm in}\  \O_\e,\\
\uu_\e =0 & {\rm on}\ \d\O_\e.
\end{cases}
\end{equation}

Here, $\uu_\e $ is the fluid's velocity, $\nabla \uu_\e $ is the gradient velocity tensor, $D\uu_\e =\frac{1}{2}(\nabla \uu_\e +\nabla^{\rm T}\uu_\e )$ denotes the rate-of-strain tensor, $p_\e$ denotes the fluid's pressure, and $\vc{f} \in L^{2}(\O;\R^{3})$ is the density of the external force, which is assumed to be independent of $\e$ for simplicity. The case $\vc{f}_{\e} \to \vc{f}$ strongly in $L^{2}(\O)$ can be dealt with in the same manner. {\tc The parameter $\e^{\lambda'}$ (with $\l' \in \R$ possibly negative) accounts for the strength of inertia.} 

\medskip

{\tc The non-Newtonian viscosity coefficient $\eta_r(D\uu_\e )$ with $r>1$ is assumed to satisfy the following \emph{$r$-structure} properties including \emph{continuity and strict monotonicity (coercivity)}: there exists a constant $C>0$, such that for any $A,B \in \R_{sym}^{3\times 3}$ and any $\beta>0$, there holds
\ba\label{monG}
& \eta_r: \R_{sym}^{3\times 3} \to [0, \infty)  \ \mbox{is continuous};\\
 &[\eta_r(\beta A)A - \eta_r(\beta B)B]: (A-B) \geq C^{-1} |A-B|^2 + C^{-1} \beta^{r-2}|A-B|^{r} \vc{1}_{r > 2}.
\ea
Here $\R_{sym}^{3\times 3}$ denotes the set of all real $3\times 3$ symmetric matrices, and $\vc{1}_{r > 2}$ is the characteristic function of the set $\{r>2\}$.

We further assume that $\eta_r(D\uu_\e )$ takes the form
\begin{align} \label{def:eta_r}
\eta_r(D \uu_\e) = \eta_0 + g_r(|D \uu_\e|),  \ \eta_0 >0,
\end{align}
where $g_r : [0, \infty) \to \R$ is a continuous function satisfying the \emph{growth condition}
\ba \label{growthG}
|g_r(s)| \leq C s \vc{1}_{s \leq 1} + C s^{\max\{r-2, 0\}} \vc{1}_{s \geq 1}, \qquad \mbox{for some $C > 0$ and all $s\geq 0$}.
\ea

\medskip

A typical example of stress tensor under consideration is the {\tc Carreau-Yasuda} law:
$$\eta_r(D\uu_\e )=(\eta_0-\eta_\infty)(1+\kappa_0 | D\uu_\e |^a)^{\frac{r-2}{a}}+\eta_\infty,\qquad \eta_0>\eta_\infty>0, \ \kappa_0>0, \  r>1, \  a\geq 1,$$
where {\tc$\eta_0/2$} is the zero-shear-rate viscosity, $\kappa_0>0$ is a time constant, and {\tc $(r-2)$} is a dimensionless constant describing the slope in the {\em power law region} of $\log \eta_r$
versus $\log (|D(\uu_{\e})|)$. {\tc The Carreau-Yasuda law is a simple phenomenological law for non-Newtonian flows. It is shear thinning for $r < 2$ and shear thickening for $r>2$.}

Clearly such Carreau-Yasuda law satisfies the properties stated in \eqref{monG} (see for example \cite[Chapter~5]{MNRR}).  Moreover, we may write
\begin{align*}
(\eta_0-\eta_\infty)(1+\kappa_0 | D\uu_\e |^a)^{\frac{r-2}{a}}+\eta_\infty = \eta_0 + (\eta_0 - \eta_\infty) [ (1+\kappa_0 | D\uu_\e |^a)^{\frac{r-2}{a}} - 1 ],
\end{align*}
and the function  $g_r(s) := (1+s^a)^\frac{r-2}{a}-1$ satisfies \eqref{growthG}. 

}



\medskip

The perforated domain $\O_{\e}$ under consideration is described as follows. Let $\O \subset \R^{3}$ be {\tc either the torus $\mathbb{T}^3$,} or a bounded domain of class {\tc $C^{3,\mu}, 0<\mu<1$}. The holes in $\O$ are denoted by $\mathcal{T}_{\e,k}$ satisfying
\begin{equation*}
 \mathcal{T}_{\e,k}=\e x_k+\e^{\a} \overline{\mathcal{T}} \Subset \e Q_k,
\end{equation*}
where $0<\e\ll 1$ is the small perforation parameter used to describe the mutual distance between the holes,  $Q_k=(-\frac{1}{2},\frac{1}{2})^3+k$ is the cube with center $x_k=x_0+k$, where $x_0\in(-\frac{1}{2},\frac{1}{2})^3,\; k\in {\mathbb{Z}}^3$, and $\alpha > 1$. Moreover, $\mathcal{T} \subset \R^{3}$ is a model hole which is assumed to be a simply connected $C^{2,\mu}$  domain contained in $Q_0$. Without loss of generality, we may assume $ 0 \in \mathcal{T} \subset B(0, \frac 18)$ such that, for $\e>0$ small enough,
\begin{align}\label{distHole}
{\rm dist} \ (\mathcal{T}_{\e, k}, \d (\e Q_k)) \geq \frac{\e}{8} > \e^\a.
\end{align}
We will always assume that $\e$ is small enough to satisfy \eqref{distHole}. The perforation parameter $\e>0$ is used to measure the mutual distance , $\e x_k=\e x_0+\e k$ are the locations of the holes, and $\e^{\a}$ is used to measure the size of the holes. In this paper, we are focusing on the case {\tc $1<\a<3$}.  

The perforated domain $\O_\e$ is then defined as
\begin{equation}\label{Omega-e}
  \O_\e=\O\backslash\bigcup_{k\in K_\e} \mathcal{T}_{\e,k},\qquad {\rm where} \ K_\e=\{k\in {\mathbb{Z}}^3:\e\overline{Q}_{k}\subset\O\}.
\end{equation}

\subsection{Previous results}

The study of homogenization problems in fluid mechanics has gained a lot of interest. A full overview of the literature, even related to viscous fluids with Dirichlet boundary conditions in perforated domains, is beyond our scope. 
Tartar \cite{Tar80}  considered the homogenization of steady Stokes equations in porous media {\tc ($\alpha=1$)} and derived Darcy's law. Allaire \cite{All90-1,All90-2} systematically studied the homogenization of steady Stokes and Navier-Stokes equations {\tc in any spatial dimension $d \geq 2$} and showed that the limit systems are determined by the ratio $\sigma_\e$ between the size and the mutual distance of the holes:
$$\sigma_\e=\bigg(\frac{\e^d}{a_\e^{d-2}}\bigg)^\frac{1}{2},\qquad d\geq3;\qquad\sigma_\e=\e\bigg|{\rm log}\frac{a_\e}{\e}\bigg|^\frac{1}{2},\qquad d=2,$$
where $\e$ and $a_\e$ are used to measure the mutual distance of the holes and the size of the holes, respectively. Particularly, if $\lim_{\e\rightarrow 0}\sigma_\e=0$ corresponding to the case of large holes, the homogenized system is the Darcy's law; if $\lim_{\e\rightarrow 0}\sigma_\e=\infty$ corresponding to the case of small holes, there arise the same Stokes equations in the homogeneous domain; if $\lim_{\e\rightarrow 0}\sigma_\e=\sigma_\ast\in (0,+\infty)$ corresponding to the case of critically sized holes, the homogenized equations are governed by the Brinkman law -- a combination of the Darcy's law and the original Stokes equations. The same results were shown in \cite{Lu20} by employing a generalized cell problem inspired by Tartar \cite{Tar80}.

{\tc Quantitative results for the homogenization in the Darcy regime have been shown in \cite{All90-1} for $\Omega = \mathbb T^d$. The case with boundaries has been studied more recently in \cite{BalaziAllaireOmnes24, MPM96, She22} for $\alpha = 1$ and in \cite{JLP25} for $\alpha > 1$.}

Later, the homogenization study is extended to more complicated models describing fluid flows: Mikeli{\'c} \cite{Mik91} for the nonstationary incompressible Navier-Stokes equations, Masmoudi \cite{Mas02} for the compressible Navier-Stokes equations, Feireisl, Novotn{\'y}, and Takahashi \cite{FNT10} for the complete Navier-Stokes-Fourier equations. In all these studies, only the case where the size of the holes is proportional to the mutual distance of the holes {\tc (i.e., $\alpha = 1$)} is considered and  Darcy's law is recovered in the limit.

Recently, cases with different sizes of holes are studied. Feireisl, Namlyeyeva, and Ne{\v c}asov{\'a} \cite{FNN16} studied the case with critical size of holes for the incompressible Navier-Stokes equations and they derived  the Brinkman equation;  Yang and the second author \cite{YL23} studied the homogenization of evolutionary incompressible Navier-Stokes system with large and small sizes of holes. {\tc The first author showed quantitative convergence result for the corresponding problem with vanishing viscosity in \cite{Hoefer23}.} In \cite{DFL17,FL15,LS18}, with collaborators the second author considered the case of small holes for the compressible Navier-Stokes equations and the homogenized equations remain unchanged. With collaborators, the third author also considered the case of small holes for the unsteady compressible Navier-Stokes equations in \cite{OP23} for three dimensional domains, in \cite{NO23} for two dimensional domains, and  in \cite{BO23}  for the case of randomly perforated domains.  The first author, Kowalczyk, and Schwarzacher \cite{HKS21} studied the case of large holes for the compressible Navier-Stokes equations at low Mach number and derived the Darcy's law;  the study in \cite{HKS21}  was extended to the case with critical size of holes  in \cite{BO22} and they derived the incompressible Navier-Stokes equations with Brinkman term. A similar approach for tiny holes and the full Navier-Stokes-Fourier system was given in \cite{BasaricChaud}. A more general setting has been treated in \cite{BFO23} where the authors considered the case of unsteady compressible Navier-Stokes equations at low Mach number under the assumption $\O_{\e}\to \O$ in the sense of Mosco's convergence and they derived the incompressible Navier-Stokes equations. 

There are not many mathematical studies concerning the homogenization of non-Newtonian flows. Mikeli\'{c} and Bourgeat \cite{BM96} considered the stationary case of Carreau type flows under the assumption $a_{\e}\sim \e$ and derived the Darcy's law. Mikeli\'{c} summarized some studies of homogenization of  stationary non-Newtonian flows in Chapter 4 of \cite{Hor97}.  Under the assumption $a_{\e}\sim \e$, the convergence from the evolutionary version of \eqref{NonNew} to the Darcy's law is shown in \cite{LQ23}. {\tc Recently, the Carreau model in a thin domain of thickness $\e$ was studied in \cite{ABSG22} for $\a=1$ and $1<r<2$.} To the best of the authors' knowledge, {\tc the only works of rigorous analytical results for the homogenization of a pure power law fluid in a thin domain provided $\a>1$ are the very recent results of \cite{ASG21, ASG25}. In the present} paper, we  show that the Darcy's law can be recovered from {\tc an even larger class of viscosities than} the Carreau model by homogenization provided $\a \in (1, \frac 32)$, {\tc for all $r>1$, and for all $1 < \a < 3$ if $r=2$}.\\

\subsection{Rescaling of the primitive system and summary of the main results}

Anticipating that the fluid's velocity is (in some sense) small of order $\e^{3-\a}$, we may rescale the system \eqref{NonNew-0} as\footnote{Strictly speaking, we define new functions $\hat \uu_\e = \e^{\a-3} \uu_\e$, $\hat p_\e = p_\e$, $\hat{\vc{f}} = \vc{f}$, and then drop the hats.}
\begin{equation}\label{NonNew}
\begin{cases}
-\e^{3-\a} \dive \big(\eta_r(\e^{3-\a} D\uu_\e )D\uu_\e  \big) + \e^\l\dive (\uu_\e \otimes \uu_\e) +\nabla p_\e=\vc{f} & {\rm in}\ \O_\e, \\
{\rm div} \,\uu_\e =0 & {\rm in}\  \O_\e,\\
\uu_\e =0 & {\rm on}\ \d\O_\e,
\end{cases}
\end{equation}
with $\l = {\tc \lambda' +} 2(3-\a) > 0$.
{\tc This scaling then coincides with setting the Reynolds, Euler, and Froude number equal to ${\rm Re} = \e^{\l + \a - 3}$, ${\rm Eu} = \e^{-\l} $, ${\rm Fr} = \e^\frac{\l}{2}$, respectively. Moreover, the Euler number represents physically the ratio of pressure forces and inertial forces. The scaling ${\rm Eu} = \e^{-\l}$ hence corresponds to the fact that inertial forces can be neglected in the limit, giving heuristically rise to a linear limiting system.\\

 Roughly speaking, our main results can be stated as follows: If  $1 < \a < 3/2$ and $\l > \a$, then the limiting system as $\eps \to 0$ is Darcy's law
\begin{align} \label{Darcy.intro}
\begin{cases}
\frac{1}{2}\eta_0 M_0 \uu =\vc{f}-\nabla p & \text{in }  \O,\\
\dive \uu = 0 & \text{in } \O,\\
\uu \cdot \vc{n} = 0 & \text{on }  \d \O,
\end{cases}
\end{align}
where $M_{0}$ is the permeability tensor which is a positive definite matrix.
We show convergence results both for the steady and the evolutionary version of \eqref{NonNew}. In both cases, our results are threefold. We show
\begin{itemize}
    \item qualitative convergence results for the fluid velocity field assuming a weak solution of the limit equation;
    \item corresponding quantitative convergence results provided the solution to the limit equation is sufficiently regular; the rates are different for $\Omega=\mathbb T^3$ and bounded domains due to the loss of the no slip boundary condition in the limit;
    \item corresponding qualitative and quantitative estimates for the fluid pressure.
\end{itemize}
The convergence rates appear to be new even in the Newtonian case $g_r = 0$ and coincide with those that have been recently obtained for the steady Stokes system in \cite{JLP25}.

We might compare system \eqref{NonNew} to the results of \cite{BM96}, where they considered a Carreau law with $\a=1$. Although claiming that the Reynolds number ${\rm Re} = \e^{-\gamma}$, after a complete rescaling of the equations such that the velocity $v_\e$ in \cite{BM96} is of order one, one indeed finds that their fully rescaled momentum equation reads
\begin{align*}
-\e^2 {\rm div}\,\big(\eta_r(\e^{2-\gamma}D v_\e ) D v_\e  \big) + \e^{2(2-\gamma)} \dive (v_\e \otimes v_\e) + \nabla p_\e=f,
\end{align*}
i.e., ${\rm Re} = \e^{2(1-\gamma)}$.
According to a two-scale ansatz $v_\e(x) \approx v_0(x,x/\e)$ one expects that the term $Dv_\e$ is of order $\e^{-1}$. Consequently, Theorem 1 in \cite{BM96} shows that the homogenized system is indeed Darcy's law (analogously to \eqref{Darcy.intro} above) as long as $\gamma < 1$. In order to compare the heuristics fully to our setting of small holes, we have to take into account the following facts for a hole size $\e^\a$ with $\a>1$: first, the Reynolds number is of order $\e^{\l + \a - 3}$; second, one expects the term $Dv_\e$ to be of order $\e^{-\a}$ around the holes\footnote{See \eqref{nabW} and \eqref{def-vi-pi-2}.}. These considerations yield precisely the scaling in system \eqref{NonNew}, and also show that the term $\e^{3-\a} D\uu_\e$ is expected to vanish only for $\a<\frac32$. 
}

\subsection{\tc Elements of the proof and structure of the remainder of the paper}
The proof of our results combines several methods. At the core is Tartar's homogenization technique via  oscillating test functions (see e.g. \cite{Tar80} for their use in the derivation of Darcy's law starting from steady Stokes in the case $\alpha = 1$). Suitable matrix valued test functions $W_\e$ in the case $1 < \alpha < 3$ have been developed by Allaire in \cite{All90-1, All90-2}. We state properties of those functions in Section \ref{sec:tools} and postpone the proof to the appendix. Additionally, Section  \ref{sec:tools} contains a Poincar\'e and a Korn inequality as well as a Bogovski\u{\i} operator in perforated domains, all of which are standard in the community.

Sections~\ref{sec:StatNSE} and \ref{sec:EvolNSE} are devoted to convergence of the fluid velocity for the stationary and evolutionary Navier-Stokes systems, respectively. Each of these sections is again split into three subsections: first, a proof of the qualitative homogenization result, then a proof of the quantitative convergence in the torus case, and finally adaptions for bounded domains. 
In all of this, we avoid dealing with the pressure by using only divergence free test functions. More precisely, in the qualitative convergence proof, we test the equation with $\phi_\e = W_\e \phi - B_\e(\phi)$, where $B_\e$ is a Bogovski\u{\i}-type operator (see Proposition \ref{lem:Bogovski.W_eps}) such that $\dive \phi_\e = 0$. This strategy is in the spirit of \cite{Hoefer23} as an alternative to the more classical method of extending the pressure by duality via restriction operators, which is employed for instance in  \cite{All90-1, All90-2, Tar80}. It is interesting to remark that this method does not seem to allow us to capture the full range of our results. We refer to the discussion in Section \ref{sec:pressure}.
The quantitative bounds in Sections~\ref{sec:StatNSE} and \ref{sec:EvolNSE} are obtained by inserting $ W_\e \uu - B_\e(\uu)$ into the relative energy inequality of the primitive Navier-Stokes equation, where $\uu$ is a solution to Darcy's law \eqref{Darcy.intro}. For bounded domains, due to the loss of the no slip boundary condition for $\uu$,  $ W_\e \uu - B_\e(\uu)$ is not admissible in the relative energy inequality. We overcome that by a suitable truncation in an $\eps^{(3 -\alpha)/2}$ neighborhood of $\partial \Omega$ similar to \cite{BalaziAllaireOmnes24} through a suitable vector potential of $\uu$. However, since the layer is of size $\eps^{(3 -\alpha)/2}$, which is optimized for the convergence rates, the support of this truncation intersects significantly with the support of $W_\e - \Id$. This makes the analysis slightly more delicate. 
The analysis in the evolutionary case is mostly parallel to the steady case. 

Finally, in Section~\ref{sec:pressure} we give convergence of the pressure for both stationary and evolutionary NSE, qualitatively and quantitatively. Here, as mentioned above, we do not rely on the standard pressure extension via duality. Instead, we simply extend the pressure by zero (which does not work in the case $\alpha = 1$). We then use a suitable Bogovski\u{\i} operator in $\Omega_\e$ taken from \cite{DFL17} to directly estimate the pressure by combination with the convergence of the fluid velocity. In contrast to the classical homogenization results \cite{All90-1, All90-2}, we do not obtain the convergence of the full pressure weakly in $L^2$, though, even in the steady case. Instead, the pressure is decomposed into two  parts, one that converges weakly in $L^2$ to the pressure of Darcy's law, the other vanishes as $\eps \to 0$ but only in $L^s$ for some $s < 2$. This is reminiscent to related convergence result for the pressure in the context of homogenization for compressible fluid (see e.g. \cite{HKS21, Mas02}). Notably the pressure parts related to the nonlinear terms, both the fluid inertia and the nonlinear non-Newtonian part, are only uniformly bounded in $L^s$ for some $s < 2$.



\section{Toolbox}\label{sec:tools}

\subsection{Notations and weak solutions}
We recall some notations of Sobolev spaces. Let $1\leq q\leq \infty$ and $\O$ be a bounded domain. We use the notation $L_0^q(\O)$ to denote the space of $L^q(\O)$ functions with zero mean value:
$$L_0^q(\O)=\Big\{f\in L^q(\O) \ : \ \int_\O f \dx = 0 \Big\}.$$
We use $W^{1,q}(\O)$ to denote classical Sobolev space, and $W^{1,q}_{0}(\O)$ denotes the completion of $C_c^\infty(\O)$ in $W^{1,q}(\O)$. Here $C_c^\infty(\O)$ is the space of smooth functions compactly supported in $\O$. 
For $1\leq  q <\infty$, $W^{1, q }(\mathbb{R}^3)=W^{1, q}_0(\mathbb{R}^3)$. For $1\leq q\leq \infty$, the functional space $W_{0,\rm div}^{1,q}(\O)$ is defined by
$$W_{0,\rm div}^{1,q}(\O)=\left\{\uu \in W_0^{1,q}(\O;\mathbb{R}^3): \ {\rm div}\,\uu=0 \ {\rm in} \, \O\right\}.$$
Throughout the paper, we use the notation $\tilde f$ to denote the zero extension of any $f\in L^{q}(\O_{\e}), 1\leq q \leq \infty$:
\ba 
\tilde f = \begin{cases} f &  \text{in } \O_{\e}, \\
0 &  \text{in } \RR^3 \setminus \O_{\e}.
\end{cases}
\nn
\ea
We use $C$ and $c$ to denote positive constants independent of $\e$ whose values may differ from line to line; $C$ is usually large while $c$ small.

\medskip

We introduce the definition of weak solutions to \eqref{NonNew}:
\begin{defi}\label{def-weak}
We say that {\tc $(\uu_{\e},p_\e)$} is a  weak solution of \eqref{NonNew} in $\O_{\e}$ provided:
\begin{itemize}
\item $\uu_{\e}\in  W_{0, \rm div}^{1,2}(\O_\e) \cap W_{0, \rm div}^{1,r}(\O_\e)$ {\tc and $p_\e \in L_0^{\min\{2, r'\}}(\O_\e)$};
{\tc 
\item There holds the integral identity for any test function $\varphi\in C_c^\infty( \O_\e;\mathbb{R}^3)$:
\ba\label{eq-weak}
& \int_{\O_{\e}} - \e^\l \uu_\e \otimes \uu_\e:\nabla\varphi + \e^{3-\a}\eta_r(\e^{3-\a}D\uu_\e )D\uu_\e : D\varphi + p_\e \dive \varphi \, {\rm d}x = \int_{\O_\e}\vc{f}\cdot\varphi \dx.
\ea
}


\end{itemize}

\end{defi}

Note that from the regularity of $\uu_\e$, we may use it as test function in \eqref{eq-weak} to get the following energy identity:
\ba\label{energy-ineq}
\e^{3-\a} \int_{\O_\e}\eta_r(\e^{3-\a} D\uu_\e )| D\uu_{\e}|^2\,\dx = \int_{\O_\e}\vc{f}\cdot \uu_{\e} \dx.
\ea

\begin{remark} \label{exis-weak}
The existence of weak solutions to non-Newtonian power law models (i.e., $\eta_r(D\uu_\e ) = \mu |D\uu_\e|^{r-2}$) is known due to the classical result of  Ladyzhenskaya \cite{Lad69}  with $r > 11/5$. The existence theory is then extended to more general $r$ and more general forms of the stress tensor: see Diening,  R\r{u}\v{z}i\v{c}ka, and Wolf \cite{DRW10}, Bul{\'i}{\v c}ek, Gwiazda, M\'alek, and \'Swierczewska-Gwiazda \cite{BGMS12}, for $r>6/5$. For the Carreau-Yasuda law, due to the presence of Newtonian part of the stress tensor (i.e., $\eta_0>0$), the existence of weak solutions can be shown for any $r>1$ following the lines of \cite{BGMS12} or \cite{DRW10}, {\tc see also \cite{MNRR}.}
\end{remark}

\subsection{Inverse of divergence and useful lemmas}
We introduce several useful conclusions which will be frequently used throughout this paper. The first one is the Poincar\'e inequality in perforated domains, the proof of which follows the same lines as \cite[Lemma 3.4.1]{All90-2}:  
\begin{lemma}\label{lem-Poincare}
Let $ \uu \in W_0^{1,q}(\O_\e;\mathbb{R}^3)$ with $1 \leq q < 3$ and $\O_{\e}$ be defined in \eqref{Omega-e} with $\a \geq 1$. Then there holds for some constant $C>0$ independent of $\e$ and $\uu$
\begin{equation}\label{Poincare}
 \| \uu \|_{L^q(\O_\e)}\leq C \min \Big \{ \e^{\frac{3-(3-q)\a}{q}}  ,  1 \Big \}\|\nabla  \uu \|_{L^q(\O_\e)}.
\end{equation}

\end{lemma}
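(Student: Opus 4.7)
The plan is to prove the two bounds comprising the minimum separately. The unconditional estimate $\|\uu\|_{L^q(\Omega_\e)} \leq C\|\nabla \uu\|_{L^q(\Omega_\e)}$ is immediate, since the zero-extension $\tilde\uu\in W_0^{1,q}(\Omega;\R^3)$ satisfies the classical Poincar\'e inequality on the fixed bounded domain $\Omega$, with a constant depending only on $\Omega$.

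For the $\e$-dependent estimate, I would decompose $\Omega_\e$ into the interior perforated cells $\e Q_k \setminus \mathcal T_{\e,k}$ for $k\in K_\e$, plus a boundary layer $\Omega_\e^{\mathrm{bd}} := \Omega \setminus \bigcup_{k\in K_\e} \e Q_k$ of thickness $O(\e)$. On $\Omega_\e^{\mathrm{bd}}$, $\uu$ vanishes on $\partial\Omega$, and a one-dimensional Poincar\'e argument along lines transverse to the $C^{2,\mu}$ boundary gives $\|\uu\|_{L^q(\Omega_\e^{\mathrm{bd}})} \leq C\e\,\|\nabla \uu\|_{L^q(\Omega_\e^{\mathrm{bd}})}$; since $\a \geq 1$ and $q<3$ imply $(3-(3-q)\a)/q \leq 1$, this term is already controlled by the target constant. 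On each interior cell, using that $\mathcal T \supset B(0,r_0)$ for some $r_0>0$, the rescaled function $v(y) := \uu(\e y)$ lives on $Q_k$ and vanishes on the ball $B(x_k, r_0\e^{\a-1})$. The scalings $\|\uu\|_{L^q(\e Q_k)} = \e^{3/q}\|v\|_{L^q(Q_k)}$ and $\|\nabla\uu\|_{L^q(\e Q_k)} = \e^{3/q-1}\|\nabla v\|_{L^q(Q_k)}$ reduce everything to a single-cell estimate, which after summing the $q$-th powers over $k\in K_\e$ and plugging in $r = r_0\e^{\a-1}$ produces exactly the factor $\e^{(3-(3-q)\a)/q}$.

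The heart of the argument is the following capacitary Poincar\'e inequality on the unit cube: for $1\leq q < 3$ and $v\in W^{1,q}(Q_0)$ vanishing on some ball $B(x_0,r)\subset Q_0$,
\begin{equation*}
\|v\|_{L^q(Q_0)} \leq C\,r^{-(3-q)/q}\|\nabla v\|_{L^q(Q_0)}.
\end{equation*}
I would prove this by a telescoping-average argument: by Poincar\'e--Wirtinger, $\|v-\bar v_{Q_0}\|_{L^q(Q_0)} \leq C\|\nabla v\|_{L^q(Q_0)}$, so it suffices to bound $|\bar v_{Q_0}|$. Writing $\bar v_{Q_0}$ as the telescoping sum $\bar v_{Q_0} - \bar v_{B_r}$ (valid since $\bar v_{B_r}=0$) over dyadic balls $B_{2^{-j}}$ shrinking down to $B_r$, each increment $|\bar v_{B_{2^{-j}}} - \bar v_{B_{2^{-j-1}}}|$ is controlled by Poincar\'e--Wirtinger at its scale by $C(2^{-j})^{1-3/q}\|\nabla v\|_{L^q(B_{2^{-j}})}$. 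For $q<3$ the exponent $1-3/q$ is negative, so the geometric series is dominated by its smallest-scale term of magnitude $r^{-(3-q)/q}$, yielding the claim; equivalently, this is the standard capacitary estimate with $\mathrm{cap}_{1,q}(B_r) \sim r^{3-q}$. This sharp capacitary exponent is the only nontrivial ingredient; the remaining bookkeeping (scaling, summation, boundary layer) is routine.
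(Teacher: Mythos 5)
Your proof is correct and takes essentially the same route as the reference the paper points to (Allaire's Lemma 3.4.1 in \cite{All90-2}): a cell-by-cell decomposition, rescaling each $\e$-cell to the unit cube, and a capacitary Poincar\'e inequality for a function vanishing on a ball of radius $\sim\e^{\a-1}$, which produces the factor $r^{-(3-q)/q}$ and hence the exponent $\frac{3-(3-q)\a}{q}$; your treatment of the boundary layer and the observation that $\a\geq 1$ makes the $O(\e)$ boundary contribution subordinate match the standard bookkeeping. The only thing worth noting is that your telescoping proof of the capacitary inequality implicitly uses that the concentric dyadic balls stay inside the cube (true here since the hole sits near the cell centre by $0\in\mathcal{T}\subset B(0,\frac18)$) and needs one extra Poincar\'e--Wirtinger step to pass from $\bar v_{Q_0}$ to the average over the largest ball, both of which you correctly wave off as routine.
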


\medskip

We then give the following standard Korn type inequality:
\begin{lemma} \label{lem-Korn}
Let $\O_{\e}$ be defined in \eqref{Omega-e} with $\a\geq 1$.  Let $1<q<\infty$. For arbitrary $ \uu \in W_0^{1,q}(\O_{\e};\mathbb{R}^3)$, there holds
\begin{equation}\label{Korn}
  \|\nabla  \uu \|_{L^q(\O_{\e})}  \leq C(q)\|D \uu \|_{L^q(\O_{\e})}.
\end{equation}

\end{lemma}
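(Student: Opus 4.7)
The plan is to reduce the Korn inequality on the perforated domain $\Omega_\e$ to the standard Korn inequality on the whole space $\R^3$ (or equivalently, on any bounded open set with zero boundary data), and then exploit the fact that the latter constant depends only on $q$ and the spatial dimension. The key observation is that the zero extension is compatible with the seminorms on both sides of the inequality, so no geometric dependence on $\e$ enters.

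Concretely, given $\uu \in W_0^{1,q}(\Omega_\e; \R^3)$, I would first extend $\uu$ by zero outside $\Omega_\e$; since $\uu$ vanishes on $\partial \Omega_\e$ in the trace sense, the extension $\tilde{\uu}$ lies in $W^{1,q}(\R^3; \R^3)$, and moreover $\nabla \tilde{\uu}$ and $D\tilde{\uu}$ coincide a.e.\ with the zero extensions of $\nabla \uu$ and $D\uu$ respectively. Consequently,
\begin{equation*}
\|\nabla \tilde{\uu}\|_{L^q(\R^3)} = \|\nabla \uu\|_{L^q(\Omega_\e)}, \qquad \|D\tilde{\uu}\|_{L^q(\R^3)} = \|D\uu\|_{L^q(\Omega_\e)}.
\end{equation*}

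Next, I would invoke the classical Korn inequality on $\R^3$ for compactly supported Sobolev fields: for $1 < q < \infty$, there exists a constant $C(q) > 0$ depending only on $q$ (and the dimension) such that
\begin{equation*}
\|\nabla \vv\|_{L^q(\R^3)} \leq C(q) \|D\vv\|_{L^q(\R^3)} \qquad \text{for all } \vv \in W^{1,q}(\R^3; \R^3).
\end{equation*}
This inequality follows, for instance, from the identity $\partial_i \partial_j u_k = \partial_i (Du)_{jk} + \partial_j (Du)_{ik} - \partial_k (Du)_{ij}$ together with the $L^q$-continuity of the Riesz transforms (Calder\'on--Zygmund theory), and crucially its constant is intrinsic to $\R^3$ and independent of any underlying subdomain. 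Applying this to $\vv = \tilde{\uu}$ and reading the identities above backwards yields the claimed bound with an $\e$-independent constant.

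There is really no serious obstacle here; the content of the lemma is precisely that the standard Korn constant transfers to perforated domains \emph{uniformly in} $\e$, a fact which holds as soon as we work with zero-trace fields so that trivial extension keeps us in the Sobolev class. The only point requiring a line of care is the equality $D\tilde{\uu} = \widetilde{D\uu}$ a.e., which is immediate from the zero trace of $\uu$ on $\partial \Omega_\e$ together with the definition of weak derivatives.
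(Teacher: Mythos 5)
Your proof is correct. The paper does not actually supply a proof of this lemma — it is stated as a ``standard Korn type inequality'' and left as such — so there is no paper argument to compare against; your route (zero-extend a $W_0^{1,q}$ field to $\R^3$, noting the extension commutes with taking $\nabla$ and $D$, then invoke the homogeneous Korn inequality on $\R^3$ via the algebraic identity for second derivatives plus $L^q$-boundedness of the Riesz transforms) is precisely the standard way one obtains an $\e$-independent constant, and each step you flag as requiring care — in particular $D\tilde{\uu} = \widetilde{D\uu}$ a.e., which follows from density of $C_c^\infty(\Omega_\e)$ in $W_0^{1,q}(\Omega_\e)$ — is indeed fine.
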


\medskip

The above two Lemmas are used to derive the uniform estimates for the velocity field. In order to get the estimates for the pressure $p_{\e}$, the idea is to use the  equations, which yield the corresponding  estimates for $\nabla p_{\e}$ in a negative norm, and then to employ the Bogovski\u{\i} operator to deduce the uniform estimates for $p_\e$. To this end, we shall recall the following result of Diening, Feireisl, and the second author (see \cite[Theorem 2.3]{DFL17}) which gives a construction of Bogovski\u{\i} type operator in perforated domains: 
\begin{proposition} \label{Bog-op}
Let $\O_\e$ be defined by \eqref{Omega-e}. Then there exists a linear operator
$$
\mathcal{B}_\e\colon L^q_0(\O_\e) \to W^{1,q}_0(\O_\e;\R^3), \qquad \mbox{for all } \ 1<q<\infty,
$$
such that for arbitrary $f \in L^q_0(\O_\e)$ there holds
$$
\begin{aligned}
&\dive \mathcal{B}_\e(f) = f \qquad \text{ a.e. in }\O_\e, \\
&\|\mathcal{B}_\e(f)\|_{W^{1,q}_0(\O_\e)} \leq C \big(1 + \e^{\frac{(3-q)\a-3}{q}}\big) \|f\|_{L^q(\O_\e)},
\end{aligned}
$$
where the constant $C>0$ is independent of $\e$.
\end{proposition}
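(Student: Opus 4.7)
The plan is a two-step global--local construction: first, a classical Bogovski\u{\i} solve on the smooth unperforated domain $\Omega$ produces a candidate with the correct divergence; then a cellwise correction, built from a reference operator on a fixed annular shape via rescaling, removes the nontrivial trace on each hole without altering the divergence.

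\textbf{Step 1 (Global lift).} Extend $f$ by zero to $\tilde f\in L^q_0(\Omega)$ (the zero-mean property is preserved since $f$ has zero mean on $\Omega_\e$) and apply the classical Bogovski\u{\i} operator on the $\e$-independent $C^{2,\mu}$ domain $\Omega$ to obtain $\mathbf v_0\in W^{1,q}_0(\Omega;\R^3)$ with $\dive \mathbf v_0 = \tilde f$ and $\|\mathbf v_0\|_{W^{1,q}_0(\Omega)}\leq C(\Omega,q)\|f\|_{L^q(\Omega_\e)}$. Because $\tilde f\equiv 0$ on every hole, the divergence theorem gives the flux compatibility
\[
\int_{\partial \mathcal T_{\e,k}}\mathbf v_0\cdot\mathbf n\,\mathrm{d}S \;=\; \int_{\mathcal T_{\e,k}}\tilde f\,\mathrm{d}x \;=\; 0 \qquad \forall\, k\in K_\e,
\]
which is exactly the compatibility needed below.

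\textbf{Step 2 (Local correctors).} For each $k\in K_\e$ pick a smooth cut-off $\chi_{\e,k}$ supported in a neighbourhood of $\mathcal T_{\e,k}$ inside $\e Q_k$ and equal to $1$ on $\mathcal T_{\e,k}$, and set $\mathbf u_{\e,k}:=\chi_{\e,k}\mathbf v_0$. Its divergence $\dive \mathbf u_{\e,k}=\chi_{\e,k}\tilde f+\nabla\chi_{\e,k}\cdot\mathbf v_0$ has zero mean on the annular region $A_{\e,k}:=\{\chi_{\e,k}\neq 0\}\setminus\overline{\mathcal T_{\e,k}}$ by the compatibility from Step 1. Applying a Bogovski\u{\i}-type operator $\widehat{\mathcal B}_{A_{\e,k}}$ on $A_{\e,k}$ produces $\mathbf c_{\e,k}\in W^{1,q}_0(A_{\e,k};\R^3)$ with $\dive \mathbf c_{\e,k}=\dive \mathbf u_{\e,k}$. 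Set $\mathbf w_{\e,k}:=\mathbf u_{\e,k}-\mathbf c_{\e,k}$, extended by $\mathbf v_0$ on $\mathcal T_{\e,k}$ and by $0$ elsewhere, and define
\[
\mathcal B_\e(f):=\mathbf v_0-\sum_{k\in K_\e}\mathbf w_{\e,k} \quad \text{on }\Omega_\e.
\]
By construction $\mathcal B_\e$ is linear, $\mathcal B_\e(f)\in W^{1,q}_0(\Omega_\e;\R^3)$ (vanishing on $\partial\Omega$ and on every $\partial \mathcal T_{\e,k}$), and $\dive \mathcal B_\e(f)=f$ a.e.\ in $\Omega_\e$; the supports of the $\mathbf w_{\e,k}$ are pairwise disjoint by \eqref{distHole}.

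\textbf{Step 3 (Quantitative estimate).} To control $\widehat{\mathcal B}_{A_{\e,k}}$, rescale via $y=(x-\e x_k)/\e$ to the fixed-shape reference annulus $A_s:=Q_0\setminus s\overline{\mathcal T}$, where $s=\e^{\alpha-1}$ encodes the ratio of the two length scales $\e^\alpha$ (hole) and $\e$ (cell). Since $A_s$ is not star-shaped, construct the reference operator by decomposing $A_s$ into a star-shaped outer piece (with $s$-independent Bogovski\u{\i} bound) and a thin inner shell around $s\overline{\mathcal T}$, the latter treated by a \emph{second} rescaling $z=y/s$ and the classical star-shaped bound, yielding a norm of order $1+s^{(3-q)/q}$. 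Combining this $s$-dependent norm with the $L^q$- and gradient-rescaling factors ($\e^{3/q}$, $\e^{3/q-1}$) from $A_s$ back to $A_{\e,k}$, estimating the cut-off terms $\|\nabla\chi_{\e,k}\|_\infty\|\mathbf v_0\|_{L^q}$ by the Sobolev--Poincar\'e inequality on $\Omega$ (together with the global bound on $\mathbf v_0$ from Step 1), and summing over $k$ via the disjointness of supports produces the claimed exponent $((3-q)\alpha-3)/q$.

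\textbf{Main obstacle.} The technical heart is the quantitative tracking---through the two successive rescalings by $\e$ and then by $s$---of the Bogovski\u{\i} operator on the degenerating annulus $A_s$. Because $A_s$ is not star-shaped, the classical bound is unavailable as a black box; a careful piecewise decomposition into star-shaped (John) subdomains, together with an optimal choice of the support scale for the cut-off $\chi_{\e,k}$ that balances the cut-off derivative against the $s$-dependent Bogovski\u{\i} constant, is needed to recover the sharp exponent. All other steps reduce to the classical theory.
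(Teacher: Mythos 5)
The paper does not actually prove Proposition~\ref{Bog-op}: it is invoked as a black box from \cite[Theorem~2.3]{DFL17}, so there is no in-paper proof to compare against. Your overall structure (a global Bogovski\u{\i} lift on $\Omega$, followed by per-cell divergence-free correctors killing the trace on each hole via cut-off plus a local Bogovski\u{\i} solve) is the standard and correct one, and Steps~1--2 are fine, including the flux compatibility observation.

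There is, however, a genuine gap in Step~3. As written, the key estimate is phrased as bounding $\|\nabla\chi_{\e,k}\cdot\mathbf v_0\|_{L^q}$ ``by the Sobolev--Poincar\'e inequality on $\Omega$ together with the global bound on $\mathbf v_0$,'' combined with the norm of $\widehat{\mathcal B}_{A_s}$ on the degenerating annulus $A_s=Q_0\setminus s\overline{\mathcal T}$ ($s=\e^{\a-1}$). For $1<q<3$ this route cannot produce the sharp exponent. Indeed, with the tight cut-off of width $\sim\e^\a$ one has, by Sobolev with $q^*=3q/(3-q)$,
\begin{equation*}
\e^{-\a}\,\|\mathbf v_0\|_{L^q(A_{\e,k})}\ \lesssim\ \e^{-\a}\,|A_{\e,k}|^{1/3}\,\|\mathbf v_0\|_{L^{q^*}(A_{\e,k})}\ \lesssim\ \|\mathbf v_0\|_{L^{q^*}(A_{\e,k})},
\end{equation*}
and H\"older over the $\sim\e^{-3}$ cells gives at best $\big(\sum_k\|\mathbf v_0\|_{L^{q^*}(A_{\e,k})}^q\big)^{1/q}\lesssim\e^{-1}\|\mathbf v_0\|_{L^{q^*}(\Omega)}$; the same $\e^{-1}$ appears with a wide cut-off, since for $q<3$ your claimed annulus norm $1+s^{(3-q)/q}$ stays bounded as $s\to0$. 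But $-1 < ((3-q)\a-3)/q$ whenever $\a>1$ and $q<3$, so $\e^{-1}$ is strictly worse than the target. The missing ingredient is a per-cell mean subtraction before cutting off: write $\mathbf v_0 = \bar{\mathbf v}_0^k + (\mathbf v_0-\bar{\mathbf v}_0^k)$ with $\bar{\mathbf v}_0^k=\langle\mathbf v_0\rangle_{\e Q_k}$. With the tight ($\e^\a$-scale) cut-off, the fluctuation satisfies the \emph{scale-invariant} bound $\e^{-\a}\|\mathbf v_0-\bar{\mathbf v}_0^k\|_{L^q(A_{\e,k})}\lesssim\|\nabla\mathbf v_0\|_{L^q(\e Q_k)}$ and contributes only $O(1)$ after summation, while the constant part (lifted by a tight cut-off $\psi_k$, divergence corrected on a \emph{fixed-shape} annulus of scale $\e^\a$, whose Bogovski\u{\i} operator has uniformly $O(1)$ norm) contributes $|\bar{\mathbf v}_0^k|\,\|\nabla\psi_k\|_{L^q}\sim|\bar{\mathbf v}_0^k|\,\e^{(3-q)\a/q}$ per cell, and $|\bar{\mathbf v}_0^k|\leq\e^{-3/q}\|\mathbf v_0\|_{L^q(\e Q_k)}$ lets the sum close exactly at $\e^{((3-q)\a-3)/q}\|f\|_{L^q}$. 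So the exponent does not come from a degenerating annulus constant at all --- the ``main obstacle'' you identify is not the real one for $q<3$ --- and no tuning of the cut-off support scale can substitute for the cell-average split. (For $q>3$ your route does work with $\|\mathbf v_0\|_{L^\infty}\lesssim\|f\|_{L^q}$, but the proposition is used in the paper both for $q$ near $1$ and for $q=2$.)
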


\begin{remark} In bounded Lipschitz domain the existence of Bogovski\u{\i} operator is well-known (see \cite{BOG}, \cite{Gal94}).  In this case, the operator norm depends on the Lipschitz character of the domain, which for the perforated domain $\O_\e$ is unbounded as $\e \to 0$ due to the presence of small holes. The above result gives a Bogovski\u{\i} type operator on perforated domain $\O_{\e}$ with a precise dependency of the operator norm on $\e$. In particular, for $q$ in certain range such that $(3-q)\a-3 \geq 0$, such a Bogovski\u{\i} type operator is uniformly bounded. The construction of such a Bogovski\u{\i} type operator was done by Masmoudi in \cite{Mas02} for the case $\a = 1$ (see a sketch proof of such a construction in \cite{Mas04}), where the estimate constant on the right-hand side is $\frac{1}{\e}$ for any $1<q<\infty$. For the case $q = 2$ and any $\a \geq 1$, the construction of such a Bogovski\u{\i} type operator was shown in \cite{FL15} by employing the restriction operator in \cite{All90-1}, and later such a construction was generalized to $\frac 32 < q < 3$ in \cite{Lu21}.
  \end{remark}

\subsection{Local problem}

To show the homogenization process, we need special oscillating test functions {\tc that are identical to those used by Allaire \cite{All90-1, All90-2}} and some estimates for them. These are mostly taken from \cite{All90-1, All90-2} and later refinements. For completeness, we will {\tc prove them in the appendix}.
{\tc
\begin{proposition}\label{lem-local-1}
Let $1<\a<3$. Then there exists a matrix-valued function $W_{\e} \in W^{1,2}(\O; \R^{3 \times 3})$, a vector-valued function $Q_\e \in L_0^{2}(\O; \R^3)$, and a positive definite symmetric matrix $M_0 \in \R_{sym}^{3 \times 3}$ (called the permeability tensor) such that:
\begin{itemize}
\item $\|W_{\e} \|_{L^{\infty}(\O)} + \e^{\frac{3-\a}{2}} \big(\| \nabla W_{\e} \|_{L^{2}(\O)}  + \| Q_\e \|_{L^{2}(\O)}  \big) \leq C;$
\item $\dive (W_{\e} \vc e_i)  = 0 $ in $\O,$ \  $W_{\e}   = 0$ on the holes $\mathcal{T}_{\e, k}$ for all $k$, and $W_{\e} \to \Id$ strongly in $L^{2}(\O)$;
\item for any $\phi \in C_{c}^{\infty}(\O; \R^3)$, and for any family $\{\gamma_{\e}\}_{\e>0}\subset W^{1,2}(\O;\R^{3})$ satisfying $ \gamma_{\e} = 0$ on the holes $\mathcal{T}_{\e, k}$ for all $k$ and 
$$
\gamma_{\e} \weak \gamma \ \mbox{weakly in} \ L^{2}(\O), \qquad \e^{\frac{3-\a}{2}} \| \nabla \gamma_{\e} \|_{L^{2}(\O)}  \leq C, 
$$
there holds
$$
\e^{3-\a} \langle -\Delta W_{\e} + \nabla Q_\e , \phi \otimes \gamma_{\e} \rangle_{\O} \to \int_{\O} M_{0}\phi \cdot \gamma \dx.
$$

\item For any $q \in [1,\infty]$, we have
\ba \label{estW-I}
\|W_\e - \Id\|_{L^q(\O)} \leq C \begin{cases}
    \e^{\min\{1, \frac{3}{q}\} (\a - 1)} & \text{if } q \neq 3,\\
    \e^{\a - 1} |\log \e|^\frac13 & \text{if } q=3.
\end{cases}
\ea

\item For any $q  \in ( \frac32, \infty]$, we have
\ba\label{nabW}
\| \nabla W_{\e} \|_{L^{q}(\O)}  + \| Q_\e \|_{L^{q}(\O)}  \leq C  \e^{-\a+\frac{3(\a-1)}{q}}.
\ea


\item There holds
\ba \label{We-Qe-eq}
\|\e^{3-\a}(-\Delta W_\e + \nabla Q_\e) - M_0\|_{W^{-1,2}(\O_\e)} \leq C \e.
\ea

\item Let $\varpi \geq \e$ and define $\Omega^\varpi = \{x \in \Omega : {\rm dist}(x, \partial \Omega) < \varpi \}$. Then for any $q \in [1, \infty)$,
\ba \label{W_O_varpi}
    \|\nabla W_\e \|_{L^q(\Omega^\varpi)} &\leq C \varpi^\frac{1}{q} \|\nabla W_\e\|_{L^q(\Omega)}, \\
    \|W_\e - \Id\|_{L^q(\O^\varpi)} &\leq C \varpi^\frac{1}{q} \|W_\e - \Id\|_{L^q(\O)}.
\ea
 \end{itemize}
\end{proposition}

Furthermore, a straightforward generalization of \cite[Lemma~2.3]{Hoefer23} shows the following result:
\begin{lemma} \label{lem:Bogovski.W_eps}
There exists a bounded linear operator $B_\e : W^{1,q}(\O; \R^3) \to W_0^{1,q}(\O_\e; \R^3)$ for all $q \in (1,\infty)$ such that for any solenoidal $\phi \in W^{1,q}(\O; \R^3)$, we have
\ba
\dive B_\e(\phi) = W_\e : \nabla \phi, \nn
\ea
and
\ba 
\e^{-1} \|B_\e(\phi)\|_{L^q(\O_\e)} + \|\nabla B_\e(\phi)\|_{L^q(\O_\e)} \leq C \|(W_\e - \Id) : \nabla \phi\|_{L^q(\O)}. \nn
\ea
\end{lemma}

Thus, for each $\phi\in C_{c}^{\infty}(\O; \R^3)$, the modified function $\phi_\e := W_\e \phi - B_\e(\phi)$ becomes a good test function in the weak formulation of the original non-Newtonian equations in $\O_{\e}$. Note especially that $\phi_\e |_{\d \O_\e} = 0$, $\dive \phi_\e = 0$, and
\begin{align} \label{phi_eps}
\|\phi_\e - \phi\|_{L^q(\O_\e)} \leq C \Big( \|(W_\e - \Id) \phi\|_{L^q(\O_\e)} + \e \|(W_\e - \Id):\nabla \phi\|_{L^q(\O_\e)} \Big) \leq C \e^{\min\{1, \frac3q \}(\a-1)}
\end{align}
for any $q \neq 3$. Note that to deal with $q= \infty$, one can use the interpolation estimate
\begin{align*}
    \| B_\e(\phi) \|_{L^\infty(\Omega_\e)} \leq C\| B_\e(\phi)\|^{\frac 1 2}_{L^6(\Omega_\e)}\| \nabla B_\e(\phi)\|^{\frac 1 2}_{L^6(\Omega_\e)}.
\end{align*}
Moreover, for any $q \in (3/2,\infty)$,
\begin{align} \label{nabla.phi_eps}
\|\nabla \phi_\e \|_{L^q(\O_\e)} \leq C \Big( \|\nabla W_\e\|_{L^q(\O_\e)} + \|W_\e\|_{L^q(\O_\e)} + \|W_\e - \Id\|_{L^q(\O_\e)}\Big) \leq C \e^{-\a + \frac{3(\a-1)}{q}}, 
\end{align}
where we additionally used $W_\e = 0$ in $\O \setminus \O_\e$ and Poincar\'e inequality in $\O$ to see that $\|W_\e\|_{L^q(\O_\e)} \leq C \|\nabla W_\e\|_{L^q(\O_\e)}$. In particular, $\phi_\e \in W_{0, \dive}^{1,q}(\O_\e)$ for any $q \in (3/2,\infty)$.
}
By careful analysis, passing $\e \to 0$ gives the desired homogenized systems. This will be done in the next sections.

\section{Stationary NSE}\label{sec:StatNSE}
\subsection{Qualitative homogenization}

Our first result concerning the homogenization of the steady Navier-Stokes system reads as follows.

\begin{theorem}\label{thm-1}
Let
\begin{align*}
r>1 && 1<\a<\frac 32, && \l > \a.
\end{align*}
Let {\tc $\eta_r : \R_{sym}^{3\times 3} \to [0, \infty)$ be a continuous function satisfying \eqref{monG}--\eqref{growthG},} let $(\uu_\e)$ be a  weak solution of equation \eqref{NonNew}, and recall that $\tilde\uu_\e$ is the zero extension of $\uu_\e$. 
Then $\tilde \uu_\e \weak \uu$ weakly in $L^{2}(\O)$,
where $\uu$ satisfies the Darcy's law
\begin{align}\label{Darcy}
\begin{cases}
\frac{1}{2}\eta_0\uu = M_{0}^{-1}(\mathbf{f}-\nabla p) & \text{in }  \O,\\
\dive \uu = 0 & \text{in } \O,\\
\uu \cdot \mathbf{n} = 0 & \text{on }  \d \O,
\end{cases}
  \end{align}
with {\tc the  permeability tensor  $M_{0}$ as in Proposition~\ref{lem-local-1}. Additionally, if $g_r \equiv 0$, then the above conclusion holds true for any $1 < \a < 3$.}
\end{theorem}

%
{\tc
\begin{remark}
Thanks to $\uu \in L^2(\O)$ and $\ff \in L^2(\O)$, we also have $\nabla p \in L^2(\O)$. In turn, the system \eqref{Darcy} has to be understood in weak sense rather than a distributional one. The boundary condition on $\uu$ is well-defined due to the existence of a normal trace for divergence-free function, see \cite[Chapter~10.3]{F-N-book}, and is of course redundant if $\Omega = \mathbb{T}^3$.
\end{remark}
}

 The next two subsections are devoted to the proof of Theorem~\ref{thm-1}. 

\subsubsection{Uniform bounds}\label{sec:statBds}
It follows from the energy identity \eqref{energy-ineq},  the Poincar\'e inequality, and the Korn inequality (see Lemmas \ref{lem-Poincare} and \ref{lem-Korn}) that {\tc
\ba
\e^{3-\a} \int_{\O_\e} \eta_r(|\e^{3-\a} D \uu_\e|) | D\uu_\e |^2 \dx \leq \int_{\O_\e}\mathbf{f}\cdot \uu_\e  \dx   \leq  C \e^{\frac{3-\a}{2}} \|D\uu_\e \|_{L^2(\O_\e)} \| \ff \|_{L^2(\O_\e)}. \nn
\ea
Hence, by the coercivity in \eqref{monG},
\ba
\e^{3-\a} \|D\uu_\e \|_{L^2(\O_\e)}^{2} + \e^{(3-\a)(r-1)} \|D\uu_\e \|_{L^{r}(\O_\e)}^{r} \mathbf{1}_{r>2}  \leq  C. \nn
\ea

Consequently, the zero extension $\tilde \uu_{\e}$ of $\uu_{\e}$ satisfies
\ba\label{est-u-f}
\e^{\frac{3-\a}{2}} \|\nabla \tilde \uu_{\e}\|_{L^2(\O)}  \leq C, \qquad \| \tilde \uu_{\e}\|_{L^2(\O )} \leq C , \qquad \e^{\frac{(3-\a)(r-1)}{r}} \|\nabla \tilde \uu_{\e}\|_{L^{r}(\O)} \mathbf{1}_{r>2} \leq  C ,
\ea
}
and there exists $\uu \in L^2(\O)$ such that, up to a subsequence,
\ba\label{est-u-f2}
  \tilde  \uu_{\e} \weak \uu \ \mbox{weakly in} \ L^{2}(\O), \qquad \dive \uu = 0  \ \mbox{in} \ \O, \qquad \uu\cdot \vc n =0 \ \mbox{on} \ \d \O.
\ea

\subsubsection{\tc Proof of Theorem \ref{thm-1}}\label{sec:statQual}
To begin, we remark that it is enough to concentrate on divergence-free test functions. This is because the pressure $p_\e$ is uniquely determined on the complement of this space; see \cite[Lemma II.2.2.2]{Sohr01} for details.
\\

Given any solenoidal vector-valued function $\phi\in C_c^\infty(\O; \R^3)$, {\tc as mentioned after Lemma \ref{lem:Bogovski.W_eps}, we may take $\phi_\e = W_\e \phi - B_\e(\phi) \in W_{0, \dive}^{1,q}(\O_\e), \ q>\frac32,$ as a test function in the weak formulation of the momentum equation to infer}
\ba\label{limit-0}
 \int_{\O_{\e}} - \e^\l \uu_{\e}\otimes \uu_{\e}:\nabla \phi_\e + \e^{3-\a} \eta_r(\e^{3-\a} D\uu_\e )D\uu_{\e} : D \phi_\e \dx = \int_{\O_\e}\ff \cdot \phi_{\e} \dx.
\ea
Since $\phi_{\e}$ vanishes on  the holes and the extension $\tilde \uu_{\e}$ coincides with $\uu_{\e}$ in $\O_{\e}$,  we can write \eqref{limit-0} as
\ba
 \int_{\O} - \e^\l \tilde  \uu_{\e}\otimes \tilde \uu_{\e}:\nabla \phi_{\e} + \e^{3-\a} \eta_r(\e^{3-\a} D\tilde \uu_\e)D \tilde \uu _{\e} : D \phi_{\e} \dx = \int_{\O}\mathbf{f} \cdot \phi_{\e} \dx. \nn
\ea

{\tc We will focus on each of the terms separately.}

\medskip

{\tc First, using the uniform estimates in \eqref{est-u-f}, we have by interpolation and Sobolev embedding that
\ba
\|\uu_\e \otimes \uu_{\e}\|_{L^{q_{1}}(\O_\e)} \leq  \|\uu_\e \|_{L^{2q_1}(\O_\e)}^2  \leq \| \uu_\e\|_{L^2(\O_\e)}^{2(1-\th)}  \| \uu_\e\|_{L^6(\O_\e)}^{2\th} \leq C \| \uu_\e\|_{L^2(\O_\e)}^{2(1-\th)}  \| \nabla \uu_\e\|_{L^2(\O_\e)}^{2\th} \leq C \e^{-\th(3-\a)}, \nn
\ea
where 
\ba
1 \leq   q_{1} \leq   3, \qquad 0 \leq  \th \leq  1 , \qquad \frac{1}{q_{1}} = (1-\th) + \frac{\th}{3} = 1 - \frac{2\th}{3}. 
\nn\ea
 Therefore, it follows from \eqref{est-u-f} and \eqref{nabla.phi_eps}, together with the assumption $\l > \a$, that
\ba\label{limit-2}
 \int_{\O} - \e^\l \tilde \uu_{\e}\otimes \tilde \uu_{\e}:\nabla \phi_{\e} \dx &\leq C \e^\l \| \uu_{\e}\|_{L^{\frac{6}{3-2\theta}}(\O_\e)}^{2} \|\nabla \phi_{\e}\|_{L^{\frac{3}{2\th}}(\O_\e)} \leq C \e^{\l - \a - \th(5 - 3\a)} \to 0,
\ea
}
where we choose $\th>0$ suitably small. Furthermore,
\ba
\int_{\O}\mathbf{f}\cdot \phi_{\e} \dx \to \int_{\O}\mathbf{f}\cdot \phi \dx, \nn
\ea
which follows from \eqref{phi_eps} and $\alpha > 1$.

\medskip

We finally consider 
\ba\label{limit-6}
&\e^{3-\a} \int_{\O}  \eta_r(\e^{3-\a} D\tilde \uu_\e) D  \tilde \uu _{\e} : D \phi_{\e} \dx = \e^{3-\a} \int_{\O} \eta_0 D \tilde \uu _{\e} : D \phi_{\e} \dx +  \e^{3-\a}  \int_{\O} g_r(|\e^{3-\a} D \tilde \uu_\e|) D  \tilde \uu _{\e} : D \phi_{\e} \dx \\
&= \e^{3-\a} \int_{\O} \eta_0 D \tilde \uu _{\e} : D (W_\e \phi) \dx - \e^{3-\a} \int_{\O} \eta_0 D \tilde \uu _{\e} : D B_\e(\phi) \dx +  \e^{3-\a}  \int_{\O} g_r(|\e^{3-\a} D \tilde \uu_\e|) D  \tilde \uu _{\e} : D \phi_{\e} \dx.
\ea
For the first term on the right side of \eqref{limit-6} we have by $\dive \tilde \uu_\e = 0$ (see also \cite{All90-1})
\ba\label{limit-7}
\e^{3-\a} \int_{\O} \eta_0 D \tilde \uu _{\e} : D (W_{\e} \phi)\dx  & = \frac{\eta_0}{2} \e^{3-\a} \int_{\O}  \nabla  \tilde \uu _{\e} : \nabla (W_{\e} \phi)\dx  =  \frac{\eta_0}{2} \e^{3-\a} \langle  -\Delta (W_{\e} \phi) + \nabla (Q_\e \cdot \phi),  \tilde \uu _{\e} \rangle_\Omega \\
& = \frac{\eta_0}{2} \e^{3-\a} \langle  (-\Delta W_{\e} + \nabla Q_\e) \phi + \vc z_\e ,  \tilde \uu _{\e} \rangle_\Omega  \\
& = \frac{\eta_0}{2} \e^{3-\a} \langle  (-\Delta W_{\e} + \nabla Q_\e) \phi, \tilde \uu_\e \rangle_\Omega + \frac{\eta_0}{2} \e^{3-\a} \int_{\O}   \tilde \uu _{\e} \cdot \vc z_\e \dx,
\ea
where we set
\ba
\vc z_\e &= (\Delta W_\e) \phi - \Delta(W_\e \phi) + (Q_\e \cdot \nabla) \phi, \\
\|\vc z_\e\|_{L^2(\O)} &\leq ( \| W_\e\|_{W^{1,2}(\O)} + \|Q_\e\|_{L^2(\O)} ) \|\phi\|_{W^{2,\infty}(\O)} \leq C \e^{\frac{\a - 3}{2}}. \nn
\ea
It follows from \eqref{est-u-f}--\eqref{est-u-f2} and {\tc Proposition}~\ref{lem-local-1} that 
\ba\label{limit-8}
&  \e^{3-\a} \int_{\O}   \tilde \uu _{\e} \cdot \vc z_\e \dx    \leq C \e^{\frac{3-\a}{2}} \to 0, \\
 & - \frac{\eta_0}{2} \e^{3-\a} \langle  (\Delta W_{\e} -\nabla Q_\e) \phi, \tilde \uu_\e \rangle_\Omega  = \frac{\eta_0}{2} \e^{3-\a} \langle -\Delta W_{\e} +  \nabla Q_\e , \phi \otimes \tilde \uu_{\e} \rangle_{\O}   \to \frac{\eta_0}{2} \int_{\O} M_{0} \phi \cdot \uu \dx .
\ea
As long as there holds
{\tc
\ba\label{limit-9}
 \e^{3-\a} \int_{\O} g_r(|\e^{3-\a} \tilde D \uu_\e|) D  \tilde \uu _{\e} : D (\phi_\e)\dx &\to 0, \\
\e^{3-\a} \int_\O \eta_0 D \tilde \uu_\e : D B_\e(\phi) \dx &\to 0,
\ea
}
we deduce 
\ba
 \frac{\eta_0}{2} \int_{\O} M_{0} \phi \cdot \uu \dx =\int_{\O} \mathbf{f} \cdot \phi \dx. \nn
\ea
{\tc Since $\phi$ is solenoidal and $M_0$ is symmetric positive definite, we can deduce from \cite[Lemma II.2.2.2]{Sohr01} that there exists a pressure\footnote{As mentioned before, the fact that indeed $p = \lim_{\e \to 0} \tilde p_\e$ will be shown in Section~\ref{sec:pressure}.} $p$ such that the above equation is exactly the Darcy's law \eqref{Darcy}.} It is left to show \eqref{limit-9}. 

\medskip

{\tc We start with the second integral in \eqref{limit-9}. Given the bounds on $\tilde \uu_\e$ and $\nabla B_\e(\phi)$ (see Lemma~\ref{lem:Bogovski.W_eps}), we find
\ba 
\e^{3-\a} \int_\O \eta_0 D \tilde \uu_\e : D B_\e(\phi) \dx &\leq C \e^{3-\a} \|\nabla \tilde \uu_\e\|_{L^2(\O)} \|\nabla B_\e(\phi)\|_{L^2(\O)} \\
&\leq C \e^\frac{3-\a}{2} \|(W_\e - \Id) : \nabla \phi\|_{L^2(\O)} \leq C \e^\frac{3-\a}{2} \|\nabla \phi\|_{L^2(\O)} \to 0. \nn
\ea 
}

{\tc Let us come to the first integral in \eqref{limit-9}. First, if $g_r \equiv 0$ being the case for Newtonian fluids, we finish the proof already at this stage and Theorem~\ref{thm-1} is proven for any $1 < \a < 3$. We will therefore focus on the case $g_r \not \equiv 0$ in the sequel, where we additionally assume $1 < \a < \frac32$.}\\

By the growth condition \eqref{growthG}, we {\tc see in particular that for $1<r\leq 2$, we have
\begin{align}\label{G1}
    |g_r(|\e^{3-\a} D \tilde \uu_\e|) | \leq C |\e^{3-\a} D \tilde \uu_\e|^\kappa, \qquad \forall \kappa \in (0,1).
\end{align}
}
In turn, using \eqref{est-u-f} and {\tc \eqref{nabla.phi_eps} for $1<r \leq 2$ gives
\ba\label{limit-11}
& \e^{3-\a} \Big|  \int_{\O} g_r(|\e^{3-\a} D  \tilde \uu _{\e}|) D \tilde \uu_\e : D \phi_{\e} \dx \Big|\\
  &\leq C \e^{(3-\a)(1+\kappa)}  \int_{\O}|  D  \tilde \uu _{\e} |^{1+\kappa} |D \phi_{\e} | \dx  \\
  &\leq  C \e^{(3-\a)(1+\kappa)} \| | \nabla  \tilde \uu _{\e} |^{1+\kappa} \|_{L^{\frac{2}{1+\kappa}}(\O)}  \| \phi_{\e} \|_{W^{1, \frac{2}{1-\kappa}}(\O)}  \\
  & \leq C \e^{(3-\a)(1+\kappa)} \e^{-\frac{(3-\a)}{2} (1+\kappa)} \e^{ -\a + 3(\a - 1) \frac{1-\kappa}{2}}   =  C \e^{(3 - 2\a)\kappa} \to 0,
\ea
}
under the assumption $1<\a<\frac{3}{2}$.

For {\tc $r > 2$}, using  \eqref{est-u-f} and {\tc Proposition~\ref{lem-local-1}, together with
\begin{align*}
    |g_r(|\e^{3-\a} D \tilde \uu_\e |)| \leq C \Big( |\e^{3-\a} D \tilde \uu_\e|^\kappa + |\e^{3-\a} D\tilde \uu_\e|^{r-2} \Big) \qquad \forall \kappa \in (0,1),
\end{align*}
yields
\ba\label{limit-12}
& \e^{3-\a} \Big|  \int_{\O} g_r(|\e^{3-\a} D\tilde \uu_\e|) D \tilde \uu _{\e} : D \phi_{\e} \dx \Big|\\
  &\leq C \int_{\O} \e^{(3-\a)(1+\kappa)} |  D  \tilde \uu _{\e} |^{1+\kappa} : |D \phi_{\e} | \dx + C \int_{\O} \e^{(3-\a)(r-1)} |  D  \tilde \uu _{\e} |^{r-1} : |D \phi_{\e} | \dx  \\
  &\leq  C \e^{(3-2\a)\kappa} + C \e^{(3-\a)(r-1)} \| | \nabla  \tilde \uu _{\e} |^{r-1} \|_{L^{\frac{r}{r-1}}(\O)}  \| \phi_{\e} \|_{W^{1, r}(\O)}  \\
  & \leq  C \e^{(3-2\a)\kappa} + C \e^{(3-\a)(r-1)} \e^{-(3-\a) \frac{(r-1)^2}{r}} \e^{ -\a + \frac{3(\a - 1)}{r}}   =   C \e^{(3-2\a)\kappa} + C \e^{\frac{(3 - 2\a)(r-2)}{r}} \to 0,
\ea
under the assumption $1<\a<\frac{3}{2}$. All in all, \eqref{limit-9} is shown, which finishes the proof of Theorem~\ref{thm-1}.
}

\subsection{Quantitative homogenization on the torus}\label{sec:statQuan}
In this section, we will give explicit convergence rates {\tc for the torus case}. The torus has the advantage that we do not need to deal with the mismatch in the outer boundary conditions for a bounded domain, namely $\uu_\e|_{\d \O} = 0$ versus $\uu \cdot \vc n|_{\d \O} = 0$. We will come back to this issue in Section~\ref{sec:statBdDom}.\\
\begin{theorem}\label{thm-4}
Let {\tc $\O = \mathbb{T}^3$ and }
\begin{align*}
r>1, && 1<\a<\frac 32, && \l > \a.
\end{align*}
Let {\tc  $\eta_r$ comply with \eqref{monG}--\eqref{growthG},} let $(\uu_\e,p_\e)$ be a weak solution to \eqref{NonNew}, and let $(\uu, p) \in   [W^{1,\infty}(\O) \cap W^{2,2}(\O) ] \times W^{1,\infty}(\O)$ with $\dive \uu = 0$ be a strong solution to Darcy's law \eqref{Darcy}. Then, there exists an $\e_0 > 0$ such that for all $\e \in (0, \e_0)$, we have
{\tc
\ba\label{ConvSpeedS}
\|\tilde{\uu}_\e - \uu \|_{L^2(\O)}^2 & \leq C ( \e^{\a-1} + \e^{3-\a} + \e^{2(3-2\a)} ),
\ea
}
where the constant $C>0$ is independent of $\e$. {\tc If $g_r \equiv 0$, the we may choose any $1<\a<3$, and the last term in \eqref{ConvSpeedS} can be dropped.}
\end{theorem}

\begin{remark}
\begin{itemize}
\item To the best of our knowledge, this is the first result concerning the convergence rates in homogenization of non-Newtonian fluids.
\item Compared to \cite[Theorem~3.4.14]{All90-2}, we have the same convergence rates since in there, {\tc $g_r \equiv 0$} (hence the last term in \eqref{ConvSpeedS} vanishes). {\tc Note that \cite[Theorem~3.4.14]{All90-2} only applies to the torus case because boundary conditions are not taken into account; we will come back to this in Section~\ref{sec:statBdDom}.}
\end{itemize}
\end{remark}

\subsubsection{Relative energy identity}
From \eqref{eq-weak}, we have for any {\tc solenoidal} $\vU\in C^{\infty}(\O_{\e};\R^{3})$ satisfying $\vU|_{\d \O_\e} = 0$, 
\ba
\int_{\O_{\e}} - \e^\l \uu_{\e}\otimes \uu_{\e}:\nabla\vU  + \e^{3-\a} \eta_r(\e^{3-\a} D\uu_\e )D\uu_{\e} : D\vU \dx =  \int_{\O_\e}\mathbf{f}\cdot\vU  \dx. \nn
\ea

By the energy identity \eqref{energy-ineq}, we get
\ba
&\int_{\O_{\e}} \e^{3-\a} [ \eta_r(\e^{3-\a} D\uu_\e )D\uu_{\e} -  \eta_r(\e^{3-\a} D\vU  )D\vU  ] : (D\uu_\e - D\vU ) \dx \\
&= \int_{\O_\e} \e^{3-\a} \eta_r(\e^{3-\a} D \vU ) D \vU  : (D \vU  - D \uu_\e) \dx \\
&\qquad - \int_{\O_\e} \e^\l \uu_{\e}\otimes \uu_{\e}:\nabla\vU  \dx + \int_{\O_\e}\mathbf{f}\cdot (\uu_\e - \vU ) \dx. \nn
\ea
Using moreover $\dive \uu_\e = 0$ and $\uu_\e |_{\d \O_\e} = 0$, we may write
\ba
-\int_{\O_\e} \e^\l \uu_\e \otimes \uu_\e : \nabla \vU  \dx = -\int_{\O_\e} \e^\l ((\uu_\e \cdot \nabla) \vU ) \cdot (\uu_\e - \vU ) \dx \nn
\ea
to get the final {\tc relative energy identity (REI)}
\ba \label{relEn}
&\int_{\O_{\e}} \e^{3-\a} [ \eta_r(\e^{3-\a} D\uu_\e )D\uu_{\e} -  \eta_r(\e^{3-\a} D\vU  )D\vU  ] : D(\uu_\e - \vU ) \dx \\
&= \int_{\O_\e} \e^{3-\a} \eta_r(\e^{3-\a} D \vU ) D \vU  : (D \vU  - D \uu_\e) \dx - \e^\l \int_{\O_\e} (\uu_\e - \vU ) \cdot ( (\uu_\e \cdot \nabla) \vU  ) \dx \\
&\qquad + \int_{\O_\e}\mathbf{f}\cdot (\uu_\e - \vU ) \dx .
\ea

Note that inserting $\vU =0$ in the above yields the standard energy identity \eqref{energy-ineq}. Moreover, by density, the REI \eqref{relEn} holds for any {\tc $\vU \in W^{1,\max\{2,r\}}_{0, \rm{div}}  (\O_\e) $}.

\subsubsection{Proof of Theorem \ref{thm-4}}\label{sec:statQuan1}
Let $\uu$ be a strong solution to Darcy's law \eqref{Darcy} as in Theorem \ref{thm-4}. Using the operator $B_\e$ from Lemma~\ref{lem:Bogovski.W_eps}, we define {\tc $\vc w_\e = W_\e \uu - B_\e(\uu)$}.  Then {\tc for any $q>\frac32$, we have $\vc w_\e \in W_{0, \dive}^{1,q}(\O_\e)$ and}, by \eqref{phi_eps} and \eqref{nabla.phi_eps}, there holds  
\ba \label{est.w_e-stat}
&\|\vc w_\e\|_{L^\infty(\O)}  + {\tc \e^{\a - \frac{3(\a-1)}{q}} \|\nabla \vc w_\e\|_{L^q( \O)}} \leq C, \ {\tc q \in  (3/2, \infty)},\\
& \|\vc w_\e - \uu\|_{L^q(\O_\e)} \leq C \e^{\min\{1, \frac{3}{q} \} (\a - 1)},  \ \forall \, q \geq  1, \ {\tc q \neq 3}, \qquad \vc w_\e |_{\d \O_\e} = 0.
\ea

We choose $\vU = \vc w_\e$ in \eqref{relEn} to get
\begin{align*}
\begin{split}
&\e^{3-\a} \int_{\O_\e} [ \eta_r(\e^{3-\a} D \uu_\e) D\uu_\e - \eta_r(\e^{3-\a} D \vc w_\e) D\vc w_\e] : \nabla (\uu_\e - \vc w_\e) \dx + \e^\l \int_{\O_\e} (\uu_\e \cdot \nabla) \vc w_\e \cdot (\uu_\e - \vc w_\e) \dx \\
&= \e^{3-\a} \int_{\O_\e} \eta_r(\e^{3-\a} D \vc w_\e) D\vc w_\e : \nabla (\vc w_\e - \uu_\e) \dx + \int_{\O_\e} \vc f \cdot (\uu_\e - \vc w_\e) \dx.
\end{split}
\end{align*}

Next, we replace $\vc f$ by the Darcy's law \eqref{Darcy} to get
\begin{align*}
\int_{\O_\e} \vc f \cdot (\uu_\e - \vc w_\e) \dx = \int_{\O_\e} \Big( \frac{\eta_0}{2} M_0 \uu + \nabla p \Big) \cdot (\uu_\e - \vc w_\e) \dx.
\end{align*}
{\tc Using solenoidality of $\uu_\e$ and $\vc w_\e$, we see
\ba
\int_{\O_\e} \nabla p \cdot (\uu_\e - \vc w_\e) \dx = 0.
\nn
\ea

Hence, the REI takes the form
\ba\label{rel-en-1}
&\int_{\O_{\e}} \e^{3-\a} [ \eta_r(\e^{3-\a} D\uu_\e )D\uu_{\e} -  \eta_r(\e^{3-\a} D\vc w_\e )D\vc w_\e ] : D(\uu_\e - \vc w_\e ) \dx \\
&=\int_{\O_\e} \e^{3-\a} \eta_r(\e^{3-\a} D \vc w_\e) D\vc w_\e : (D \vc w_\e - D \uu_\e ) \dx \\
& \qquad - \e^\l \int_{\O_\e} (\uu_\e - \vc w_\e) \cdot ( (\uu_\e \cdot \nabla) \vc w_\e) \dx  + \int_{\O_\e} \frac{\eta_0}{2} M_0 \uu \cdot (\uu_\e - \vc w_\e) \dx.
\ea
}
To conclude, we will estimate all terms separately.\\

{\tc The dissipation, i.e., the first left-hand side term, is bounded below due to \eqref{monG} and Korn's inequality \eqref{Korn} by
\begin{align} \label{dissipation}
    &\int_{\O_{\e}} \e^{3-\a} [ \eta_r(\e^{3-\a} D\uu_\e )D\uu_{\e} -  \eta_r(\e^{3-\a} D\vc w_\e )D\vc w_\e ] : D(\uu_\e - \vc w_\e ) \dx \\
    &\geq c \e^{3-\a}  \|\nabla (\uu_\e - \vc w_\e )\|_{L^2(\O_\e)}^2 + c \mathbf{1}_{r>2} \e^{(3-\a)(r-1)}  \|\nabla (\uu_\e - \vc w_\e )\|_{L^r(\O_\e)}^r. 
\end{align}
}

The second term on the right-hand side of \eqref{rel-en-1} may be split as
\ba
&\e^\l \int_{\O_\e} (\uu_\e - \vc w_\e) \cdot ( (\uu_\e \cdot \nabla) \vc w_\e) \dx \\
&= \e^\l \int_{\O_\e} (\uu_\e - \vc w_\e) \cdot ( (\vc w_\e \cdot \nabla) \vc w_\e) \dx + \e^\l \int_{\O_\e} (\uu_\e - \vc w_\e) \cdot ((\uu_\e - \vc w_\e) \cdot \nabla) \vc w_\e \dx\\
&\leq C \e^\l \|\nabla \vc w_\e\|_{L^2(\O_\e)} \|\uu_\e - \vc w_\e\|_{L^2(\O_\e)} {\tc \|\vc w_\e\|_{L^\infty(\O_\e)}} + C \e^\l {\tc \|\nabla (W_\eps \uu)\|_{L^\infty(\O_\e)} } \|\uu_\e - \vc w_\e\|_{L^2(\O_\e)}^2 \\
& \qquad {\tc + \|\nabla B_\eps(\uu)\|_{L^3(\O_\e)} \|\uu_\e - \vc w_\e\|_{L^2(\O_\e)} \|\nabla(\uu_\e - \vc w_\e)\|_{L^2(\O_\e)}} \\
& \leq C \e^\l \|\nabla (\uu_\e - \vc w_\e)\|_{L^2(\O_\e)} + C {\tc (\e^{\l+3 - 2\a} + \e^{\l + \a - 1 + \frac{3-\a}2} |\log \e|^{\frac 1 3})} \|\nabla (\uu_\e - \vc w_\e)\|_{L^2(\O_\e)}^2 \\
& \leq C_\delta \e^{2\l - (3-\a)} + (\delta \e^{3-\a} + C  \e^{\l+3 - 2\a}) \|\nabla (\uu_\e - \vc w_\e)\|_{L^2(\O_\e)}^2,
\nn \ea
{\tc due to the estimates on $W_\e$ and $B_\e$ from Proposition \ref{lem-local-1} and Lemma \ref{lem:Bogovski.W_eps}, the Poincar\'e inequality in Lemma \ref{lem-Poincare}, and where we used $\alpha > 1$ in the last estimate.}
Note that $\l + 3 - 2 \a = (\l - \a) + (3 - \a)$, so by $\l >  \a $ and \eqref{dissipation}, we can absorb the last term by the left-hand side of \eqref{rel-en-1} for $\e$ and $\delta$ small enough. Further, again due to $\l>\a$, we have {\tc $\a-1 \leq 2\l - (3 - \a)$}. Thus, for $\e>0$ {\tc and $\delta > 0$} small enough,
\ba \label{relE1-stat}
&c \e^{3-\a}  \|\nabla (\uu_\e - \vc w_\e )\|_{L^2(\O_\e)}^2 + c \mathbf{1}_{r>2} \e^{(3-\a)(r-1)}  \|\nabla (\uu_\e - \vc w_\e )\|_{L^r(\O_\e)}^r  \\
&\leq \int_{\O_\e} \e^{3-\a} \eta_r(\e^{3-\a} D \vc w_\e) D\vc w_\e : (D \vc w_\e - D \uu_\e ) \dx + \int_{\O_\e} \frac{\eta_0}{2} M_0 \uu \cdot (\uu_\e - \uu) \dx + C \e^{\a-1}. \ea

Using the definition of $\eta_r$, we split
\ba\label{eta-split-1}
&\int_{\O_\e} \e^{3-\a} \eta_r(\e^{3-\a} D \vc w_\e) D\vc w_\e : D( \vc w_\e - \uu_\e) \dx \\
&= \e^{3-\a} \int_{\O_\e} \eta_0 D \vc w_\e : D(\vc w_\e - \uu_\e) \dx + \e^{3-\a} \int_{\O_\e} g_r(|\e^{3-\a} D \vc w_\e|) D \vc w_\e : D(\vc w_\e - \uu_\e) \dx \\
&= \e^{3-\a} \int_{\O_\e} \frac{\eta_0}{2} (- \Delta (W_\e \uu) + \nabla (Q_\e \cdot \uu)) \cdot (\vc w_\e - \uu_\e) \dx - \e^{3-\a} \int_{\O_\e} {\tc \frac{\eta_0}{2} } \nabla (Q_\e \cdot \uu) \cdot (\vc w_\e - \uu_\e) \dx \\
&\qquad + \e^{3-\a} \int_{\O_\e} g_r(|\e^{3-\a} D \vc w_\e|) D \vc w_\e : D(\vc w_\e - \uu_\e) \dx {\tc - \e^{3-\a} \int_{\O_\e} \eta_0 D B_\e(\uu) : D(\vc w_\e - \uu_\e) \dx}. 
\ea

As for the first integral on the right-hand side, we see
\ba
&- \Delta (W_\e \uu) + \nabla (Q_\e \cdot \uu) = (-\Delta W_\e + \nabla Q_\e) \uu + \vc z_\e,\\
&\vc z_\e = (\Delta W_\e) \uu  - \Delta (W_\e \uu) + (Q_\e \cdot \nabla) \uu , \qquad \|\vc z_\e\|_{L^2(\O_\e)} \leq C \e^\frac{\a - 3}{2}. \nn
\ea

From {\tc Proposition~\ref{lem-local-1}}, we know
\ba
\|\e^{3-\a}( -\Delta W_\e + \nabla Q_\e) - M_0\|_{W^{-1,2}(\O_\e)} \leq C \e.
\nn\ea
Hence, we may write
\ba
&\e^{3-\a} \int_{\O_\e} \frac{\eta_0}{2} (- \Delta (W_\e \uu) + \nabla (Q_\e \cdot \uu)) \cdot (\vc w_\e - \uu_\e) \dx \\
&= \int_{\O_\e} \frac{\eta_0}{2} [\e^{3-\a}(-\Delta W_\e + \nabla Q_\e) - M_0] \uu \cdot (\vc w_\e - \uu_\e) \dx + \int_{\O_\e} \frac{\eta_0}{2} M_0 \uu \cdot (\vc w_\e - \uu_\e) \dx \\
&\qquad + {\tc \e^{3-\a} } \int_{\O_\e} \vc z_\e \cdot (\vc w_\e - \uu_\e) \dx
\nn\ea
with
{\tc \ba
&\e^{3-\a} \int_{\O_\e} \vc z_\e \cdot (\vc w_\e - \uu_\e) \dx \leq \e^{3-\a} \|\vc z_\e\|_{L^2(\O_\e)} \|\vc w_\e - \uu_\e\|_{L^2(\O_\e)} \\
&\leq C \e^{3-\a} \|\nabla(\vc w_\e - \uu_\e)\|_{L^2(\O_\e)} \leq C_\delta \e^{3-\a} + \delta \e^{3-\a} \|\nabla(\vc w_\e - \uu_\e)\|_{L^2(\O_\e)}^2, \nn
\ea
and
\ba
&\int_{\O_\e} \frac{\eta_0}{2} [\e^{3-\a}(-\Delta W_\e + \nabla Q_\e) - M_0] \uu \cdot (\vc w_\e - \uu_\e) \dx \leq C \e \|\vc w_\e - \uu_\e\|_{W^{1,2}(\O_\e)} \\
&\leq C \e \|\nabla(\vc w_\e - \uu_\e)\|_{L^2(\O_\e)} \leq C_\delta \e^{\a-1} + \delta \e^{3-\a} \|\nabla(\vc w_\e - \uu_\e)\|_{L^2(\O_\e)}^2,
\nn \ea
where the last term in either of the above two estimates can be absorbed by the  dissipation for $\delta>0$ small enough.} {\tc Additionally,}
\ba
    \e^{3-\a} \int_{\O_\e} D B_\e(\uu) : D(\vc w_\e - \uu_\e) \dx &\leq C \e^{3-\a} \|(W_\e - \Id):\nabla \uu\|_{L^2(\O_\e)} \|\nabla (\vc w_\e - \uu_\e)\|_{L^2(\O_\e)} \\
    &\leq \delta \e^{3-\a} \|\nabla(\vc w_\e - \uu_\e)\|_{L^2(\O_\e)}^2 + C_\delta \e^{3-\a + 2(\a-1)}. \nn
\ea

{\tc Moreover, the solenoidality of $\vc w_\e$ and $\vu_\e$ leads to }
\ba
- \e^{3-\a} \int_{\O_\e} {\tc \frac{\eta_0}{2} } \nabla (Q_\e \cdot \uu) \cdot (\vc w_\e - \uu_\e) \dx = 0.
\nn
\ea

Hence, we deduce {\tc from \eqref{relE1-stat} that}
\ba
&c \e^{3-\a}  \|\nabla (\uu_\e - \vc w_\e )\|_{L^2(\O_\e)}^2 + c \mathbf{1}_{r>2} \e^{(3-\a)(r-1)}  \|\nabla (\uu_\e - \vc w_\e )\|_{L^r(\O_\e)}^r \\
&\leq C \e^{3-\a} \int_{\O_\e} g_r(|\e^{3-\a}D \vc w_\e|) D \vc w_\e : D(\vc w_\e - \uu_\e) \dx + C \Big( \e^{\a-1} + \e^{3-\a} \Big). 
 \nn \ea

{\tc If $g_r \equiv 0$, we finish the proof of Theorem~\ref{thm-4} already at this stage and we can treat all $1<\a<3$. Therefore, we focus in the sequel on $g_r \not \equiv 0$ and $1<\a<\frac32$.\\

Using \eqref{growthG}, we first see that
\ba
|g_r(s)| &\leq C s \mathbf{1}_{s \leq 1} + C s^{r-2} \mathbf{1}_{r>3} \mathbf{1}_{s \geq 1} + C s^{\max\{r-2, 0\}} \mathbf{1}_{r \leq 3} \mathbf{1}_{s \geq 1} \\
&\leq C s + C s^{r-2} \mathbf{1}_{r>3}.
\nn
\ea

Combining the above estimate with \eqref{est.w_e-stat}, we find
\begin{align*}
&\e^{3-\a} \int_{\O_\e} g_r(|\e^{3-\a} D \vc w_\e|) D \vc w_\e : D(\vc w_\e - \uu_\e) \dx \\
&\leq C \e^{2(3-\a)} \int_{\O_\e} |D\vc w_\e|^2 |D(\uu_\e - \vc w_\e)| \dx \\
&\qquad + C \e^{(3-\a)(r-1)} \mathbf{1}_{r > 3} \int_{\O_\e} |D \vc w_\e|^{r-1} \, |D(\uu_\e - \vc w_\e)| \dx \\
&\leq C \e^{2(3-\a)} \|D\vc w_\e\|_{L^4(\O_\e)}^2 \|D(\uu_\e - \vc w_\e)\|_{L^2(\O_\e)} \\
&\qquad + C \e^{(3-\a)(r-1)} \| D\vc w_\e \|_{L^{2(r-1)}(\O_\e)}^{r-1} \|D(\uu_\e - \vc w_\e)\|_{L^2(\O_\e)} \mathbf{1}_{r>3} \\
& \leq \delta \e^{3 - \a} \|D(\vc w_\e - \uu_\e)\|_{L^2(\O_\e)}^2 + C_\delta \e^{3(3 - \a)} \|D \vc w_\e\|^4_{L^4(\O_\e)} \\
& \qquad +  C_\delta \e^{(2r - 3)(3 - \a)} \|D \vc w_\e\|^{2(r-1)}_{L^{2(r-1)}(\O_\e)} \mathbf{1}_{r>3} \\
& \leq \delta \e^{3 - \a} \|D(\vc w_\e - \uu_\e)\|_{L^2(\O_\e)}^2 + C_\delta \e^{2(3 - 2\a)} + C_\delta \e^{2(3-2\a)(r-2)} \mathbf{1}_{r>3} \\
&\leq \delta \e^{3-\a} \|D(\vc w_\e - \uu_\e)\|_{L^2(\O_\e)}^2 + C_\delta \e^{2(3-2\a)},
\end{align*}
where we used that $r>3 \Leftrightarrow r-2 > 1$.
}

In turn, choosing $\delta>0$ small enough, we arrive at
\ba
&c \e^{3-\a}  \|\nabla (\uu_\e - \vc w_\e )\|_{L^2(\O_\e)}^2 + c \mathbf{1}_{r>2} \e^{(3-\a)(r-1)}  \|\nabla (\uu_\e - \vc w_\e )\|_{L^r(\O_\e)}^r \\
&\leq C \Big( \e^{\a-1} + \e^{3-\a} + {\tc \e^{2(3-2\a)} } \Big)  .
\nn \ea

{\tc Hence, to get the final inequality \eqref{ConvSpeedS}, it is enough to see that
\begin{align*}
&\|\tilde{\uu}_\e - \uu \|_{L^2(\O)}^2 \leq C \big( \|\uu_\e - \vc w_\e\|_{L^2(\O_\e)}^2 + \|\vc w_\e - \uu \|_{L^2(\O)}^2 \big) \\
&\leq C \e^{3-\a} \|\nabla( \uu_\e - \vc w_\e)\|_{L^2(\O_\e)}^2 + C \e^{\a-1} \\
&\leq C \Big(  \e^{\a-1} + \e^{3-\a} + \e^{2(3-2\a)} \Big).
\end{align*}
}
This ends the proof of Theorem~\ref{thm-4}.

{\tc
\begin{remark}
    As can be seen from the above proof, the absorption of dissipative terms only needs $L^2$-estimates. In particular, we may treat viscosities that fulfill \eqref{monG} in the weaker form
    \begin{align*}
        [\eta_r(\beta A)A - \eta_r(\beta B)B] : (A-B) \geq C^{-1} |A-B|^2.
    \end{align*}
\end{remark}
}

\subsection{\tc Adaptions for bounded domains}\label{sec:statBdDom}
\providecommand{\twe}{\tilde{\vc w}_\e}
For a bounded domain $\Omega \subset \R^3$, using $\vc w_\e = W_\e \uu$ as test function in the REI \eqref{relEn} is not allowed anymore due to the mismatch of boundary conditions $\uu_\e |_{\d \O} = 0$ versus $\uu \cdot \vc n |_{\d \O} = 0$. To overcome this drawback, we will introduce a proper boundary layer corrector as in \cite{BalaziAllaireOmnes24} and show the following result:

\begin{theorem}\label{thm-3}
{\tc Let $\Omega$ be a bounded domain of class $C^{3,\mu}$, $\mu\in (0,1)$.} Let
\begin{align*}
r>1, && 1<\a<\frac 32, && \l > \a.
\end{align*}
Let $\eta_r$ comply with \eqref{monG}--\eqref{growthG}, and  $(\uu_\e,p_\e)$ be a weak solution to \eqref{NonNew}, and let $(\uu, p) \in   [W^{1,\infty}(\O) \cap W^{2,2}(\O) {\vc \cap C^{1,\mu}(\O)} ] \times W^{1,\infty}(\O)$ with $\dive \uu = 0$ be a strong solution to Darcy's law \eqref{Darcy}. Then, there exists an $\e_0 > 0$ such that for all $\e \in (0, \e_0)$, we have
\ba\label{ConvSpeedBd}
\|\tilde{\uu}_\e - \uu \|_{L^2(\O)}^2 & \leq C \Big( \e^{\a-1} + {\tc \e^{\frac{3-\a}2}}  
+ \e^{2(3-2\a)} \Big),
\ea
where the constant $C>0$ is independent of $\e$. The last term in \eqref{ConvSpeedBd} can be taken to be zero if {\tc $g_r \equiv 0$, and in this case, we can cover all $1 < \a < 3$}.
 \end{theorem}

\begin{remark}
    {\tc While reviewing the present work, the authors learned that for the stationary Stokes case ($g_r=0$, $\lambda = \infty$) the same convergence rates were recently obtained with a different technique than presented here in \cite{JLP25}. }
\end{remark}

To deal with bounded domains, we use a suitable cutoff of the limit function $\uu$ that preserves the solenoidal property. Therefore, it is useful to do the cutoff on the level of a vector potential of $\uu$.
To this end, we rely on the following Lemma, taken from \cite{BalaziAllaireOmnes24} (see also \cite[Lemma A.1]{Kato83} for the same result for smooth domains).
\begin{lemma}[{\cite[Lemma 4.10]{BalaziAllaireOmnes24}}]
    Let $\Omega \subset \R^3$ be a bounded domain of class $C^{3,\mu}$ for some $\mu \in (0,1)$. {\tc Let $\vc g \in C^{1,\mu}(\Omega;\R^3)$ satisfying $\vc g \cdot \vc n = 0$ on $\partial \Omega$. Then there exists $\vc A = (\mathcal{A}_{ij})_{1\leq i < j \leq 3} \in C^{2,\mu}(\O)$ such that 
    \begin{align*}
        \mathbf{curl}\, \vc A = \vc g \text{ on } \partial \Omega, && \vc A = 0 \text{ on } \partial \Omega.
    \end{align*}}
\end{lemma}
{\tc Here the operator  $\mathbf{curl}$ is defined as
$$\mathbf{curl} \, \vc A :=  \left( \sum_{j=1}^{i-1} \frac{\d \mathcal{A}_{ji}}{\d x_{j}} -  \sum_{j=i+1}^{3} \frac{\d \mathcal{A}_{ij}}{\d x_{j}}   \right)_{1\leq i \leq 3}.$$
In particular, $$
  \dive \mathbf{curl} = 0.
$$}

Note that $\uu$ satisfies the assumptions of the above Lemma. We therefore fix $\vc A$ such that
\begin{align*}
\mathbf{curl} \, \vc A = \uu \text{ on } \d \O, \qquad \vc A |_{\d \O} = 0.
\end{align*}

To lean the notation in the following, we will use the symbol $a \lesssim b$ to indicate that there is a constant $C>0$ independent of $a, b$, and $\e$ such that $a \leq C b$. Now, for $\varpi > 0$, let $\O^\varpi = \{ x \in \Omega : {\rm dist}(x, \d \O) < \varpi \}$ and $\xi^\varpi \in C^2(\O)$ be such that $\xi^\varpi = 1$ on $\O^{\varpi /2}$, $\xi^\varpi = 0$ in $\O \setminus \O^\varpi$, and
\begin{align*}
0\leq \xi^\varpi\leq 1, \qquad  \|\nabla \xi^\varpi\|_{L^\infty(\O)} \lesssim \varpi^{-1}, \qquad \|\nabla^2 \xi^\varpi\|_{L^\infty(\O)} \lesssim \varpi^{-2}.
\end{align*}

Set $\vv^\varpi = {\rm curl}(\xi^\varpi \vc A)$, then
\begin{align*}
\|\vv^\varpi\|_{L^p(\O)} = \|\vv^\varpi\|_{L^p(\O^\varpi)} \lesssim \|\nabla \xi^\varpi\|_{L^\infty(\O)} \|\vc A\|_{L^\infty(\O^\varpi)} |\O^\varpi|^\frac{1}{p} + \|\nabla \vc A\|_{L^\infty(\O)} |\O^\varpi|^\frac{1}{p} \lesssim \varpi^\frac{1}{p},
\end{align*}
where we used that due to $\vc A|_{\d \O} = 0$, we have $\|\vc A\|_{L^\infty(\O^\varpi)} \lesssim \varpi \|\nabla \vc A\|_{L^\infty(\O^\varpi)}$. Moreover,
\begin{align*}
\|\nabla \vv^\varpi\|_{L^p(\O)} &\lesssim \|\nabla^2 \xi^\varpi\|_{L^\infty(\O)} \|\vc A\|_{L^\infty(\O^\varpi)} |\O^\varpi|^\frac{1}{p} + \|\nabla \xi^\varpi\|_{L^\infty(\O)} \|\nabla \vc A\|_{L^\infty(\O^\varpi)} |\O^\varpi|^\frac{1}{p} \\
&\qquad + \|\nabla^2 \vc A\|_{L^\infty(\O)} |\O^\varpi|^\frac{1}{p} \\
&\lesssim \varpi^{-1 + \frac{1}{p}} = \varpi^{- \frac{1}{p'}}.
\end{align*}

We define the test function $\twe$ as
\begin{align*}
\twe = W_\e(\uu - \vv^\varpi) - B_\e(\uu - \vv^\varpi),
\end{align*}
where $\varpi$ will be chosen later dependent on $\e$ such that $\e \leq \varpi \ll 1$.
Note that this yields $\twe  \in W_{0, \dive}^{1,\max\{2,r\}}  (\O_\e)$. In particular, $\twe$ is admissible in the REI \eqref{rel-en-1}.


Using property \eqref{W_O_varpi} of $W_\e$, we deduce, since $\|W_\e\|_{L^\infty} \lesssim 1$, $\e \leq \varpi$, and using the estimate for $B_\e$ from Lemma \ref{lem:Bogovski.W_eps},
\begin{align*}
\|\twe - W_\e \uu\|_{L^p(\O)} &\lesssim \|\vv^\varpi\|_{L^p(\O^\varpi)} + \|B_\e(\vu - \vv^\varpi)\|_{L^p(\O)} \lesssim \varpi^\frac{1}{p} + \e\|(W_\e - \Id):\nabla(\uu - \vv^\varpi)\|_{L^p(\O)} \\
&\lesssim \varpi^\frac{1}{p} + \e\|W_\e - \Id\|_{L^p(\O)} + \e  \|W_\e - \Id\|_{L^{p}(\O^\varpi)} \|\nabla \vv^\varpi\|_{L^\infty(\O)} \\
&\lesssim \varpi^\frac{1}{p} + \e \varpi^{-\frac{1}{p'}} \|W_\e - \Id\|_{L^{p}(\O)} \lesssim \varpi^\frac{1}{p},
\end{align*}
and, for $p> \frac32$, 
\begin{align*}
\|\nabla(\twe - W_\e \uu)\|_{L^p(\O)} &\lesssim \|\nabla (W_\e \vv^\varpi)\|_{L^p(\O^\varpi)} + \|\nabla B_\e(\uu - \vv^\varpi)\|_{L^p(\O)} \\
&\lesssim \|\nabla W_\e\|_{L^p(\O^\varpi)} \|\vv^\varpi\|_{L^\infty(\O)} + \|\nabla \vv^\varpi\|_{L^p(\O)} \\
&\qquad + \|(W_\e - \Id) : \nabla (\uu - \vv^\varpi)\|_{L^p(\O)} \\
&\lesssim \varpi^\frac{1}{p} \|\nabla W_\e\|_{L^p(\O)} + \varpi^{- \frac{1}{p'}} + \|(W_\e - \Id) : \nabla (\uu - \vv^\varpi)\|_{L^p(\O)} \\
&\lesssim \varpi^\frac{1}{p} \e^\frac{(3-p)\a - 3}{p} + \varpi^{- \frac{1}{p'}} + \varpi^{-\frac{1}{p'}} \|W_\e - \Id\|_{L^{p}(\O)} \\
&\lesssim \varpi^\frac{1}{p} \e^\frac{(3-p)\a - 3}{p} + \varpi^{- \frac{1}{p'}}.
\end{align*}\\

We choose now $\varpi = \eps^{\frac {3-\alpha}{2}} \gg \eps$ such that
\begin{align*}
    \|\twe - W_\e \uu\|_{L^2(\O)} & \leq C\eps^{\frac {3-\a}{4}},  \\
    \|\nabla(\twe - W_\e \uu)\|_{L^2(\O)} &\leq C\eps^{\frac {\alpha-3}{4}}, \\
    \|\nabla\twe\|_{L^q(\O)} & \leq C\e^{-\a + \frac{3(\a-1)}{q}} \qquad \text{for all } q \in(3/2,\infty),
\end{align*}
and we point out that the last estimate is the same as for  $\vc w_\e$ in \eqref{est.w_e-stat}.

Inserting $\twe$ into the REI \eqref{relEn} and replacing $\vc f$ with Darcy's law as before gives
\ba
&\int_{\O_{\e}} \e^{3-\a} [ \eta_r(\e^{3-\a} D\uu_\e )D\uu_{\e} -  \eta_r(\e^{3-\a} D \twe ) D \twe ] : D(\uu_\e - \twe ) \dx  \\
&\leq \int_{\O_\e} \e^{3-\a} \eta_r(\e^{3-\a} D \twe) D \twe : (D \twe - D \uu_\e ) \dx  - \e^\l  \int_{\O_\e} (\uu_\e - \twe) \cdot ( \d_t \twe + (\uu_\e \cdot \nabla) \twe) \dx\\
&\qquad + \int_{\O_\e} \frac{\eta_0}{2} M_0 \uu \cdot (\uu_\e - \twe) \dx  . \nn
\ea

First, we see that
\begin{align*}
    \|\uu_\e - \twe\|_{L^{2+\kappa}(\O_\e)}^2 \leq \|\uu_\e - \twe\|_{L^2(\O_\e)}^{2\th} \|\uu_\e - \twe\|_{L^6(\O_\e)}^{2(1-\th)} \lesssim \e^{\th(3-\a)} \|\nabla(\uu_\e - \twe)\|_{L^2(\O_\e)}^2,
\end{align*}
where
\begin{align*}
    0 < \kappa < 4, \qquad 0 < \th < 1, \qquad \frac{1}{2+\kappa} = \frac{\th}{2} + \frac{1-\th}{6} \Rightarrow \th = \frac{4-\kappa}{4 + 2\kappa}.
\end{align*}

Hence,
\ba
&\e^\l \int_{\O_\e} (\uu_\e - \twe) \cdot ( (\uu_\e \cdot \nabla) \twe) \dx  \\
&= \e^\l  \int_{\O_\e} (\uu_\e - \twe) \cdot  ((\twe \cdot \nabla) \twe) \dx  + \e^\l \int_{\O_\e} (\uu_\e - \twe) \cdot ((\uu_\e - \twe) \cdot \nabla) \twe \dx \\
&\leq C \e^\l \|\nabla \twe\|_{L^2(\O_\e)} \|\uu_\e - \twe\|_{L^2(\O_\e)} + C \e^\l \|\nabla \twe\|_{L^\frac{2+\kappa}{\kappa}(  \O_\e)} \|\uu_\e - \twe\|_{L^{2+\kappa}(\O_\e)}^2 \\
& \leq C \e^{\l + \frac{3-\a}{2}} \|\nabla(\uu_\e - \twe)\|_{L^2(\O_\e)} \|\nabla \twe\|_{L^2(\O_\e)} \\
&\qquad + C \e^{\l + \th(3-\a)} \|\nabla(\uu_\e - \twe)\|_{L^2(\O_\e)}^2 (\|\nabla(W_\e \uu)\|_{L^\frac{2+\kappa}{\kappa}(\O_\e)} + \|\nabla(\twe - W_\e \uu)\|_{L^\frac{2+\kappa}{\kappa}(\O_\e)}) \\
& \leq C \e^{\l + \frac{3-\a}{2}} \|\nabla(\uu_\e - \twe)\|_{L^2(\O_\e)} (1 + \e^\frac{\a-3}{2}) \\
&\qquad + C \e^{\l + \th(3-\a)} \|\nabla(\uu_\e - \twe)\|_{L^2(\O_\e)}^2 (\e^{-\a} \e^\frac{3\kappa (\a - 1)}{2+\kappa} + \varpi^\frac{\kappa}{2+\kappa} \e^{-\a} \e^\frac{3\kappa (\a - 1)}{2+\kappa} + \varpi^{-\frac{2}{2+\kappa}}) \\
& \leq C \e^{2\l +\a-3} + \delta \e^{3-\a} \|\nabla(\uu_\e - \twe)\|_{L^2(\O_\e)}^2 \\
&\qquad + C \e^{\l - (1 - \th) (3-\a)} (\e^{-\a} \e^\frac{3\kappa (\a - 1)}{2+\kappa} + \varpi^{-\frac{2}{2+\kappa}}) \e^{3-\a} \|\nabla(\uu_\e - \twe)\|_{L^2(\O_\e)}^2.
\nn \ea

Note that the constant depends on $\kappa$.
For $\kappa \to 0$ (and thus $\theta \to 1$) and the choice $\varpi = \e^\frac{3-\a}{2}$ the last right-hand side term is bounded by
\begin{align*}
    C \e^{\l - \a - o(1)} \e^{3-\a} \|\nabla(\uu_\e - \twe)\|_{L^2(\O_\e)}^2.
\end{align*}
Since $\l > \a$, we can choose $\kappa$ sufficiently small such that (for $\e$ small enough)
this term is bounded by 
\begin{align*}
    \delta \e^{3-\a} \|\nabla(\uu_\e - \twe)\|_{L^2(\O_\e)}^2
\end{align*}
and may therefore be absorbed into the dissipation.

Using the definition of $\eta_r$ in \eqref{def:eta_r} similarly to before, we get, using also that $\dive(\twe - \uu_\e)=0$ and writing $\twe = W_\e \uu + (\twe - W_\e \uu)$,
\ba
& \int_{\O_\e} \e^{3-\a} \eta_r(\e^{3-\a} D \twe) D\twe : D( \twe - \uu_\e) \dx  \\
&= \e^{3-\a} \int_{\O_\e} \eta_0 D (W_\e \uu) : D(\twe - \uu_\e) \dx + \e^{3-\a}  \int_{\O_\e} \eta_0 D (\twe - W_\e \uu) : D(\twe - \uu_\e) \dx  \\
&\qquad + \e^{3-\a}  \int_{\O_\e} g_r(|\e^{3-\a} D\twe|) D \twe : D(\twe - \uu_\e) \dx.
\nn \ea

The first and third right-hand side terms are treated as for the torus case. The second term is estimated by
\begin{align*}
\e^{3-\a}  \int_{\O_\e} \eta_0 D (\twe - W_\e \uu) : D(\twe - \uu_\e) \dx & \leq C\e^{3-\a} \|\nabla(\twe - W_\e \uu)\|_{L^2(\O_\e)} \|\nabla(\twe - \uu_\e)\|_{L^2(\O_\e)} \\
&\leq C\e^{\frac{3-\a}2}  + \delta \e^{3-\a} \|\nabla(\twe - \uu_\e)\|_{L^2(\O_\e)}^2.
\end{align*}
After absorbing into the dissipation and using the Poincar\'e inequality \eqref{Poincare}, we end up with 
\ba
\|\uu_\e - \twe\|_{L^2(\O_\e)}^2 \leq C \e^{3-\a} \|\nabla (\uu_\e - \twe)\|_{L^2(\O_\e)}^2
\leq  C(\e^{\a -1} +  \e^{\frac{3-\a}2} + \e^{2(3-2\a)}). 
\nn \ea

To conclude the estimate for $\uu_\e - \uu$ we observe
\begin{align*}
\|\uu -  \twe\|_{L^2(\O)}^2 \leq C \|(W_\e - \Id) \uu\|_{L^2(\O)}^2 + \|W_\e \uu - \twe\|_{L^2(\O)}^2 
\leq C(\e^{2(\a-1)} + \e^{\frac{3-\a}2}),
\end{align*}
thus
\begin{align*}
    \|\uu - \tilde \uu_\e\|_{L^2(\O)}^2 \leq C \|\uu - \twe \|_{L^2(\O)}^2 + \|\twe - \tilde \uu_\e\|_{L^2(\O)}^2 \leq C(\e^{\a -1} +  \e^{\frac{3-\a}2} + \e^{2(3-2\a)}),
\end{align*}
ending the proof of Theorem~\ref{thm-3}.

\section{Evolutionary NSE}\label{sec:EvolNSE}
In this section, for $T>0$, we consider in $(0,T) \times \O_\e$ the evolutionary ``sister'' of \eqref{NonNew}, namely
\begin{equation}\label{NonNewTime}
\begin{cases}
\e^\l( \d_t \uu_\e + \dive (\uu_\e \otimes \uu_\e) ) -\e^{3-\a} {\rm div}\,\big(\eta_r(\e^{3-\a} D\uu_\e )D\uu_\e  \big) +\nabla p_\e=\mathbf{f} & {\rm in}\ (0,T) \times \O_\e, \\
{\rm div} \,\uu_\e =0 & {\rm in}\  (0,T) \times \O_\e,\\
\uu_\e =0 & {\rm on}\ (0,T) \times \d\O_\e,\\
\uu_\e(0, \cdot) = \uu_{\e 0} & {\rm in} \ \O_\e,
\end{cases}
\end{equation}
where this time $\mathbf{f} \in L^{2}((0,T)\times \O;\R^{3})$, and the initial data are given by
\begin{align}\label{init}
\uu_{\e0} \in L^2(\O_\e), && \dive \uu_{\e0} = 0, && \e^{\frac{\l}{2}}\|\uu_{\e0}\|_{ L^2(\O_\e)} \leq C.
\end{align}
As before, this scaling corresponds to ${\rm Re} = \e^{\l + \a - 3}$, {\tc ${\rm Eu} = \e^{-\l}$,} ${\rm Fr} = \e^\frac{\l}{2}$, with the additional Strouhal number being equal to ${\rm Sr} = 1$. Note that this scaling corresponds also to a rescaling in time as $\hat t = \e^{-\l} t$; thus, in the limit, we shall expect \emph{time-independent} equations by means of a long-time behavior.\\

The notion of weak solutions is similar to the one of Definition \ref{def-weak}:
\begin{defi}\label{def-weak-time}
We say that  $\uu_{\e}$ is a finite energy weak solution of \eqref{NonNewTime} in $(0,T) \times \O_{\e}$ with initial datum $\uu_{\e 0} \in L^2(\O_\e)$, $\dive \uu_{\e 0} = 0$, provided
\begin{itemize}
\item $\uu_{\e}\in  L^2(0,T; W_{0, \rm div}^{1,2}(\O_\e)) \cap L^r(0,T; W_{0, \rm div}^{1,r}(\O_\e)) \cap C_{\rm weak}([0,T];L^2(\O_\e))$;



\item There holds the integral identity for any solenoidal test function $\varphi\in C_c^\infty( [0,T) \times \O_\e;\mathbb{R}^3)$:
\ba\label{eq-weak-time}
& \int_0^T \int_{\O_{\e}} -\e^\l \uu_\e \cdot \d_t \varphi - \e^\l \uu_{\e}\otimes \uu_{\e}:\nabla\varphi + \e^{3-\a} \eta_r(\e^{3-\a} D\uu_\e )D\uu_{\e} : D\varphi 
\dx \dt \\
&\qquad = \int_{\O_\e} \e^\l \uu_{\e 0} \cdot \varphi(0, \cdot) \dx + \int_0^T \int_{\O_\e}\mathbf{f}\cdot\varphi \dx \dt ;
\ea

\item For a.a. $\tau \in (0,T)$, there holds the energy inequality
\ba\label{energy-ineq-time}
&\e^\l \int_{\O_\e} \frac12 |\uu_\e|^2(\tau, \cdot) \dx + \e^{3-\a} \int_0^\tau \int_{\O_\e}\eta_r(\e^{3-\a} D\uu_\e )| D\uu_{\e}|^2 \dx \dt \\
&\leq \e^\l \int_{\O_\e} \frac12 |\uu_{\e 0}|^2 \dx + \int_0^\tau \int_{\O_\e}\mathbf{f}\cdot \uu_{\e}\dx \dt.
\ea

\end{itemize}
\end{defi}
The existence of such weak solutions is know thanks to the pioneer results introduced in Remark~\ref{exis-weak}. 

\subsection{Qualitative homogenization}
Our main theorem in this section reads as follows.
\begin{theorem}\label{thm-2}
Let
\begin{align*}
r > 1, && 1<\a<\frac32, && \l>\a. 
\end{align*}
{\tc Let $\eta_r$ comply with \eqref{monG}--\eqref{growthG}}, let the initial datum $\uu_{\e0} \in L^2(\O_\e)$  satisfy \eqref{init}, and let $(\uu_\e)$ be a finite energy weak solution of equations \eqref{NonNewTime} with initial datum $\uu_{\e0}$. 
Then, $ \tilde \uu_\e \weak \uu$  weakly in $L^2((0,T) \times \O)$,
where the limit $\uu$ satisfies Darcy's law
\begin{align}\label{Darcy-t}
\begin{cases}
\frac{1}{2}\eta_0\uu = M_{0}^{-1}(\mathbf{f}-\nabla p) & \text{in } (0,T) \times \O,\\
\dive \uu = 0 & \text{in } (0,T) \times \O,\\
\uu \cdot \mathbf{n} = 0 & \text{on } (0,T) \times \d \O.
\end{cases}
  \end{align}
  Here, $M_{0}$ is the same permeability tensor as before. {\tc Moreover, if $g_r \equiv 0$, then the above conclusion holds true for any $1 < \a < 3$.}
\end{theorem}
 
 The next two subsections are devoted to the proof of Theorem~\ref{thm-2}. 

\subsubsection{Uniform bounds}\label{sec:evolBds}
From the energy inequality and the boundedness of the initial datum $\e^\frac{\l}{2} \uu_{\e0}$ in $L^2(\O_\e)$, we infer
\begin{align*}
&\e^\l \int_\O \frac12 |\tilde \uu_\e|^2(\tau, \cdot) \dx + \e^{3-\a} \int_0^\tau \int_\O \eta_r(\e^{3-\a} D \tilde \uu_\e) |D\tilde \uu_\e|^2 \dx \dt \\
& \leq C  + \int_0^\tau \int_\O \mathbf{f} \cdot \tilde \uu_\e \dx \dt \leq   C   + C \|\ff\|_{L^2((0,T) \times \O_\e)} \| \uu_\e\|_{L^2((0,T) \times \O_\e)} \\
&\leq C  + C \e^\frac{3-\a}{2} \|D\uu_\e\|_{L^2((0,T) \times \O_\e)} \leq C + {\tc \frac{\eta_0}{2} \e^{3-\a} \int_0^\tau \int_{\O_\e} |D \uu_{\e}|^2 \dx \dt. }
\end{align*}
This implies
\begin{align*}
&\e^\l \int_\O \frac12 |\tilde \uu_\e|^2(\tau, \cdot) \dx + {\tc \e^{3-\a} \int_0^\tau \int_\O \frac{\eta_0}{2} |D\tilde \uu_\e|^2 + g_r(|\e^{3-\a} D \tilde \uu_\e|) |D \tilde \uu_\e|^2 \dx \dt \leq C, }
\end{align*}
hence, {\tc by the monotonicity condition \eqref{monG}, we find}
\ba\label{est-u-f-t0}
\e^\frac{\l}{2} \|\uu_\e\|_{L^\infty(0,T;L^2(\O_\e))} \leq C, && \e^{3-\a} \|D\uu_\e\|_{L^2((0,T) \times \O_\e)}^2 + \e^{(3-\a)(r-1)} \|D \uu_\e\|_{L^r((0,T) \times \O_\e)}^r \mathbf{1}_{r>2} \leq C.
\ea
Consequently, as before,
\ba\label{est-u-f-t}
\e^\frac{3-\a}{2}\|\nabla \tilde \uu_\e\|_{L^2((0,T) \times \O)} \leq C, && \|\tilde \uu_\e\|_{L^2((0,T) \times \O)} \leq C, && \e^\frac{(3-\a)(r-1)}{r} \|\nabla \tilde \uu_\e\|_{L^r((0,T) \times \O)} \mathbf{1}_{r>2} \leq C,
\ea
and, up to subsequences,
\ba\label{est-u-f-t-2}
\tilde \uu_\e \weak \uu \text{ weakly in } L^2((0,T) \times \O) , \qquad \dive\uu = 0 \ \mbox{in} \  (0,T) \times \O, \qquad \uu\cdot \vc n =0 \ \mbox{on} \ (0,T) \times \d \O.
\ea

\subsubsection{Proof of Theorem \ref{thm-2}}\label{sec:evolQual}
Recall the functions $(W_\e, Q_\e)$ {\tc from Proposition}~\ref{lem-local-1}. Let $\phi\in C_c^\infty([0,T) \times \O; \R^3)$ {\tc with $\dive \phi = 0$}. Taking as before $\phi_\e = W_\e \phi - B_\e(\phi)$ as a test function in the weak formulation of \eqref{eq-weak-time} implies that for each $\tau \in [0,T]$,
\ba\label{Pe-eq}
  0 & = \int_0^\tau \int_{\O_{\e}}  \ff \cdot  \phi_{\e} \dx \dt + \e^\l \int_{\O_\e} \uu_{\e 0} \cdot \phi_\e(0, \cdot) \dx + \e^{\l} \int_0^\tau \int_{\O_{\e}} \uu_\e \cdot \d_t \phi_\e + \uu_\e \otimes \uu_\e : \nabla \phi_{\e} \dx \dt  \\
  & \qquad - \e^{3-\a}\frac{\eta_0}{2} \int_0^\tau \int_{\O_{\e}} \nabla \uu_\e : \nabla  \phi_{\e} \dx \dt - \int_0^\tau \int_{\O_{\e}}\e^{3-\a} g_r(|\e^{3-\a} D\uu_\e|) D\uu_\e : \nabla  \phi_{\e} \dx \dt.
  \ea

Again by {\tc Proposition}~\ref{lem-local-1}, there holds
\ba
\int_0^\tau \int_{\O_{\e}} \ff \cdot  \phi_{\e}  \dx \dt = \int_0^\tau \int_{\O} \ff \cdot \phi_{\e}  \dx \dt \to \int_0^\tau \int_{\O} \ff \cdot  \phi \dx \dt. \nn
\ea

From  \eqref{init} and \eqref{est-u-f-t0}, it is straightforward to deduce 
\ba
\int_{\O_{\e}} \e^\l \uu_{\e0}  \cdot  \phi_{\e}(0, \cdot) \dx + \int_0^\tau \int_{\O_\e} \e^\l \uu_\e \cdot \d_t \phi_\e \dx \dt &\leq C \e^{\l} (\|\uu_{\e}\|_{L^{\infty}(0,T;L^{2}(\O_{\e}))} + \|\uu_{\e0}\|_{L^{2}(\O_{\e})} ) \\
&\leq C \e^{\frac \l 2} \to 0. \nn
\ea

By Proposition~\ref{lem-local-1} and \eqref{est-u-f-t}, similar arguments as in \eqref{limit-2} imply
\ba
\e^{\l} \int_0^\tau \int_{\O_{\e}} \uu_\e \otimes \uu_\e : \nabla  \phi_{\e} \dx \dt & \leq C \e^{\l} \|\uu_\e\otimes \uu_\e\|_{L^{1}(0,T; L^{\frac{3}{3-2\th}}(\O_{\e}))} \|\nabla \phi_{\e}\|_{L^{\frac{3}{2\th}}(\O)}  \\
& \leq C \e^{\l - \a - \th (5 - 3\a)}  \to 0, \nn
\ea
where we have chosen  $\th>0$ suitably small and used the following interpolation estimate
\ba
\|\uu_\e\otimes \uu_\e\|_{L^{1}(0,T; L^{\frac{3}{3-2\th}}(\O_{\e}))} &\leq \|\uu_\e \otimes  \uu_{\e} \|_{L^{1}(0,T; L^1(\O_\e))}^{1-\th} \|\uu_\e \otimes \uu_{\e}\|_{L^{1}(0,T; L^3(\O_\e))}^\th \\
&  \leq C \| \uu_\e\|_{L^{2}(0,T; L^2(\O_\e))}^{2(1-\th)}  \| \uu_\e\|_{L^{2}(0,T; L^6(\O_\e))}^{2\th} \leq C \e^{-\th(3-\a)}. \nn
\ea

By \eqref{est-u-f-t-2}, along the lines in \eqref{limit-7}--\eqref{limit-8}, we can deduce
\ba
 & -  \e^{3-\a}\frac{\eta_0}{2} \int_0^\tau \int_{\O_{\e}} \nabla \uu_\e : \nabla  \phi_{\e} \dx \dt \to \frac{\eta_0}{2} \int_0^\tau \int_{\O} M_{0}\phi \cdot \uu \dx \dt. \nn
\ea

{\tc It is left to show that the last term in \eqref{Pe-eq} vanishes as $\e \to 0$. Again, if $g_r \equiv 0$, we finish the proof here and cover all $1 < \a < 3$. Hence, in the sequel we will focus on the case $g_r \not \equiv 0$.

For the last term on the right-hand side of \eqref{Pe-eq}, 
by similar arguments as given in \eqref{limit-11}--\eqref{limit-12}, and using \eqref{est-u-f-t}, {\tc Proposition}~\ref{lem-local-1}, and \eqref{G1}, it holds for $1<r \leq 2$ and any $\kappa \in (0,1)$
\ba
& \e^{3-\a} \Big|  \int_{0}^{\tau} \int_{\O_{\e}} g_r(|\e^{3-\a} D \tilde \uu_\e|) D  \tilde \uu _{\e} : D \phi_{\e} \dx \dt \Big|\\
  &\leq C \e^{(3-\a)(1+\kappa)}  \int_{0}^{\tau}  \int_{\O}|  D  \tilde \uu _{\e} |^{1+\kappa}   |D \phi_{\e} | \dx \dt \\
  &\leq  C \e^{(3-\a)(1+\kappa)} \int_{0}^{\tau}\| | \nabla  \tilde \uu _{\e} |^{1+\kappa} \|_{L^{\frac{2}{1+\kappa}}(\O)}  \| \phi_{\e} \|_{W^{1, \frac{2}{1-\kappa}}(\O)}  \dt \\
 &\leq  C \e^{(3-\a)(1+\kappa)} \|  \nabla  \tilde \uu _{\e}  \|_{L^{1+\kappa} ( 0,T; L^{2}(\O))}^{1+\kappa}   \| \phi_{\e} \|_{W^{1, \frac{2}{1-\kappa}}(\O)}  \\
  & \leq C \e^{(3-\a)(1+\kappa)} \e^{-\frac{(3-\a)}{2} (1+\kappa)} \e^{ -\a + 3(\a - 1) \frac{1-\kappa}{2}}   =  C \e^{(3 - 2\a)\kappa} \to 0, \nn
\ea
and for $r>2$, 
\ba
& \e^{3-\a} \Big| \int_0^\tau \int_{\O} g_r(|\e^{3-\a} D \tilde \uu_\e|) D \tilde \uu _{\e} : D \phi_{\e}) \Big| \dx \dt \\
  &\leq C \e^{(3-2\a)\kappa} + C \e^{(3-\a)(r-1)}  \int_{0}^{\tau}  \int_{\O} |  D  \tilde \uu _{\e} |^{r-1} : |D \phi_{\e} | \dx \dt\\
  &\leq  C \e^{(3-2\a)\kappa} + C \e^{(3-\a)(r-1)}  \int_{0}^{\tau} \| | \nabla  \tilde \uu _{\e} |^{r-1} \|_{L^{\frac{r}{r-1}}(\O)}  \| \phi_{\e} \|_{W^{1, r}(\O)}  \dt \\
  &\leq  C \e^{(3-2\a)\kappa} + C \e^{(3-\a)(r-1)}   \|  \nabla  \tilde \uu _{\e}  \|_{L^r(0,T; L^{r} (\O))}^{r-1}  \| \phi_{\e} \|_{W^{1, r}(\O)}  \\
  & \leq C \e^{(3-2\a)\kappa} + C \e^{(3-\a)(r-1)} \e^{-(3-\a) \frac{(r-1)^2}{r}} \e^{ -\a + \frac{3(\a - 1)}{r}}   =  C \e^{(3-2\a)\kappa} + C \e^{\frac{(3 - 2\a)(r-2)}{r}} \to 0, \nn
\ea
under the assumption $1<\a<\frac{3}{2}$.
}

\medskip

Summing up the above convergences, we finally get for each $\tau \in [0,T]$ that
\ba
 \frac{\eta_0}{2} \int_0^\tau \int_{\O} M_{0} \phi \cdot \uu \dx \dt = \int_0^\tau \int_{\O} \ff \cdot \phi \dx \dt. \nn
\ea

\subsection{Quantitative homogenization}\label{sec:evolQuan}
In this section, we derive a relative energy inequality for the time-dependent setting, and use it to prove the following theorem regarding speed of convergence {\tc in the torus case}:
\begin{theorem}\label{thm-3-evol}
Let {\tc $\O = \mathbb{T}^3$ and}
\begin{align*}
r>1, && 1<\a<\frac 32, && \l > \a.
\end{align*}
{\tc Let $\eta_r$ comply with \eqref{monG}--\eqref{growthG}}. Let $(\uu_\e, p_\e)$ be a weak solution to \eqref{NonNewTime} emanating from the initial datum $\uu_{\e 0} \in L^2(\O_\e)$ satisfying \eqref{init}, and let $(\uu, p) \in   [ W^{1,\infty}(0,T;W^{1,\infty}(\O)) \cap L^\infty(0,T;W^{2,2}(\O)) ] \times L^\infty(0,T;W^{1,\infty}(\O))$ with $\dive \uu = 0$ be a strong solution to Darcy's law \eqref{Darcy-t} with initial datum $ \|\uu(0, \cdot) \|_{L^{2}(\O)} \leq C$. Then, there exists an $\e_0 > 0$ such that for all $\e \in (0, \e_0)$ , we have
\ba\label{ConvSpeed-t}
\|\tilde{\uu}_\e - \uu \|_{L^2((0,T) \times \O)}^2 & \leq C \Big( \e^\l \|\tilde{\uu}_{\e 0} - \uu(0, \cdot)\|_{L^2(\O)}^2 + \e^{\a-1} + \e^{3-\a}  + {\tc \e^{2(3-2\a)} } \Big),
\ea
where the constant $C>0$ is independent of $\e$. The last term in \eqref{ConvSpeed-t} can be taken to be zero if {\tc $g_r \equiv 0$, and in this case, we can cover all $1 < \a < 3$}.
 \end{theorem}

\begin{remark}
Due to $\uu(0,\cdot) \in L^2(\O)$, we may replace the first term on the right-hand side of \eqref{ConvSpeed-t} simply by $\e^\l \|\tilde{\uu}_{\e0}\|_{L^2(\O)}^2$ since
\begin{align*}
\e^\l \|\tilde{\uu}_{\e0} - \uu(0,\cdot)\|_{L^2(\O)}^2 \leq C \e^\l \big( \|\tilde{\uu}_{\e0}\|_{L^2(\O)}^2 + 1 \big) \leq C \e^\l \big( \|\tilde{\uu}_{\e0} - \uu(0,\cdot)\|_{L^2(\O)}^2 + 1 \big),
\end{align*}
and $\e^\lambda \leq \e^\alpha \leq \e^{\alpha -1}$ can be absorbed in the remainder of the right-hand side of \eqref{ConvSpeed-t}.
\end{remark}

The rest of this section is devoted to the proof of Theorem~\ref{thm-3-evol}.

\subsubsection{Relative energy inequality}

From \eqref{eq-weak-time}--\eqref{energy-ineq-time}, we can derive that for any {\tc solenoidal} $\vU\in C^{\infty}([0,T]\times \O_{\e};\R^{3})$ satisfying $\vU|_{\d \O_\e} = 0$, 
\ba
& \int_0^\tau \int_{\O_{\e}} -\e^\l \uu_\e \cdot \d_t \vU  - \e^\l \uu_{\e}\otimes \uu_{\e}:\nabla\vU  + \e^{3-\a} \eta_r(\e^{3-\a} D\uu_\e )D\uu_{\e} : D\vU \dx \dt   \\
&\qquad =  - \int_{\O_\e} \e^\l \big(  \uu_\e(\tau, \cdot) \cdot \vU (\tau, \cdot) - \uu_{\e 0} \cdot \vU (0, \cdot) \big)\dx   +  \int_0^\tau \int_{\O_\e}\mathbf{f}\cdot\vU  \dx \dt, \nn
\ea
which yields
\ba
& \int_0^\tau \int_{\O_{\e}} \e^\l \uu_\e \cdot \d_t \vU  \dx \dt - \left[ \int_{\O_\e} \e^\l \uu_{\e} \cdot \vU  \dx \right]_{t=0}^{t=\tau} \\
&\qquad =  \int_0^\tau \int_{\O_{\e}} \e^{3-\a} \eta_r(\e^{3-\a} D\uu_\e )D\uu_{\e} : D\vU  - \e^\l \uu_{\e}\otimes \uu_{\e}:\nabla\vU \dx \dt - \int_0^\tau \int_{\O_\e}\mathbf{f}\cdot\vU  \dx \dt.
\nn
\nn
\ea

Define the relative energy by
\begin{align*}
E_\e(\uu_\e | \vU ) = \frac12 \e^\l |\uu_\e - \vU |^2, \qquad \forall  \, \vU  \in C^{\infty}([0,T]\times \O;\R^{3}), \qquad {\tc \dive \vU = 0, \qquad \vU|_{\d \O_\e} = 0}.
\end{align*}

It follows that
\ba
\left[ \frac12 \e^\l \int_{\O_\e} |\uu_\e - \vU |^2 \dx \right]_{t=0}^{t=\tau} &= \left[ \frac12 \e^\l \int_{\O_\e} |\uu_\e|^2 \dx \right]_{t=0}^{t=\tau} + \e^\l \int_0^\tau \int_{\O_\e} \vU  \cdot \d_t \vU  \dx \dt - \left[ \e^\l \int_{\O_\e} \uu_\e \cdot \vU  \right]_{t=0}^{t=\tau} \\
&= \left[ \frac12 \e^\l \int_{\O_\e} |\uu_\e|^2 \dx \right]_{t=0}^{t=\tau} - \e^\l \int_0^\tau \int_{\O_\e} (\uu_\e - \vU ) \cdot \d_t \vU  \dx \dt \\
&\qquad + \int_0^\tau \int_{\O_{\e}} \e^{3-\a} \eta_r(\e^{3-\a} D\uu_\e )D\uu_{\e} : D\vU  - \e^\l \uu_{\e}\otimes \uu_{\e}:\nabla\vU  \dx \dt \\
&\qquad - \int_0^\tau \int_{\O_\e}\mathbf{f} \cdot \vU  \dx \dt .
\nn
\ea

By the energy inequality \eqref{energy-ineq-time}, we see
\ba
\e^\l \left[ \int_{\O_\e} \frac12 |\uu_\e|^2 \dx \right]_{t=0}^{t=\tau} \leq - \e^{3-\a} \int_0^\tau \int_{\O_\e} \eta_r(\e^{3-\a} D\uu_\e) |D\uu_\e|^2 \dx \dt + \int_0^\tau \int_{\O_\e} \vc f \cdot \uu_\e \dx \dt,
\nn \ea
hence
\ba
&\left[ \int_{\O_\e} E_\e(\uu_\e | \vU ) \dx \right]_{t=0}^{t=\tau} + \int_0^\tau \int_{\O_{\e}} \e^{3-\a} [ \eta_r(\e^{3-\a} D\uu_\e )D\uu_{\e} -  \eta_r(\e^{3-\a} D\vU  )D\vU  ] : (D\uu_\e - D\vU ) \dx \dt \\
&\leq \int_0^\tau \int_{\O_\e} \e^{3-\a} \eta_r(\e^{3-\a} D \vU ) D \vU  : (D \vU  - D \uu_\e) \dx \dt - \e^\l \int_0^\tau \int_{\O_\e} (\uu_\e - \vU ) \cdot \d_t \vU  \dx \dt \\
&\qquad - \int_0^\tau \int_{\O_\e} \e^\l \uu_{\e}\otimes \uu_{\e}:\nabla\vU  \dx \dt + \int_0^\tau \int_{\O_\e}\mathbf{f}\cdot (\uu_\e - \vU ) \dx \dt. \nn
\ea
Using moreover $\dive \uu_\e = 0$ and $\uu_\e |_{\d \O_\e} = 0$, we may write
\ba
-\int_0^\tau \int_{\O_\e} \e^\l \uu_\e \otimes \uu_\e : \nabla \vU  \dx \dt = -\int_0^\tau \int_{\O_\e} \e^\l ((\uu_\e \cdot \nabla) \vU ) \cdot (\uu_\e - \vU ) \dx \dt \nn
\ea
to get the final {\tc relative energy inequality (REI)}
\ba \label{relEnIn}
&\left[ \int_{\O_\e} E_\e(\uu_\e | \vU ) \dx \right]_{t=0}^{t=\tau} + \int_0^\tau \int_{\O_{\e}} \e^{3-\a} [ \eta_r(\e^{3-\a} D\uu_\e )D\uu_{\e} -  \eta_r(\e^{3-\a} D\vU  )D\vU  ] : D(\uu_\e - \vU ) \dx \dt \\
&\leq \int_0^\tau \int_{\O_\e} \e^{3-\a} \eta_r(\e^{3-\a} D \vU ) D \vU  : (D \vU  - D \uu_\e) - \e^\l \int_0^\tau \int_{\O_\e} (\uu_\e - \vU ) \cdot ( \d_t \vU  + (\uu_\e \cdot \nabla) \vU  ) \dx \dt \\
&\qquad + \int_0^\tau \int_{\O_\e}\mathbf{f}\cdot (\uu_\e - \vU ) \dx \dt .
\ea

Note that inserting $\vU =0$ in the above yields the standard energy inequality \eqref{energy-ineq-time}. Moreover, by density, the REI \eqref{relEnIn} holds for any $\vU $ satisfying $\vU  \in W^{1,2}([0,T],W_{0, \dive}^{1,\max\{2,r\}}  (\O_\e)).$

\subsubsection{Proof of Theorem \ref{thm-3-evol}}

Let {\tc $(W_\e, Q_\e)$ be the functions from Proposition~\ref{lem-local-1}} and $\uu$ be a regular strong solution of Darcy's law \eqref{Darcy-t} as required in Theorem~\ref{thm-3-evol}. Define further {\tc $\vc w_\e = W_\e \uu - B_\e(\uu)$.} Then, by Proposition~\ref{lem-local-1} and Lemma \ref{lem:Bogovski.W_eps}, there holds {\tc similarly as in the stationary case that}
\ba
&\|\vc w_\e\|_{W^{1,\infty}(0,T; L^\infty(\O))}  + {\tc \e^{\a - \frac{3(\a-1)}{q}} \|\nabla \vc w_\e\|_{L^q((0,T) \times \O)}} \leq C, \ {\tc q \in  (3/2, \infty),} \\
& \|\vc w_\e - \uu\|_{L^\infty(0,T;L^q(\O_\e))} \leq C \e^{\min\{1, \frac{3}{q} \} (\a - 1)} \ \forall \, q \geq  1, \ {\tc q \neq 3}, \qquad \vc w_\e |_{\d \O_\e} = 0. \nn
\ea

We use {\tc $\vU = W_\e \uu - B_\e(\uu) = \vc w_\e$} as test function in the REI \eqref{relEnIn} to obtain
\ba
&\left[ \int_{\O_\e} E_\e(\uu_\e | \vc w_\e) \dx \right]_{t=0}^{t=\tau} + \int_0^\tau \int_{\O_{\e}} \e^{3-\a} [ \eta_r(\e^{3-\a} D\uu_\e )D\uu_{\e} -  \eta_r(\e^{3-\a} D\vc w_\e )D\vc w_\e ] : D(\uu_\e - \vc w_\e ) \dx \dt \\
&\leq \int_0^\tau \int_{\O_\e} \e^{3-\a} \eta_r(\e^{3-\a} D \vc w_\e) D\vc w_\e : (D \vc w_\e - D \uu_\e ) \dx \dt \\
&\qquad - \e^\l \int_0^\tau \int_{\O_\e} (\uu_\e - \vc w_\e) \cdot ( \d_t \vc w_\e + (\uu_\e \cdot \nabla) \vc w_\e) \dx \dt + \int_0^\tau \int_{\O_\e}\mathbf{f}\cdot (\uu_\e - \vc w_\e) \dx \dt . \nn
\ea

Note especially that, for the torus case, this choice of $\vU$ is a valid one due to {\tc $\dive \vU = 0$ and} $\vU = \vc w_\e = 0$ on $\d \O_\e$. {\tc Moreover, as $\uu$ is a regular solution to Darcy's law, we have $\vc w_\e \in W^{1,2}(0,T; W_{0, \dive}^{\max\{2,r\}}(\O_\e))$.}

\medskip

Using Darcy's law \eqref{Darcy-t} {\tc and solenoidality of $\uu_\e$ and $\vc w_\e$ similar to before, we rewrite the force term and insert the result into the REI to obtain}
\ba\label{rela-ener-1}
&\left[ \int_{\O_\e} E_\e(\uu_\e | \vc w_\e) \dx \right]_{t=0}^{t=\tau} + \int_0^\tau \int_{\O_{\e}} \e^{3-\a} [ \eta_r(\e^{3-\a} D\uu_\e )D\uu_{\e} -  \eta_r(\e^{3-\a} D\vc w_\e )D\vc w_\e ] : D(\uu_\e - \vc w_\e ) \dx \dt \\
&\leq \int_0^\tau \int_{\O_\e} \e^{3-\a} \eta_r(\e^{3-\a} D \vc w_\e) D\vc w_\e : (D \vc w_\e - D \uu_\e ) \dx \dt \\
& \qquad - \e^\l \int_0^\tau \int_{\O_\e} (\uu_\e - \vc w_\e) \cdot ( \d_t \vc w_\e + (\uu_\e \cdot \nabla) \vc w_\e) \dx \dt  + \int_0^\tau \int_{\O_\e} \frac{\eta_0}{2} M_0 \uu \cdot (\uu_\e - \vc w_\e) \dx \dt.
\ea

{\tc Again, the dissipation is bounded from below due to \eqref{monG} and Korn's inequality by
\begin{align*}
    &\int_0^\tau \int_{\O_{\e}} \e^{3-\a} [ \eta_r(\e^{3-\a} D\uu_\e )D\uu_{\e} -  \eta_r(\e^{3-\a} D\vc w_\e )D\vc w_\e ] : D(\uu_\e - \vc w_\e ) \dx \dt \\
    &\geq c \e^{3-\a}  \|\nabla (\uu_\e - \vc w_\e )\|_{L^2((0,T) \times \O_\e)}^2 + c \mathbf{1}_{r>2} \e^{(3-\a)(r-1)}  \|\nabla (\uu_\e - \vc w_\e )\|_{L^r((0,T) \times \O_\e)}^r.
\end{align*}
}

The second term on the right-hand side of \eqref{rela-ener-1} may be split as
\ba
&\e^\l \int_0^\tau \int_{\O_\e} (\uu_\e - \vc w_\e) \cdot ( \d_t \vc w_\e + (\uu_\e \cdot \nabla) \vc w_\e) \dx \dt \\
&= \e^\l \int_0^\tau \int_{\O_\e} (\uu_\e - \vc w_\e) \cdot ( \d_t \vc w_\e + (\vc w_\e \cdot \nabla) \vc w_\e) \dx \dt + \e^\l \int_0^\tau \int_{\O_\e} (\uu_\e - \vc w_\e) \cdot ((\uu_\e - \vc w_\e) \cdot \nabla) \vc w_\e \dx \dt\\
&\leq C \e^\l {\tc(1 + \|\nabla \vc w_\e\|_{L^2((0,T) \times \O_\e)})} \|\uu_\e - \vc w_\e\|_{L^2((0,T) \times \O_\e)} \\
&\qquad + C \e^\l {\tc \|\nabla (W_\eps \uu)\|_{L^\infty((0,T) \times \O_\e)} } \|\uu_\e - \vc w_\e\|_{L^2((0,T) \times \O_\e)}^2 \\
& \qquad {\tc + \|\nabla B_\eps(\uu)\|_{L^\infty(0,T;L^3(\O_\e))} \|\uu_\e - \vc w_\e\|_{L^2((0,T) \times \O_\e)} \|\nabla(\uu_\e - \vc w_\e)\|_{L^2((0,T) \times \O_\e)}} \\
& \leq C_\delta \e^{\a-1} + \delta \e^{3-\a} \|\nabla (\uu_\e - \vc w_\e)\|_{L^2((0,T) \times \O_\e)}^2,
\nn \ea
{\tc where we used the same reasoning as in the stationary case (see before \eqref{relE1-stat}), and the fact that $\|\nabla \vc w_\e\|_{L^2(\O_\e)} \leq C \e^{(\a-3)/2} \gg 1$}. Thus, for $\e>0$ {\tc and $\delta > 0$} small enough,
\ba \label{relE1}
&\left[ \int_{\O_\e} E_\e(\uu_\e | \vc w_\e) \dx \right]_{t=0}^{t=\tau} + c \e^{3-\a}  \|\nabla (\uu_\e - \vc w_\e )\|_{L^2((0,T) \times \O_\e)}^2 \\
&\qquad + c \mathbf{1}_{r>2} \e^{(3-\a)(r-1)}  \|\nabla (\uu_\e - \vc w_\e )\|_{L^r((0,T) \times \O_\e)}^r  \\
&\leq \int_0^\tau \int_{\O_\e} \e^{3-\a} \eta_r(\e^{3-\a} D \vc w_\e) D\vc w_\e : (D \vc w_\e - D \uu_\e ) \dx \dt \\
&\qquad + \int_0^\tau \int_{\O_\e} \frac{\eta_0}{2} M_0 \uu \cdot (\uu_\e - \uu) \dx \dt + C \e^{\a-1}. \ea

{\tc The remaining steps are very similar to the stationary case. In turn, we just sketch them.} Using the definition of $\eta_r$, we split as before
\ba
&\int_0^\tau \int_{\O_\e} \e^{3-\a} \eta_r(\e^{3-\a} D \vc w_\e) D\vc w_\e : D( \vc w_\e - \uu_\e) \dx \dt \\
&= \e^{3-\a} \int_0^\tau \int_{\O_\e} \frac{\eta_0}{2} (- \Delta (W_\e \uu) + \nabla (Q_\e \cdot \uu)) \cdot (\vc w_\e - \uu_\e) \dx \dt \\
&\qquad - \e^{3-\a} \int_0^\tau \int_{\O_\e} {\tc \frac{\eta_0}{2} } \nabla (Q_\e \cdot \uu) \cdot (\vc w_\e - \uu_\e) \dx \dt \\
&\qquad + \e^{3-\a} \int_0^\tau \int_{\O_\e} g_r(|\e^{3-\a} D \vc w_\e|) D \vc w_\e : D(\vc w_\e - \uu_\e) \dx \dt {\tc - \int_0^\tau \int_{\O_\e} \eta_0 D B_\e(\uu) : D(\vc w_\e - \uu_\e) \dx \dt}.
\nn \ea

{\tc As for the first integral, we see
\ba
&- \Delta (W_\e \uu) + \nabla (Q_\e \cdot \uu) = (-\Delta W_\e + \nabla Q_\e) \uu + \vc z_\e,\\
&\vc z_\e = (\Delta W_\e) \uu  - \Delta (W_\e \uu) + (Q_\e \cdot \nabla) \uu , \qquad \|\vc z_\e\|_{L^2((0,T) \times \O_\e)} \leq C \e^\frac{\a - 3}{2}.
\nn \ea

For the second integral, the divergence free condition for $\vc w_\e$ and $\vu_\e$ gives 
\ba
- \e^{3-\a} \int_0^\tau \int_{\O_\e} {\tc \frac{\eta_0}{2} } \nabla (Q_\e \cdot \uu) \cdot (\vc w_\e - \uu_\e) \dx \dt = 0.
\nn
\ea

From {\tc Proposition~\ref{lem-local-1}} we have
\ba
\|\e^{3-\a}( -\Delta W_\e + \nabla Q_\e) - M_0\|_{W^{-1,2}(\O_\e)} \leq C \e.
\nn\ea
Hence, we may write
\ba
&\e^{3-\a} \int_0^\tau \int_{\O_\e} \frac{\eta_0}{2} (- \Delta (W_\e \uu) + \nabla (Q_\e \cdot \uu)) \cdot (\vc w_\e - \uu_\e) \dx \dt \\
&= \int_0^\tau \int_{\O_\e} \frac{\eta_0}{2} [\e^{3-\a}(-\Delta W_\e + \nabla Q_\e) - M_0] \uu \cdot (\vc w_\e - \uu_\e) \dx \dt + \int_0^\tau \int_{\O_\e} \frac{\eta_0}{2} M_0 \uu \cdot (\vc w_\e - \uu_\e) \dx \dt \\
&\qquad + {\tc \e^{3-\a} } \int_0^\tau \int_{\O_\e} \vc z_\e \cdot (\vc w_\e - \uu_\e) \dx \dt
\nn\ea
with
\ba
{\tc \e^{3-\a} } \int_0^\tau \int_{\O_\e} \vc z_\e \cdot (\vc w_\e - \uu_\e) \dx \dt & \leq C \e^{3-\a} \|\vc z_\e\|_{L^2((0,T) \times \O_\e)} \|\vc w_\e - \uu_\e\|_{L^2((0,T) \times \O_\e)}  \\
& \leq C \e^{3-\a} \|\nabla(\vc w_\e - \uu_\e)\|_{L^2((0,T) \times \O_\e)}\\
& \leq C_\de {\tc  \e^{3-\a}  + \delta \e^{3-\a} \|\nabla(\vc w_\e - \uu_\e)\|_{L^2((0,\tau) \times \O_\e)}^2},
\nn\ea
and
\ba
&\int_0^\tau \int_{\O_\e} \frac{\eta_0}{2} [\e^{3-\a}(-\Delta W_\e + \nabla Q_\e) - M_0] \uu \cdot (\vc w_\e - \uu_\e) \dx \dt \\
& \leq C \e \|\nabla(\vc w_\e - \uu_\e)\|_{L^2((0,T) \times \O_\e)} \\
&\leq C_\delta \e^{\a-1} + \delta \e^{3-\a} \|\nabla(\vc w_\e - \uu_\e)\|_{L^2((0,T) \times \O_\e)}^2,
\nn \ea
where the last term in either of the above two estimates can be absorbed by the dissipation.} Hence, we deduce {\tc from \eqref{relE1} that}
\ba
&\left[ \int_{\O_\e} E_\e(\uu_\e | \vc w_\e) \dx \right]_{t=0}^{t=\tau} +  c \e^{3-\a}  \|\nabla (\uu_\e - \vc w_\e )\|_{L^2((0,T) \times \O_\e)}^2 \\
&\qquad + c \mathbf{1}_{r>2} \e^{(3-\a)(r-1)}  \|\nabla (\uu_\e - \vc w_\e )\|_{L^r((0,T) \times \O_\e)}^r \\
&\leq C \e^{3-\a} \int_0^\tau \int_{\O_\e} g_r(|\e^{3-\a}D \vc w_\e|) D \vc w_\e : D(\vc w_\e - \uu_\e) \dx \dt + C \big( \e^{\a-1} + \e^{3-\a} \big). 
 \nn \ea

{\tc Using \eqref{growthG} and \eqref{est.w_e-stat}, we deduce the same way as in Section~\ref{sec:statQuan1} that
\begin{align*}
\e^{3-\a} \int_0^\tau \int_{\O_\e} g_r(|\e^{3-\a} D \vc w_\e|) D \vc w_\e : D(\vc w_\e - \uu_\e) \leq \delta \e^{3 - \a} \|D(\vc w_\e - \uu_\e)\|_{L^2((0,T) \times \O_\e)}^2 + C_\delta \e^{2(3-2\a)}.
\end{align*}
}

In turn, choosing $\delta>0$ small enough,
we arrive at{\tc
\ba
&\left[ \int_{\O_\e} E_\e(\uu_\e | \vc w_\e) \dx \right]_{t=0}^{t=\tau}  \\
& \qquad +  c \e^{3-\a}  \|\nabla (\uu_\e - \vc w_\e )\|_{L^2((0,T) \times \O_\e)}^2 + c \mathbf{1}_{r>2} \e^{(3-\a)(r-1)}  \|\nabla (\uu_\e - \vc w_\e )\|_{L^r((0,T) \times \O_\e)}^r \\
&\leq C \Big( \e^{\a-1} + \e^{3-\a} + \e^{2(3-2\a)} \Big)  .
\nn \ea
}

To get the final inequality \eqref{ConvSpeed-t}, it is enough to see that
\begin{align*}
&\|\tilde{\uu}_\e - \uu \|_{L^2((0,T) \times \O)}^2 \leq C \big( \|\uu_\e - \vc w_\e\|_{L^2((0,T) \times \O_\e)}^2 + \|\vc w_\e - \uu \|_{L^2((0,T) \times \O)}^2 \big) \\
&\leq C \e^{3-\a} \|\nabla( \uu_\e - \vc w_\e)\|_{L^2((0,T) \times \O_\e)}^2 + C \e^{\a-1} \\
&\leq C \int_0^\tau \int_{\O_\e} \e^{3-\a} [ \eta_r(\e^{3-\a} D\uu_\e )D\uu_{\e} -  \eta_r(\e^{3-\a} D\vc w_\e )D\vc w_\e ] : D(\uu_\e - \vc w_\e ) \dx \dt + C \e^{\a-1} \\
&\leq \int_{\O_\e} E_\e(\uu_\e | \vc w_\e)(0) \dx + C \Big(  \e^{\a-1} + \e^{3-\a} + {\tc \e^{2(3-2\a)} } \Big),
\end{align*}
as well as
\begin{align*}
\int_{\O_\e} E_\e(\uu_\e | \vc w_\e)(0) \dx &= \int_{\O} \frac12 \e^\l |\tilde{\uu}_{\e 0} - \vc w_\e(0,\cdot)|^2 \dx \\
&\leq C \e^\l \|\tilde{\uu}_{\e 0} - \uu (0,\cdot)\|_{L^2(\O)}^2 + C \e^\l \|(W_\e - \mathbb{I}) \uu (0,\cdot)\|_{L^2(\O)}^2 \\
&\leq C \e^\l \|\tilde{\uu}_{\e 0} - \uu (0,\cdot)\|_{L^2(\O)}^2 + C  \e^{\l} \| \uu (0,\cdot)\|_{L^2(\O)}^2 \|(W_\e - \mathbb{I}) \|_{L^\infty(\O)}^2\\
&\leq C \e^\l \|\tilde{\uu}_{\e 0} - \uu (0,\cdot)\|_{L^2(\O)}^2 + C \e^{\l}.
\end{align*}
{\tc Seeing that due to $\l > \a$, we have $\e^\l < \e^{\a-1}$,} this ends the proof of Theorem~\ref{thm-3-evol}.

\subsection{Adaptions for bounded domains}\label{sec:evolBdDom}
The analogue for the stationary case Theorem~\ref{thm-3} reads for the evolutionary system as follows:
\begin{theorem}\label{thm-3-ev-bd}
{\tc Let $\Omega$ be a bounded domain of class $C^{3,\mu}$, $\mu\in (0,1)$.} Let
\begin{align*}
r>1, && 1<\a<\frac 32, && \l > \a.
\end{align*}
{\tc Let $\eta_r$ comply with \eqref{monG}--\eqref{growthG}}.  Let $(\uu_\e, p_\e)$ be a weak solution to \eqref{NonNewTime} emanating from the initial datum $\uu_{\e 0} \in L^2(\O_\e)$ satisfying \eqref{init}, and let $(\uu, p) \in   [ W^{1,\infty}(0,T;W^{1,\infty}(\O)) \cap L^\infty(0,T;W^{2,2}(\O)) {\vc \cap L^\infty(0,T;C^{1,\mu}(\O))} ] \times L^\infty(0,T;W^{1,\infty}(\O))$ with $\dive \uu = 0$ be a strong solution to Darcy's law \eqref{Darcy-t} with initial value $ \|\uu(0, \cdot) \|_{L^{2}(\O)} \leq C$. Then, there exists an $\e_0 > 0$ such that for all $\e \in (0, \e_0)$ , we have
\ba
\|\tilde{\uu}_\e - \uu \|_{L^2((0,T) \times \O)}^2 & \leq C \Big( \e^\l \|\tilde{\uu}_{\e 0} - \uu(0, \cdot)\|_{L^2(\O)}^2 + \e^{\a-1} + {\tc \e^\frac{3-\a}{2} + \e^{2(3-2\a)} } \Big), \nn
\ea
where the constant $C>0$ is independent of $\e$. The last term in \eqref{ConvSpeed-t} can be taken to be zero if {\tc $g_r \equiv 0$, and in this case, we can cover all $1 < \a < 3$}.
 \end{theorem}

{\tc The proof of Theorem~\ref{thm-3-ev-bd} follows the very same lines as in the stationary case. Since no new ideas appear here, we leave the details to the reader.}

\section{\tc Pressure convergence}\label{sec:pressure}
Having shown the convergence of velocities, we want to have a look what happens to the pressures. 
Recall that \eqref{NonNew} is given by
\ba\label{pe-1}
 \nabla p_\e = \e^{3-\a} \dive \big(\eta_r(\e^{3-\a} D\uu_\e )D\uu_\e  \big) - \e^\l\dive (\uu_\e \otimes \uu_\e)   +  \mathbf{f}, \qquad  {\rm in}\ \mathcal{D}'(\O_\e).
\ea
Similar to the convergence of velocities, we will show the following results:
\begin{theorem}[Stationary case]
    Under the assumptions of Theorem~\ref{thm-1}, there exists an associated pressure $p_\e \in L^2(\Omega_\e)$, such that $(\uu_\e,p_\e)$ is a distributional solution to \eqref{NonNew}. The extension of the pressure can be decomposed as $\tilde p_\e = \tilde p_\e^{(1)} + \tilde p_\e^{\rm res}$, where $\tilde p_\e^{(1)} \weak p$ weakly in $L^2(\O)$, and $\tilde p_\e^{\rm res} \to 0$ strongly in $L^q(\O)$ for some $q>1$. The limit $p$ is the pressure in Darcy's law \eqref{Darcy}. Furthermore, under the assumptions of Theorem~\ref{thm-4}, we have
    \begin{align}\label{tlak}
        \|\tilde{p}_\e - p\|_{L^{1}(\O)} \leq C (\e^\frac{\a-1}{2} + \e^\frac{3-\a}{2} + \e^{(\l-\a)^-} + \e^{(3-2\a)^-} + \e^\frac{(3-2\a)(r-2)}{r} \mathbf{1}_{r>2}).
    \end{align}
    Here, we denote by $s^-$ any number smaller than but arbitrarily close to $s$. Lastly, the last to terms in \eqref{tlak} are taken to be zero if $g_r \equiv 0$.
\end{theorem}

\begin{theorem}[Evolutionary case]
    Under the assumptions of Theorem~\ref{thm-2}, there exists a function $P_\e \in C(0,T; L_0^2(\O_\e))$ such that $(\uu_\e,\partial_t P_\e)$ is a distributional solution to \eqref{NonNewTime}. Moreover, its extension can be decomposed as $\tilde P_\e = \tilde P_\e^{(1)} + \tilde P_\e^{\rm res}$, where $\tilde P_\e^{(1)} \weak^\ast P$ weakly-$\ast$ in $L^\infty(0,T; L^2(\O))$, and $\tilde P_\e^{\rm res} \to 0$ strongly in $L^\infty(0,T; L^q(\O))$ for some $q>1$. Moreover, the limit $p = \partial_t P$ is the pressure in Darcy's law \eqref{Darcy-t}. Furthermore, under the assumptions of Theorem~\ref{thm-3-evol}, we have
    \begin{align}\label{tlak.evol}
        \|\partial_t \tilde{P}_\e - p\|_{W^{-1,2}(0,T; L^{1}(\O))} \leq C (\e^\frac{\a-1}{2} + \e^\frac{3-\a}{2} + \e^{(\l-\a)^-} + \e^{(3-2\a)^-} + \e^\frac{(3-2\a)(r-2)}{r} \mathbf{1}_{r>2}).
    \end{align}
    Here, we denote by $s^-$ any number smaller than but arbitrarily close to $s$. Lastly, the last to terms in \eqref{tlak} are taken to be zero if $g_r \equiv 0$.
\end{theorem}

\begin{remark}[Quantitative estimates in bounded domains]
    Under the assumptions of Theorem~\ref{thm-3} and \ref{thm-3-ev-bd}, respectively, the above statements remain valid up to replacing  $\e^\frac{3-\a}{2}$ by $\e^\frac{3-\a}{4}$ in \eqref{tlak} and \eqref{tlak.evol}, respectively.
    The proof of these adaptations to bounded domains follows the lines of Section \ref{sec:statBdDom} and will be skipped.
\end{remark}

\begin{remark}
    In \eqref{tlak} and \eqref{tlak.evol}, the space $L^1(\Omega)$ can be replaced by $L^s(\Omega)$ for some $s>1$ in the following sense: For any $\delta > 0$, there exists $s > 1$ such that the estimate holds upon setting $(\lambda - \alpha)- = \lambda - \alpha - \delta$ and $(3 - 2 \alpha)- =   3 - 2 \alpha - \delta$. The exact value of $s$ can be taken from the proof in Section~\ref{sec:pQuan}.
\end{remark}

The following sections are therefore devoted to uniform bounds, qualitative and quantitative convergences, respectively, for both stationary and evolutionary NSE.

\subsection{Uniform bounds}\label{sec:pressBds}
\subsubsection{Stationary case}\label{sec:pBdsStat}
{\tc A suitable  way to extend the pressure} to the whole of $\O$ is by duality as it was given by Allaire in \cite{All90-1} for the case $q=2$, and then generalized by the results of \cite{Lu21} to the range $\frac{3}{2} < q < 3$. Such a duality argument applies also in our case, however, due to the restriction $q>\frac{3}{2}$, we would get a worse range for $\l$. To overcome this drawback, we employ the Bogovski\u{\i} operator given in Proposition~\ref{Bog-op} to show directly the estimates of $p_{\e}$ for $q \approx 1$ (but still $q>1$).

Given any $\phi \in C_{c}^{\infty}(\O_{\e})$, we apply the Bogovski\u{\i} operator in Proposition \ref{Bog-op} to define 
\ba\label{Bog-phi}
\Phi_\eps = \mathcal{B}_\e (\phi - \langle \phi\rangle_{\O_{\e}} ), 
\ea
where the notation $\langle \phi\rangle_{\O_{\e}} $ stands for the average of $\phi$ on $\O_{\e}$:
 $$
 \langle \phi\rangle_{\O_{\e}}  = \frac{1}{|\O_{\e}|}\int_{\O_{\e}} \phi \, \dx.
 $$
Clearly, by Proposition \ref{Bog-op}, $\Phi_\eps \in W_{0}^{1,q}(\O_{\e};\R^{3})$ for any $1<q<\infty$ with estimates
\ba\label{Bog-phi-est}
\|\Phi_\eps\|_{W^{1,q}_0(\O_\e)} \leq C \big(1 + \e^{\frac{(3-q)\a-3}{q}}\big) \|\phi\|_{L^q(\O_\e)},  \mbox{ for all } \ 1<q<\infty.
\ea
The idea is to use $\Phi_\eps$ as a test function in \eqref{pe-1} to derive the estimates of $p_{\e}$. Notice that $p_{\e}$ is of zero average, so there holds
\ba\label{pe-2}
\langle  \nabla p_\e, \Phi_\eps\rangle_{\O_{\e}} = - \langle p_\e, \dive \Phi_\eps\rangle_{\O_{\e}}  = - \langle p_\e,  \phi - \langle \phi\rangle_{\O_{\e}} \rangle_{\O_{\e}}  =  - \langle p_\e,  \phi   \rangle_{\O_{\e}} .
\ea
As a result, 
\ba\label{pe-3}
 \langle p_\e,  \phi   \rangle_{\O_{\e}}  & =  - \langle  \nabla p_\e, \Phi_\eps\rangle_{\O_{\e}} \\
 & =  - \langle  \e^{3-\a} \dive \big(\eta_r(\e^{3-\a} D\uu_\e )D\uu_\e  \big) - \e^\l\dive (\uu_\e \otimes \uu_\e)   +  \mathbf{f}, \Phi_\eps\rangle_{\O_{\e}} \\
 & =  \e^{3-\a} \langle \eta_r(\e^{3-\a} D\uu_\e )D\uu_\e , \nabla \Phi_\eps\rangle_{\O_{\e}} - \e^\l \langle \uu_\e \otimes \uu_\e  , \nabla \Phi_\eps\rangle_{\O_{\e}}  - \langle   \mathbf{f}, \Phi_\eps\rangle_{\O_{\e}} \\
 &= \e^{3-\a} \langle \eta_0 D\uu_\e , \nabla \Phi_\eps\rangle_{\O_{\e}} + \e^{3-\a} \langle g_r(|\e^{3-\a} D\uu_\e|) D\uu_\e , \nabla \Phi_\eps\rangle_{\O_{\e}} \\
 &\qquad - \e^\l \langle \uu_\e \otimes \uu_\e  , \nabla \Phi_\eps\rangle_{\O_{\e}}  - \langle   \mathbf{f}, \Phi_\eps\rangle_{\O_{\e}}.
\ea

Clearly, if $g_r \equiv 0$, the second term vanishes, which will give us the full range $1 < \a < 3$ instead of $1 < \a < \frac32$ similarly as before. We now show the estimates of the right-hand side of \eqref{pe-3} term by term. Recalling the uniform estimates of $\uu_{\e}$ in \eqref{est-u-f} as
\ba
\e^{\frac{3-\a}{2}} \|\nabla \tilde \uu_{\e}\|_{L^2(\O)}  \leq C, \qquad \| \tilde \uu_{\e}\|_{L^2(\O )} \leq C , \qquad \e^{\frac{(3-\a)(r-1)}{r}} \|\nabla \tilde \uu_{\e}\|_{L^{r}(\O)} \mathbf{1}_{r>2} \leq  C , \nn
\ea
the first term on the right-hand side of \eqref{pe-3} satisfies
\ba\label{pe-5}
  \e^{3-\a} | \langle \eta_0 D\uu_\e , \nabla \Phi_\eps\rangle_{\O_{\e}} | & \leq  C   \e^{3-\a} \| D\uu_\e \|_{L^{2}(\O_{\e})} \| \nabla \Phi_\eps \|_{L^{2}(\O_{\e})}  \\\
  & \leq C  \e^{\frac{3-\a}{2}} \| \nabla \Phi_\eps \|_{L^{2}(\O_{\e})}.
\ea
Moreover, from \eqref{Bog-phi-est}, we have
\ba\label{pe-6}
 \| \nabla \Phi_\eps \|_{L^{2}(\O_{\e})}  \leq C (1 + \e^{\frac{\a - 3}{2} } )\| \phi \|_{L^{2}(\O_{\e})} .
\ea
From \eqref{pe-5} and \eqref{pe-6} we deduce
\ba\label{pe-f1}
 \e^{3-\a} | \langle \eta_0 D\uu_\e , \nabla \Phi_\eps\rangle_{\O_{\e}} |  \leq C   \|  \phi \|_{L^{2}(\O_{\e})}.
\ea

\medskip

{\tc
If $1<r<2$, similarly to \eqref{limit-11} the second term in \eqref{pe-3} is estimated as
\begin{align*}
    \e^{3-\a} \langle g_r(|\e^{3-\a} D\uu_\e|) D\uu_\e , \nabla \Phi_\eps\rangle_{\O_{\e}} &\leq C \e^{(3-\a)(1+\kappa)} \e^{-\frac{3-\a}{2}(1+\kappa)} \|\nabla \Phi_\eps \|_{L^\frac{2}{1-\kappa}(\O_\e)} \\
    &\leq C \e^{\frac12 (3-\a)(1+\kappa)} \Big( 1 + \e^\frac{(3(1-\kappa) - 2)\a - 3(1-\kappa)}{2} \Big) \|\phi\|_{L^\frac{2}{1-\kappa}(\O_\e)} \\
    &\leq C \e^{\frac12 (3-\a)(1+\kappa)} \e^{-\frac{3-\a + 3\kappa (\a-1)}{2}} \|\phi\|_{L^\frac{2}{1-\kappa}(\O_\e)} \\
    &\leq C \e^{(3-2\a) \kappa} \|\phi\|_{L^\frac{2}{1-\kappa}(\O_\e)},
\end{align*}
for any $\kappa \in (0,1)$.

\medskip

If $r>2$, then similarly to \eqref{limit-12}
\begin{align*}
    \e^{3-\a} \langle g_r(|\e^{3-\a} D\uu_\e|) D\uu_\e , \nabla \Phi_\eps\rangle_{\O_{\e}} &\leq C \e^{(3-2\a)\kappa} \|\phi\|_{L^\frac{2}{1-\kappa}(\O_\e)} + C \e^\frac{(3-\a)(r-1)}{r} \|\nabla \Phi_\eps\|_{L^r(\O_\e)} \\
    &\leq C \e^{(3-2\a)\kappa} \|\phi\|_{L^\frac{2}{1-\kappa}(\O_\e)} + C \e^\frac{(3-\a)(r-1)}{r} \e^\frac{(3-r)\a-3}{r} \|\phi\|_{L^r(\O_\e)} \\
    &\leq C \e^{(3-2\a)\kappa} \|\phi\|_{L^\frac{2}{1-\kappa}(\O_\e)} + C \e^\frac{(3-2\a)(r-2)}{r} \|\phi\|_{L^r(\O_\e)},
\end{align*}
for any $\kappa \in (0,1)$, where we also used that for any $r>2$ and any $1<\a<3$, we have $(3-r)\a-3 < 0$.
}

\medskip

For the third term on the right-hand side of \eqref{pe-3}, by interpolation for any $\th \in (0,1)$,
\ba
\|\uu_\e \otimes \uu_{\e}\|_{L^\frac{3}{3-2\th}(\O_\e)} \leq \|\uu_\e \|_{L^\frac{6}{3-2\th}(\O_\e)}^2  \leq C \| \uu_\e\|_{L^2(\O_\e)}^{2(1-\th)}  \| \uu_\e\|_{L^6(\O_\e)}^{2\th} \leq C \e^{-\th(3-\a)}. \nn
\ea
 Therefore, choosing without loss of generality $\th < \frac34$, 
\ba\label{pe-f2}
\e^\l \left|\langle \uu_{\e}\otimes \uu_{\e}  ,  \nabla  \Phi_\eps \rangle_{\O_\e} \right| &  \leq \e^\l \|\uu_{\e}\|_{L^\frac{6}{3-2\th}(\O_\e)}^{2} \|     \nabla  \Phi_\eps \|_{L^\frac{3}{2\th}(\O_\e)}  \\
&   \leq C \e^{\l -\th(3-\a) } \|  \nabla \Phi_\eps\|_{L^{\frac{3}{2\th}}(\O_\e)} \\
&   \leq C \e^{\l -\th(3-\a) } (1 + \e^\frac{(6\th - 3)\a - 6\th}{3} )\|  \phi\|_{L^{\frac{3}{2\th}}(\O_\e)} \\
& \leq C \e^{\l - \a - \th (5 - 3\a)} \|  \phi\|_{L^{\frac{3}{2\th}}(\O_\e)} .
\ea
Note that always $\frac{3}{2\theta} \geq \frac{3}{2}$, so this is precisely the place and reason why we do not use the pressure extension by duality as done in \cite{All90-1, Lu21}\footnote{\tc Moreover, the nonlinear term in \cite{All90-1} is treated by abstract compactness arguments, which will not yield convergence rates as we show later on.}. Since $\l>\a$, we can always choose $\th>0$ small enough, for example
\ba\label{theta}
\th  =  \min \left\{ \frac{\l - \a}{2 (5-3\a)}, \frac12 \right\} {\tc \text{ if } \a < \frac53, \qquad \th \in (0, \frac34) \text{ arbitrary if } \a \geq \frac53,  }
\ea
 such that  
 \ba\label{theta1}
 \l - \a - \th (5 - 3\a)  \geq   \frac{\l - \a }{2} >0. 
 \ea

\medskip

For the last term in \eqref{pe-3}, we find by applying the Poincar\'e inequality in Lemma~\ref{lem-Poincare} that
\ba\label{pe-f3}
\left|\langle  \mathbf{f} , \Phi_\eps  \rangle_{\O_{\e}} \right|  &  \leq \| \ff \|_{L^{2}(\O)}  \|   \Phi_\eps \|_{L^{2}(\O_\e)}  \\
&   \leq C \| \ff \|_{L^{2}(\O)} \e^{\frac{3-\a}{2} }   \|  \nabla  \Phi_\eps  \|_{L^{2}(\O_\e)}  \\
& \leq C \| \phi\|_{L^2(\O)}. 
\ea

\medskip

Plugging the estimates \eqref{pe-f1}, \eqref{pe-f2}, and \eqref{pe-f3} into \eqref{pe-3} implies for any $\kappa \in (0,1)$
\ba\label{pe-f}
 | \langle p_\e,  \phi   \rangle_{\O_{\e}} |  &  \leq  C \|  \phi \|_{L^{2}(\O_{\e})} + C \e^{\l - \a - \th (5 - 3\a)} \|  \phi\|_{L^{\frac{3}{2\th}}(\O_\e)} \\
 &\qquad + C \e^{(3-2\a)\kappa} \|\phi\|_{L^\frac{2}{1-\kappa}(\O_\e)} + C \e^\frac{(3-2\a)(r-2)}{r} \|\phi\|_{L^{r}(\O_\e)} \mathbf{1}_{r>2} . \ea
Using \cite[Lemma~3.5]{HNO24}, this means we can decompose $p_{\e} $ as 
\ba\label{pe-dec}
& p_{\e} = p_\e^{(1)} +   p_\e^{\rm res}, \qquad  p_\e^{\rm res} =  \e^{\l - \a - \th (5 - 3\a)} p_\e^{(2)} + \e^{(3-2\a)\kappa} p_\e^{(3)} + \e^\frac{(3-2\a)(r-2)}{r} p_\e^{(4)} \mathbf{1}_{r>2} , \\
&  \|  p_\e^{(1)} \|_{L^{2}(\O_{\e})}  + \|  p_\e^{(2)} \|_{L^{\frac{3}{3-2\th}}(\O_{\e})} + \|p_\e^{(3)}\|_{L^\frac{2}{1+\kappa}(\O_\e)} + \|p_\e^{(4)}\|_{L^\frac{r}{r-1}(\O_\e)} \mathbf{1}_{r>2}  \leq C. 
\ea
Here we shall choose $\th>0$ small (see \eqref{theta}) such that \eqref{theta1} is satisfied\footnote{\tc For the Newtonian case $g_r \equiv 0$, it is rather straightforward to get the decomposition \eqref{pe-dec} with $p_\e^{(3)} = p_\e^{(4)}  = 0$, and even also $p_\e^{(2)} =0$ but at the cost of higher $\lambda$ if $\a$ is close to $1$.}.

Let $\tilde p_{\e}$ ($ \tilde  p_\e^{(1)}, \ \tilde p_\e^{\rm res}$) be the zero extension of  $  p_{\e}$ ($  p_\e^{(1)}, \ p_\e^{\rm res}$). Consequently, we may always split $\tilde p_\e = \tilde p_\e^{(1)} + \tilde p_\e^{\rm res}$ with
\ba\label{p-dec-1}
\tilde p_\e^{(1)} &\weak p \text{ weakly in }   L^2(\O),\\
\|\tilde p_\e^{\rm res}\|_{L^q(\O)} & \leq C \e^{\s}\   \mbox{for some $q = q(r, \l, \a, \kappa)>1$, $\s = \s(r, \l, \a, \kappa)>0$}.
\ea

\subsubsection{Evolutionary case}\label{sec:pBdsEvol}
The estimates of the pressure is more delicate in the evolutionary case.  To recover the pressure from the equation, the idea is to integrate the momentum equation in time. Let $\uu_{\e}$ be a finite energy weak solution of \eqref{NonNewTime} in the sense of Definition \ref{def-weak-time}. Introduce
\begin{equation*}
\begin{gathered}
  \mathbf{U}_\e = \int_0^t\uu_\e\, {\rm d}s, \qquad \mathbf{G}_\e=\int_0^t  (\uu_\e\otimes \uu_\e)\, {\rm d}s, \qquad {\tc \mathbf{H}_\e=\int_0^t g_r(|\e^{3-\a} D\uu_\e|) D\uu_\e \, {\rm d}s, } \\
  {\tc P_\e = \int_0^t p_\e \, {\rm d}s, } \qquad \mathbf{F}=\int_0^t\mathbf{f} \, {\rm d}s.
\end{gathered}
\end{equation*}
It follows from \eqref{est-u-f-t} that  $$\mathbf{U}_\e\in C([0,T]; W_{0, \rm div}^{1,2} (\O_\e)), \qquad \mathbf{G}_\e\in C([0,T]; L^{3}(\O_\e)), \qquad \mathbf{F}\in C([0,T]; L^2(\O_\e)), $$ and by \eqref{growthG}, we have
\begin{equation*}
\mathbf{H}_\e\in
\begin{cases}
C([0,T];L^{2}(\O_\e)) & \text{if } 1<r\leq2,\\
C([0,T]; (L^2 + L^{\frac{r}{r-1}})(\O_\e)) & \text{if } r > 2.
\end{cases}
\end{equation*}
Moreover, it follows from \eqref{est-u-f-t-2} that 
\ba
  \tilde{\mathbf{U}}_\e  \weak \mathbf{U} = \int_{0}^{t} \uu \,{\rm d}s  \text{ weakly in } L^2((0,T) \times \O). \nn
\ea

Additionally, we see that for {\tc all $t \in [0,T]$,}
\begin{equation*}
  \nabla P_\e=\mathbf{F}-\e^\l (\uu_\e-\uu_{\e0}) - \e^{\l} \dive \mathbf{G}_\e +\e^{3-\a}\frac{\eta_0}{2}\Delta \mathbf{U}_\e +\e^{3-\a} {\rm div}\,\mathbf{H}_\e \qquad \mbox{in}  \ \mathcal{D}'(\O_{\e}).
\end{equation*}
Exactly along the lines in the previous section, with the estimates \eqref{est-u-f-t0} and \eqref{est-u-f-t} at hand, we can derive the uniform bounds of $P_{\e}$ as
\ba\label{Pe-dec}
& P_{\e} = P_\e^{(1)} + P_\e^{\rm res}, \qquad  P_\e^{\rm res} =  \e^{\l - \a - \th (5 - 3\a)} P_\e^{(2)} + \e^{(3-2\a)\kappa} P_\e^{(3)} + \e^\frac{(3-2\a)(r-2)}{r} P_\e^{(4)} \mathbf{1}_{r>2} , \\
&  \| P_\e^{(1)} \|_{L^\infty(0,T; L^{2}(\O_{\e}))}  + \| P_\e^{(2)} \|_{L^\infty(0,T; L^{\frac{3}{3-2\th}}(\O_{\e}))} \\
&\qquad + \|P_\e^{(3)}\|_{L^\infty(0,T; L^\frac{2}{1+\kappa}(\O_\e))} + \|P_\e^{(4)}\|_{L^\infty(0,T; L^\frac{r}{r-1}(\O_\e))} \mathbf{1}_{r > 2} \leq C. 
\ea
Here in \eqref{Pe-dec}, $\th>0$ is a small number and will be chosen such that $\l - \a - \th (5 - 3\a)>0$, which is always possible due to $\l>\a.$ 

\medskip

 Hence, in any case, we may split the zero extension $\tilde P_\e = \tilde P_\e^{(1)} + \tilde P_\e^{\rm res}$ with
\ba
& \tilde P_\e^{(1)} \weak P \text{ weakly* in }  L^{\infty}(0,T; L^2(\O)),\\
&\|\tilde P_\e^{\rm res}\|_{L^{\infty}(0,T; L^q(\O))}  \leq C \e^{\s}   \   \mbox{for some $q>1, \ \s>0$}. \nn
\ea

\begin{remark}
    Similar decompositions as given in \eqref{pe-dec} and \eqref{Pe-dec} are obtained in \cite[Lemma~3.2]{HNO24}.
\end{remark}

\subsection{Convergence}\label{sec:pConv}
In this section, we will show that the pressure $p_\e$ converges to its counterpart $p$ in Darcy's law \eqref{Darcy}. We will just focus on the steady case since the time-dependent case follows by similar arguments.

\subsubsection{Qualitative proof}\label{sec:pQual}
From \eqref{p-dec-1}, we see that there exists a $p \in L^2(\O)$ such that
\begin{align*}
    \tilde p_\e^{(1)} &\weak p \text{ weakly in } L^2(\O),\\
    \tilde p_\e^{\rm res} &\to 0 \text{ strongly in } L^q(\O) \text{ for some } q>1.
\end{align*}

We will show in the sequel that this $p$ is actually the unique pressure in $L_0^2(\O)$ that solves the Darcy law \eqref{Darcy}.\\

Taking for $\phi \in C_c^\infty(\O; \R^3)$ the test function $W_\e \phi$ in \eqref{pe-1}, we infer
\begin{align}\label{jedna}
    \int_{\O_\e} p_\e \dive (W_\e \phi) \dx = \int_{\O_\e} \e^{3-\a} \eta_r(\e^{3-\a} D\uu_\e)D\uu_\e : \nabla (W_\e \phi) + \e^\l \uu_\e \otimes \uu_\e : \nabla (W_\e \phi) - \ff \cdot W_\e \phi \dx.
\end{align}
The same way as in Section~\ref{sec:statQual}, we deduce that the right hand side of the above equality converges to
\begin{align*}
    \int_\O \frac{\eta_0}{2} M_0 \uu \cdot \phi - \ff \cdot \phi \dx.
\end{align*}
As for the left hand side, we find
\begin{align*}
    \int_{\O_\e} p_\e \dive(W_\e \phi) \dx = \int_\O \tilde p_\e^{(1)} W_\e : \nabla \phi \dx + \int_\O \tilde p_\e^{\rm res} W_\e : \nabla \phi \dx,
\end{align*}
where we used that $\dive W_\e = 0$. Since $W_\e \to \Id$ strongly in $L^2(\O)$, $\|W_\e\|_{L^\infty(\O)} \leq C$, and $\tilde p_\e^{(1)} \weak p$ weakly in $L^2(\O)$ and $\tilde p_\e^{\rm res} \to 0$ strongly in some $L^q(\O)$, we find
\begin{align*}
    \int_\O \tilde p_\e^{(1)} W_\e : \nabla \phi \dx &\to \int_\O p \Id : \nabla \phi \dx = \int_\O p \dive \phi \dx,\\
 \int_\O \tilde p_\e^{\rm res} W_\e : \nabla \phi \dx &\leq C \|\tilde p_\e^{\rm res}\|_{L^q(\O)} \|\nabla \phi\|_{L^{q'}(\O)} \to 0.
\end{align*}

Hence, we get
\begin{align*}
    \int_\O p \dive \phi \dx = \int_\O \frac{\eta_0}{2} M_0 \uu \cdot \phi - \ff \cdot \phi \dx,
\end{align*}
which is the weak formulation of \eqref{Darcy}. Due to uniqueness of the solution to \eqref{Darcy}, we infer that $p = \lim_{\e \to 0} \tilde p_\e$ is the correct Darcy pressure, completing the proof of Theorem~\ref{thm-1}.

\subsubsection{Quantitative proof}\label{sec:pQuan}
Lastly, quantitative convergence rates for the torus case can be obtained as follows. Let $(\uu, p)$ be a strong solution to Darcy's law \eqref{Darcy}. Since $(\uu, p)$ is a strong solution, it satisfies \eqref{Darcy} pointwise in $\O$ and especially pointwise in $\O_\e$. Let $\phi \in C_c^\infty(\O; \R^3)$, use $W_\e \phi$ as test function in \eqref{Darcy}, and subtract the outcome from \eqref{jedna} to get
\ba\label{dvojka}
    \int_{\O_\e} (p_\e - p) \dive (W_\e \phi) \dx &= \int_{\O_\e} \e^{3-\a} \eta_r(\e^{3-\a} D\uu_\e)D\uu_\e : \nabla (W_\e \phi) \dx - \int_{\O_\e} \frac{\eta_0}{2} M_0 \uu \cdot  (W_\e \phi) \dx \\
    &\qquad + \int_{\O_\e} \e^\l \uu_\e \otimes \uu_\e : \nabla (W_\e \phi) \dx \\
    &= \eta_0 \int_{\O_\e} \e^{3-\a} D\uu_\e : \nabla (W_\e \phi) - \frac{\eta_0}{2} M_0 \uu \cdot  (W_\e \phi) \dx \\
    &\qquad + \int_{\O_\e} \e^{3-\a} g_r(|\e^{3-\a} D \uu_\e|) D\uu_\e : \nabla (W_\e \phi) \dx \\
    &\qquad + \int_{\O_\e} \e^\l \uu_\e \otimes \uu_\e : \nabla (W_\e \phi) \dx.
\ea
Arguing as in Section \ref{sec:pBdsStat} and {\tc using the estimate \eqref{nabW}}, the last two integrals in \eqref{dvojka}  give rise to
{\tc 
\ba
    &\int_{\O_\e} \e^{3-\a} g_r(|\e^{3-\a} D \uu_\e|) D\uu_\e : \nabla (W_\e \phi) \dx + \int_{\O_\e} \e^\l \uu_\e \otimes \uu_\e : \nabla (W_\e \phi) \dx \\
    &\lesssim \e^{\frac{3-\a}{2}(1+\kappa)} \|\nabla (W_\e \phi)\|_{L^\frac{2}{1-\kappa}(\O_\e)} + \e^\frac{(3-\a)(r-1)}{r} \|\nabla (W_\e \phi)\|_{L^r(\O_\e)} \mathbf{1}_{r>2} + \e^{\l - \th(3-\a)} \|\nabla (W_\e \phi) \|_{L^\frac{3}{2\th}(\O_\e)} \\
      &\lesssim \e^{\frac{3-\a}{2}(1+\kappa)} (\|\nabla W_\e \|_{L^{\frac{2}{1-\kappa}+}(\O_\e)} \| \phi\|_{L^{\infty-}(\O_\e)} +  \|W_\e \|_{L^{\infty}(\O_\e)} \| \nabla \phi\|_{L^{\frac{2}{1-\kappa}}(\O_\e)}) \\
      & \qquad + \e^\frac{(3-\a)(r-1)}{r} (\|\nabla W_\e \|_{L^{r+}(\O_\e)} \| \phi\|_{L^{\infty-}(\O_\e)} +  \|W_\e \|_{L^{\infty}(\O_\e)} \| \nabla \phi\|_{L^{r}(\O_\e)}) \mathbf{1}_{r>2} \\
      & \qquad + \e^{\l - \th(3-\a)} (\|\nabla W_\e \|_{L^{\frac{3}{2\th}+}(\O_\e)} \| \phi\|_{L^{\infty-}(\O_\e)} +  \|W_\e \|_{L^{\infty}(\O_\e)} \| \nabla \phi\|_{L^{\frac{3}{2\th}}(\O_\e)})  \\
    &\lesssim (\e^{(3-2\a)\kappa - }  + \e^{\frac{(3-2\a)(r-2)}{r}-} \mathbf{1}_{r>2} + \e^{\l - \a - \th(5-3\a)-} )\|\phi\|_{W^{1,\infty-}(\Omega)}\\
    & \lesssim (\e^{(3-2\a)-} + \e^{\frac{(3-2\a)(r-2)}{r}-} \mathbf{1}_{r>2} + \e^{(\l - \a)-}) \|\phi\|_{W^{1,\infty-}(\Omega)},
\nn \ea
}
{\tc  where we have chosen $\th$ sufficiently close to $0$ and $\kappa$ sufficiently close to $1$. Moreover, $s+$ and $s-$ denote numbers arbitrarily close to, but strictly greater than and less than $s$, respectively. (More precisely, there is an implicit dependence between different exponents $a+$ and $s-$, which we choose not to make explicit for notational convenience. However, for any $\delta > 0$, it will always be possible to choose any $a+ \in (a,a+\delta)$ and any $s- \in (s - \delta,s)$.)}

We rewrite the first integral on the right-hand side of \eqref{dvojka}  as
\begin{align*}
    &\eta_0 \int_{\O_\e} \e^{3-\a} D\uu_\e : \nabla (W_\e \phi) - \frac{\eta_0}{2} M_0 \uu \cdot  (W_\e \phi) \dx \\
    &= \eta_0 \int_{\O_\e} \e^{3-\a} D\uu_\e : \nabla (W_\e \phi) - \frac{\eta_0}{2} \uu_\e \cdot (M_0 \phi) \dx + \frac{\eta_0}{2} \int_{\O_\e} (\uu_\e - \uu) \cdot (M_0 \phi) \dx \\
    &\qquad + \frac{\eta_0}{2} \int_{\O_\e} M_0 \uu \cdot (\Id - W_\e) \phi \dx
\end{align*}
to see that, similarly to \eqref{limit-7} and using also \eqref{ConvSpeedS},
{\tc \begin{align*}
    &\eta_0 \int_{\O_\e} \e^{3-\a} D\uu_\e : \nabla (W_\e \phi) - \frac{\eta_0}{2} M_0 \uu \cdot  (W_\e \phi) \dx \\
    & = \frac{\eta_0}{2} \int_{\O_\e} \big[\e^{3-\a} \big( \nabla W_\e :\nabla (\uu_\e \phi) - Q_\e \dive (\uu_\e \phi) \big) -  M_0  (\uu_\e \phi)  \big]\dx   \\
    & \qquad + \frac{\eta_0}{2} \int_{\O_\e}  \e^{3-\a}  \big[\nabla \uu_\e :(W_\e \otimes \nabla \phi) -\nabla W_\e :(\uu_\e \otimes \nabla \phi)  +  Q_\e (\uu_\e \cdot \nabla \phi) \big] \dx \\
    & \qquad  + \frac{\eta_0}{2} \int_{\O_\e} (\uu_\e - \uu) \cdot (M_0 \phi) \dx + \frac{\eta_0}{2} \int_{\O_\e} M_0 \uu \cdot (\Id - W_\e) \phi \dx\\ 
    &\lesssim \|\e^{3-\a} (- \Delta W_\e + \nabla Q_\e) - M_0\|_{W^{-1,2}(\O_\e)} \|\uu_\e \phi\|_{W^{1,2}(\O_\e)} \\
    & \qquad +  \e^{3-\a} \big(\| \nabla \uu_\e\|_{L^2(\Omega_\e)} \|W_\e\|_{L^\infty(\Omega_\e)}\|\nabla \phi\|_{L^2(\Omega_\e)} \big) \\
    & \qquad + \e^{3-\a} \big((\| \nabla W_\e\|_{L^{2+}(\Omega_\e)} + \|Q_\e\|_{L^{2+}(\Omega_\e)}  ) \| \uu_\e\|_{L^{2}(\Omega_\e)}\| \nabla \phi\|_{L^{\infty-}(\Omega_\e)}    \big)\\
    &\qquad + \|\uu_\e - \uu\|_{L^2(\O_\e)}\|\phi\|_{L^2(\O_\e)} + \|\Id - W_\e\|_{L^2(\O_\e)} \|\phi\|_{L^2(\O_\e)}  \|\uu\|_{L^\infty(\O_\e)}  \\
    &\lesssim \e \big( \|\uu_\e\|_{L^6(\Omega_\e) }\|\nabla \phi\|_{L^3(\Omega_\e)} + \|\nabla \uu_\e\|_{L^2(\Omega_\e) }\|\phi\|_{L^\infty(\Omega_\e)} \big) \\
    &\qquad + \e^{\frac{3-\a}{2}-} \|\phi\|_{W^{1, \infty-}(\Omega)} + (\e^\frac{\a - 1}{2} + \e^\frac{3-\a}{2} + \e^{3-2\a})  \|\phi\|_{L^2(\O_\e)}\\
    &\lesssim \big(\e^\frac{\a - 1}{2} + \e^{\frac{3-\a}{2}-} + \e^{3-2\a}\big)\|\phi\|_{W^{1, \infty-}(\Omega)}.
\end{align*}
}

Finally, we rewrite
\begin{align*}
    \int_{\O_\e} (p_\e - p) \dive(W_\e \phi) \dx &= \int_\O (\tilde p_\e - p) W_\e : \nabla \phi \dx \\
    &=\int_\O (\tilde p_\e - p) \Id :\nabla \phi \dx + \int_\O \tilde p_\e (W_\e - \Id) : \nabla \phi \dx \\
    &\qquad + \int_\O p (\Id - W_\e) : \nabla \phi \dx.
\end{align*}

{\tc By \eqref{estW-I}, together with the estimate for $\tilde p_\e$ in \eqref{pe-dec} with $\th$ sufficiently close to $0$ and $\kappa$ sufficiently close to $1$,  we infer
\ba
&    \int_\O \tilde p_\e (W_\e - \Id) : \nabla \phi \dx \\
&\quad \lesssim \|\tilde p_\e^{(1)}\|_{L^2(\O_\e)} \|W_\e - \Id\|_{L^{2+}(\O_\e)} \|\nabla \phi\|_{L^{\infty-}(\Omega_\e)}  \\
    & \qquad + \left( \e^{(\l - \a)- } \|\tilde p_\e^{(2)}\|_{L^{1+}(\O_\e)} + \e^{(3-2\a)- }\|\tilde p_\e^{(3)}\|_{L^{1+}(\O_\e)}   \right) \|W_\e - \Id\|_{L^\infty(\O_\e)} \|\nabla \phi\|_{L^{\infty-}(\Omega_\e)}  \\
    & \qquad +   \e^\frac{(3-2\a)(r-2)}{r} \|p_\e^{(4)}\|_{L^\frac{r}{r-1}(\O_\e)}  \mathbf{1}_{r>2}  \|W_\e - \Id\|_{L^{r+}(\O_\e)} \|\nabla \phi\|_{L^{\infty-}(\Omega_\e)}  \\
    & \quad  \lesssim \big(\e^{\a - 1 } +  \e^{(\l - \a)-} + \e^{(3-2\a)-} + \e^{\frac{(3-2\a)(r-2)}{r} + \min\{1, \frac{3}{r}\} -} \mathbf{1}_{r >2}\big) \|\nabla \phi\|_{L^{\infty-}(\Omega_\e)},\\
&    \int_\O p (\Id - W_\e) : \nabla \phi \dx  \lesssim  \|p\|_{L^2(\O_\e)}  \|\Id - W_\e\|_{L^{2+}(\O_\e)}  \|\nabla \phi\|_{L^{\infty-}(\Omega_\e)} \lesssim \e^{\a-1}  \|\nabla \phi\|_{L^{\infty-}(\Omega_\e)} .
\ea

Observing for any $1<\a<3$ and any $r>2$ that
$$
\frac{(3-2\a)(r-2)}{r} + \min \left\{1, \frac{3}{r} \right\}> 3 - 2 \a,
$$
we collect the estimates above to conclude
\begin{align*}
    \int_\O (\tilde p_\e - p) \dive \phi \dx \lesssim \big(\e^\frac{\a - 1}{2} + \e^{\frac{3-\a}{2}-} + \e^{(\l - \a)- } + \e^{(3-2\a)- } + \e^{\frac{(3-2\a)(r-2)}{r}-} \mathbf{1}_{r>2}  \big) \|\phi\|_{W^{1, \infty-}(\Omega)}.
\end{align*}

Since $\Omega$ is the torus, for any $f \in L_0^q(\O)$ with $1<q<\infty$,  there exists $\phi \in W^{1,q}(\O; \R^3)$ such that
\begin{align*}
    \dive \phi = f, \qquad \|\phi\|_{W^{1,q}(\O)} \leq C \|f\|_{L^q(\O)}.
\end{align*}
Consequently, using also that $\tilde p_\e$ and $p$ have mean value zero, we find 
\begin{align*}
    \|\tilde p_\e - p\|_{L^{1}(\O)} \lesssim  \|\tilde p_\e - p\|_{L^{1+}(\O)} & = \sup \left\{ \int_\O (\tilde p_\e - p) f \dx, \ f\in L_0^{\infty-}(\Omega), \ \|f\|_{ L^{\infty-}(\O)}=1 \right\} \\
    &\lesssim  (\e^\frac{\a - 1}{2} + \e^{\frac{3-\a}{2}-} + \e^{(\l - \a)- } + \e^{(3-2\a)- } + \e^{\frac{(3-2\a)(r-2)}{r}-} \mathbf{1}_{r>2}  ).
\end{align*}
}

\begin{remark}
    Time-dependent estimates follow similarly to the convergence proof for the velocity. Integrating the equations in time only and using the convergences for the velocity already obtained, once we noticed that $\d_t : L^2(0,T;L^{1}(\O)) \to W^{-1,2}(0,T; L^{1}(\O))$ is a continuous operator, we conclude
    \begin{align*}
        \|\tilde p_\e - p\|_{W^{-1,2}(0,T; L^{1}(\O))} \leq C (\e^\frac{\a - 1}{2} + \e^{\frac{3-\a}{2}-} + \e^{(\l - \a)- } + \e^{(3-2\a)- }  + \e^{\frac{(3-2\a)(r-2)}{r}-} \mathbf{1}_{r>2} ).
    \end{align*}
\end{remark}

%

\appendix

\section{Proof of Proposition~\ref{lem-local-1}}
In \cite{All90-1, All90-2}, Allaire employed the following problem of Stokes equations in exterior domain $\R^3 \setminus \mathcal{T}$, called the {\em  local problem}:
\be\label{pb-local}
\begin{cases}
-\Delta \vv^i + \nabla q^i =  0 & \text{in } \R^3 \setminus  \mathcal{T},\\
\dive \vv^i =0 & \text{in } \R^3 \setminus  \mathcal{T},\\
\vv^i =0 & \text{on } \mathcal{T},\\
\vv^i = \vc e_i, & \text{at infinity},
\end{cases}
\ee
to construct a family of functions $(\vv_\e^i, q_\e^i) \in W^{1,2}(\O_{\e};\R^{3})\times L_0^{2}(\O_{\e})$ which vanish on the holes in order to modify the $C_{c}^{\infty}(\O)$ test functions and derive the limit equations as $\e\to 0$. Here, $\{\vc e_i\}_{i=1,2,3}$ is the standard Euclidean basis of $\R^3$. Allaire showed that the Dirichlet problem \eqref{pb-local} is well-posed in $D^{1,2}(\R^{3}\setminus \mathcal{T};\R^{3}) \times L_0^{2}(\R^{3}\setminus \mathcal{T})$ and showed some decay estimates of the solutions at infinity, where $D^{1,2}$ denotes the homogeneous Sobolev space. The corresponding functions $(\vv_\e^i, q_\e^i)$ are defined as follows: in cubes $\e Q_{k}$ that intersect with the boundary of $\O$,
\ba\label{def-vi-pi-1}
\vv_{\e}^{i} = \vc e_i, \ q_\e^i = 0, \ \mbox{in $\e Q_{k} \cap \O$}, \qquad \mbox{if} \ \e Q_{k} \cap \d \O \neq \emptyset;
\ea
and in cubes $\e Q_{k}$ whose closures are contained in $\O$,
\ba\label{def-vi-pi-2}
\vv_{\e}^{i} &= \vc e_i, \qquad  q_\e^i = 0, \ &&\mbox{in $\e Q_{k} \setminus  B(\e x_{k},  \frac{\e}{2})$}, \\
-\Delta \vv_{\e}^{i} +  \nabla q_\e^i &= 0, \qquad \dive \vv_{\e}^{i} = 0,\ &&\mbox{in $ B(\e x_{k},  \frac{\e}{2}) \setminus  B(\e x_{k}, \frac{\e}{4})$}, \\
\vv_{\e}^{i} (x) &=  \vv^{i} \bigg(\frac{x - \e x_k}{\e^{\a}}\bigg), \qquad q_\e^i (x) = \frac{1}{\e^{\a}} q^{i}\bigg (\frac{x - \e x_k}{\e^{\a}}\bigg),\ &&\mbox{in $ B(\e x_{k}, \frac{\e}{4}) \setminus  \mathcal{T}_{\e,k}$}, \\
\vv_{\e}^{i} &= 0, \qquad  q_\e^i = 0, \ &&\mbox{in $\mathcal{T}_{\e,k}$},
\ea
together with matching (Dirichlet) boundary conditions. {\tc Finally, the matrix $W_\e$ and the vector $Q_\e$ are defined as $W_\e = (\vv_\e^1, \vv_\e^2, \vv_\e^3)$ and $Q_\e = (q_\e^1, q_\e^2, q_\e^3)^{\rm T}$.
}

Given the functions $(\vv_\e^i, q_\e^i)$ constructed as above, the first three statements of Proposition~\ref{lem-local-1} are already proven in \cite[Proposition~3.4.12]{All90-2}, and \eqref{nabW} is proven in \cite[Lemma~3.2]{HKS21}. {\tc As a matter of fact, given the functions $\vv^i$ from problem \eqref{pb-local}, the matrix $M_0$ can be written as
\begin{align*}
    (M_0)_{ij} = \int_{\R^3 \setminus \mathcal{T}} \nabla \vv^i : \nabla \vv^j \dx,
\end{align*}
showing that $M_0$ is indeed a positive definite symmetric matrix.} Hence, we will just focus on the {\tc proofs of the remaining statements. Especially \eqref{estW-I}, meaning the estimate of $W_\e - \Id$, is of interest for the following reason:} for $q=2$, this estimate is content of \cite[Equation (3.4.35)]{All90-2} as the English translation from the original work \cite{All89PhD}, however, we want to note that the error estimates given in \cite{All89PhD, All90-2} differ from each other: indeed, equation (IV.2.4) in \cite{All89PhD} tells
\ba
\|\vv_\e^i - \vc e_i\|_{L^2(\O)}^2 \leq C (\e \sigma_\e^{-1})^2 = C \e^{\a - 1}, \nn
\ea
whereas the page before claims
\ba
\|\vv_\e^i - \vc e_i\|_{L^2(\O)}^2 \leq C [ (a_\e \e^{-1})^3 + (\e \sigma_\e^{-1})^4 ] = C [ \e^{3(\a - 1)} + \e^{2(\a - 1)} ]. \nn
\ea
On the other hand, \cite[Equation (3.4.35)]{All90-2} tells
\ba
\|\vv_\e^i - \vc e_i\|_{L^2(\O)} \leq C (\e \sigma_\e^{-1})^2 = C \e^{\a - 1}, \nn
\ea
from which the author deduces equation (3.4.49) as
\ba
\|W_\e - \Id \|_{L^2(\O)} \leq C \e \sigma_\e^{-1} = C \e^\frac{\a - 1}{2}. \nn 
\ea
Hence, we want to give here the corrected estimates, and also generalize them to all $q \in [1, \infty]$.\\

\begin{proof}[\tc Proof of \eqref{estW-I}]
First, we focus on the region $B(\e x_k, \frac{\e}{2}) \setminus B(\e x_k, \frac{\e}{4})$. There, we have
\begin{align*}
\begin{cases}
-\Delta \vv_\e^i + \nabla q_\e^i = 0, \ \dive \vv_\e^i = 0 & \text{in } B(\e x_k, \frac{\e}{2}) \setminus B(\e x_k, \frac{\e}{4}),\\
\vv_\e^i(x) = \vv^i((x-\e x_k)/\e^\a) & \text{on } \d B(\e x_k, \frac{\e}{4}),\\
\vv_\e^i = \vc e_i & \text{on } \d B(\e x_k, \frac{\e}{2}).\\
\end{cases}
\end{align*}

Setting $\vc w_\e^i(x) = \vv_\e^i(\e x + \e x_k) - \vc e_i$ and $p_\e^i(x) = \e q_\e^i(\e x + \e x_k)$, we see
\begin{align*}
\begin{cases}
-\Delta \vc w_\e^i + \nabla p_\e^i = 0, \ \dive \vc w_\e^i = 0 & \text{in } B(0, \frac{1}{2}) \setminus B(0, \frac{1}{4}),\\
\vc w_\e^i(x) = \vv^i(\e^{1-\a} x) - \vc e_i & \text{on } \d B(0, \frac{1}{4}),\\
\vc w_\e^i = 0 & \text{on } \d B(0, \frac{1}{2}).\\
\end{cases}
\end{align*}

By the pointwise expansion of $\vv^i$ given in \cite[Equation (2.3.25)]{All90-1} and in \cite[Equation (A.8)]{All90-1} , we have at infinity
\begin{align*}
\vv^i(x) = \vc e_i + \mathcal{O}(|x|^{-1}), \qquad
\nabla \vv^i(x) = \mathcal{O}(|x|^{-2}), \qquad \nabla^{2} \vv^i(x) = \mathcal{O}(|x|^{-3}).
\end{align*}
In turn, we have on $\d B(0, \frac14)$
\begin{align*}
\nabla^{l} \vc w_\e^i = \nabla^{l}(\vv^i(\e^{1-\a} x))= \mathcal{O}(\e^{l(1-\a)} \e^{(1+l)(\a-1)}) = \mathcal{O}(\e^{\a-1}), \ {\tc l \in \{0, 1, 2\}},
\end{align*}
{\tc in the sense of $L^\infty(\d B(0, \frac14))$ norm, which leads to}
\begin{align*}
 \|\vc w_\e^i\|_{W^{2,q}(B(0, \frac12) \setminus B(0, \frac14))} + \|p_\e^i\|_{W^{1,q}(B(0, \frac12) \setminus B(0, \frac14))} \leq C \e^{\a-1}, \ \forall\, q\in (1, \infty),
\end{align*}
and, by Sobolev embedding,
\begin{align*}
 \|\vc w_\e^i\|_{W^{1,\infty}(B(0, \frac12) \setminus B(0, \frac14))} + \|p_\e^i\|_{L^{\infty}(B(0, \frac12) \setminus B(0, \frac14))} \leq C \e^{\a-1}.
\end{align*}

Back to $\vv_\e^i$, this yields 
\begin{align}\label{vve1}
\begin{split}
\|\vv_\e^i - \vc e_i\|_{L^q(B(\e x_k, \frac{\e}{2}) \setminus B(\e x_k, \frac{\e}{4}))}^q &\leq C \e^3 \e^{q(\a-1)}, \  q\in (1, \infty ), \\
\|\vv_\e^i - \vc e_i\|_{L^\infty(B(\e x_k, \frac{\e}{2}) \setminus B(\e x_k, \frac{\e}{4}))} &\leq C \e^{\a-1}.
\end{split}
\end{align}

Next, we focus on the region $B(\e x_k, \frac{\e}{4}) \setminus \mathcal{T}_{\e, k}$. By definition of $\vv_\e^i$ and $q_\e^i$, it is easy to obtain that 
\ba\label{vve2}
\|\vv_\e^i - \vc e_i \|_{L^{q}(B(\e x_k, \frac{\e}{4}) \setminus \mathcal{T}_{\e, k})}^q &\leq C \e^{3\a} \| \vv^i - \vc e_i \|_{L^q(B(0, \frac{\e^{1-\a}}{4}) \setminus \mathcal{T})}^q \leq C \e^{3 \a}, \ \forall\, q\in (3, \infty),
\ea
and
\ba\label{vve3}
\|\vv_\e^i - \vc e_i \|_{L^\infty(B(\e x_k, \frac{\e}{4}) \setminus \mathcal{T}_{\e, k})} &\leq C \| \vv^i - \vc e_i \|_{L^\infty (B(0, \frac{\e^{1-\a}}{4}) \setminus \mathcal{T})} \leq C.
\ea
Here, the restriction $q>3$ is to ensure the integrability of $|\vv^{i} - \vc e_i|^{q}$ in $\R^{3} \setminus \mathcal{T}$.\\

Consequently, putting together \eqref{vve1}--\eqref{vve3} and taking into account that the number of holes inside $\O$ is of order $\e^{-3}$, we find for any $q \in (3, \infty)$ that 
\ba\label{est-vi-all}
\|\vv_\e^i - \vc e_i\|_{L^q(\O)}^q &\leq C \e^{-3} (\e^3 \e^{q(\a-1)} + \e^{3\a}) \leq C \e^{3(\a-1)}.
\ea
The estimates of $\|\vv_\e^i - \vc e_i\|_{L^q(\O)}$ with $q\in [1,3]$ {\tc follow similar lines}. The estimates for the pressure $q_{\e}^{i}$ can be derived similarly.
\end{proof}

{\tc

\begin{proof}[Proof of \eqref{W_O_varpi}] {\tc Recall the definition of $\O_\e$ from \eqref{Omega-e}, and define
$$
K_\e': = \{ k \in \Z^3:  \e Q_k \cap \d \Omega \neq \emptyset\}, \quad K_\e^\varpi :=   \{ k \in K_\e : \e Q_k \cap \O^\varpi \neq \emptyset \}.
$$
Then, for $\e>0$ small enough, and that for $\varpi \geq \e$, we have
$$
\# K_\e'  \lesssim \e^{-2}, \quad  \# K_\e^\varpi  \lesssim  \varpi \e^{-3},
$$
where $\# K$ denotes the cardinality of set $K$. Using the fact that $W_\e$ is $\e Q_0$ periodic in $\R^3$ gives
\ba
\|\nabla W_\e\|_{L^q(\O^\varpi)}^q & = \sum_{k \in K_\e^\varpi} \int_{\e Q_k \cap \O^\varpi} |\nabla W_\e|^q \dx +  \sum_{k \in K_\e'} \int_{\e Q_k \cap   \O^\varpi} |\nabla W_\e|^q \dx \\ 
& \leq \sum_{k \in K_\e^\varpi} \int_{\e Q_k} |\nabla W_\e|^q \dx + \sum_{k \in K_\e'} \int_{\e Q_k} |\nabla W_\e|^q \dx \\
&= (\# K_\e^\varpi )  \int_{\e Q_0} |\nabla W_\e|^q \dx + ( \# K_\e') \int_{\e Q_0} |\nabla W_\e|^q \dx  \\
& \lesssim (\varpi + \e) \e^{-3} \int_{\e Q_0} |\nabla W_\e|^q \dx \\ 
& \lesssim \varpi  \e^{-3} \int_{\e Q_0} |\nabla W_\e|^q \dx.
\nn
\ea
Together with the observation
\ba
\|\nabla W_\e\|_{L^q(\Omega)}^q & \geq  \sum_{k \in K_\e } \int_{\e Q_k } |\nabla W_\e|^q \dx  \geq c \e^{-3}  \int_{\e Q_0} |\nabla W_\e|^q  \dx,
\nn
\ea
we thus conclude that
\begin{align*}
\|\nabla W_\e\|_{L^q(\O^\varpi)}^q  \lesssim \varpi \|\nabla W_\e\|_{L^q(\Omega)}^q.
\end{align*}

Analogously, we have
\begin{align*}
\| W_\e - \Id\|_{L^q(\O^\varpi)}^q  \lesssim \varpi \|W_\e - \Id\|_{L^q(\Omega)}^q.
\end{align*}
}
\end{proof}
}

{\tc 
\begin{proof}[Proof of \eqref{We-Qe-eq}]
    The last conclusion \eqref{We-Qe-eq} has been shown in \cite{All90-1, All90-2}. More precisely, one can decompose $(-\Delta W_\e + \nabla Q_\e) = M_\e + \gamma_\e$, where $\gamma_\e$ is supported on $\partial \Omega_\e$, hence $-\Delta W_\e + \nabla Q_\e = M_\e$ in $W^{-1,2}(
\O_\e)$, see \cite[Eq. (3.4.36)]{All90-2}. Estimate $\eqref{We-Qe-eq}$ is then \cite[Eq. (3.4.40)]{All90-2}, which follows from  \cite[Lemma~2.4.3]{All90-1}.
\end{proof}
} The proof of Proposition~\ref{lem-local-1} is thus completed. 

\section*{Acknowledgements}
{\it Y. L. has been supported by the Recruitment Program of Global Experts of China. Y. L. is partially supported  by NSF of Jiangsu Province under Grant BK20240058 and by NSF of China under Grant 12171235.   F. O. has been supported by the Czech Academy of Sciences project L100192351. The Institute of Mathematics, CAS is supported by RVO:67985840.}


\end{document}